\title{Phase Transition for Potentials of High-Dimensional Wells with a Mass-Type Constraint }
\author[a]{Xingyu Wang}
\author[a]{Yaguang Wang}
\affil[a]{School of Mathematical Sciences, Shanghai Jiao Tong University, Shanghai, China}
\numberwithin {equation} {section}
\newtheorem{theorem}{Theorem}
\newtheorem{lemma}{Lemma}[section] 
\newtheorem{definition}{Definition}
\newtheorem{proposition}{Proposition}[section] 
\theoremstyle{remark} 
\newtheorem{remark}{Remark}[section]
\newtheorem{claim}{Claim} 
\newtheorem*{claim*}{Claim}
\begin{document}

\maketitle
\begin{abstract}
 Inspired by Lin-Pan-Wang (Comm. Pure Appl. Math., 65(6): 833-888, 2012), we continue to study the corresponding time-independent case of the Keller-Rubinstein-Sternberg problem. To be precise, we explore the asymptotic behavior of minimizers as $\varepsilon\to0$, for the functional
 $$\mathbf{E}_\varepsilon(u)= 
\int_{\Omega}\left(|\nabla u|^{2}+\frac{1}{\varepsilon^{2}} F(u)\right) d x$$under a mass-type constraint $\int_{\Omega}\rho(u)\, dx=m$, where $\rho:\mathbb{R}^k \to \mathbb{R}\in Lip(\mathbb{R}^k)$ is specialized as a density function with $m$ representing a fixed total mass. The potential function $F$ vanishes on two disjoint, compact, connected, smooth Riemannian submanifolds $N^{\pm}\subset\mathbb{R}^k$. We analyze the expansion of $\mathbf{E}_\varepsilon(u_\varepsilon)$ for various density functions $\rho$, identifying the leading-order term in the asymptotic expansion, which depends on the geometry of the domain and the energy of minimal connecting orbits between $N^+$ and $N^-$. Furthermore, we estimate the higher-order term under different geometric assumptions and characterize the convergence $u_{\varepsilon_i}\to v
$ in the ${L}^1$ sense.
\end{abstract}
\newpage
\tableofcontents
\newpage
\section{Introduction}
\subsection{Background and related results}
The theory of singular perturbation for scalar-valued phase transition problems has aroused great interest both in analysis and computations, owing to its important applications to many subjects in sciences. We consider a fluid system with free energy $F(u)$ per unit mass within a smooth bounded region $\Omega\subset\mathbb{R}^n$, where $u(x)$, for $x\in\Omega$, represents the density distribution of the fluid. The classical energy model is given by the Cahn-Hilliard energy functional \cite{gurtin1985theory}
\begin{equation}
\int_{\Omega}\left(|\nabla u|^{2}+\frac{1}{\varepsilon^{2}}F(u)\right) d x, \label{1.1.1}
\end{equation}
where $F:\mathbb{R}\to\mathbb{R}_+$ is a double-well potential function vanishing at $\pm1$. The term $|\nabla u|^2$ represents the interfacial energy (see \cite{gurtin1985theory}). If the fluid in $\Omega$ has a fixed mass $m$, the density distributions $u$ must satisfy the mass constraint, 
\begin{equation}
 \int_{\Omega}u\, dx=m.\label{1.1.2}
\end{equation}
The asymptotic behavior of minimizers $u_\varepsilon$ of the above Cahn-Hilliard energy functional under the mass constraint (\ref{1.1.2}), as $\varepsilon\to0$, was first studied by Modica and Mortola \cite{modica1977limite}, Modica \cite{modica1987gradient} and later by Luckhaus and Modica \cite{luckhaus1989gibbs}. They showed that the separation region between the two stable phases has thickness $O(\varepsilon)$, and the phase transition converges to a minimal hypersurface. Subsequent work by Kohn and Sternberg \cite{kohn1989local, sternberg1988effect} as well as Modica and Mortola \cite{modica1977limite} extended their results within the framework of De Giorgi’s $\Gamma$-convergence. Later studies have explored a broader class of potential functions $F$, revealing various behaviors under different constraints. We now give a review for different potentials $F$:

For potentials of one-dimensional wells $F:\mathbb{R}^k\to \mathbb{R}_+$, vanishing at two points $\{p, q\}\subset\mathbb{R}^k$. For $k=1$, Leoni, Dal Maso, and Fonseca (\cite{leoni2016second, dal2015second}) provided a second-order asymptotic expansion by $\Gamma$-convergence of (\ref{1.1.1}) under the mass constraint $\int_{\Omega}u\, dx=m$. For a rigorous analysis of (\ref{1.1.1}) with Dirichlet conditions, see the works of Burger and Fraekel \cite{berger1970asymptotic}, Caginalp \cite{caginalp1986analysis}, Howes \cite{howes1983perturbed}. Specifically, when $n=1$ and $\Omega\subset\mathbb{R}$ represents an interval, there are several interesting studies; see, for example, \cite{anzellotti1993asymptotic, bellettini2015gamma, carr1984structured}. For $k\geq2$, Fonseca and Tartar \cite{fonseca1989gradient} provided some results.

For potentials of two-dimensional wells $F:\mathbb{R}^2\to \mathbb{R}_+$, vanishing on two smooth and simple closed curves, Nelly-Shafrir \cite{ andre2011minimization, andre2014minimization} and Sternberg \cite{sternberg1991vector} gave the asymptotic analysis for (\ref{1.1.1}) under the mass-type constraint $\int_\Omega|u|dx=m$.

For potentials of high-dimensional wells $F:\mathbb{R}^k\to \mathbb{R}_+$, vanishing on two disjoint, compact, connected, submanifolds $N^{\pm}$ in $\mathbb{R}^k$. Only Lin-Pan-Wang in \cite{lin2012phase}, rigorously justified the formal asymptotic analysis under a suitable Dirichlet boundary condition. To be precise, they established the interface limit of (\ref{1.1.1}), derived a non-standard boundary condition (holds trivially in the case of the scalar Allen-Cahn equation), which essentially generalizes previous results to vectorial cases. Lin-Wang in \cite{lin2023isotropic} studied the phase transition between the isotropic and nematic states of the liquid crystal based on the Ericksen model.

To model certain chemical reaction processes, Rubinstein, Sternberg, and Keller \cite{ rubinstein1989reaction,rubinstein1989fast} introduced a vector-valued system of fast reaction and slow diffusion:
\begin{equation}
 \partial u=\varepsilon\Delta u-\varepsilon^{-1}F_u(u)\quad\text{ in } \Omega, \label{allen}
\end{equation}
where the vector $u$ belongs to $\mathbb{R}^k$, $F_u(u)$ determines the rate of reaction, and the term $\Delta u$ describes the effect of diffusion. The parameter $\varepsilon$ quantifies the relative importance of reaction versus diffusion processes. This system of equations has a variational structure, offering valuable insight into its dynamics. Specifically, the equation (\ref{allen}) can be regarded as the gradient flow of the functional, 
\begin{equation}
\int_{\Omega}\left(\varepsilon|\nabla u|^{2}+\frac{1}{\varepsilon^{}} F(u)\right) d x.\label{functional}
\end{equation}According to the law of mass action during the chemical reaction process, the potential function $F: \mathbb{R}^k\to \mathbb{R}_+$ can vanish on two disjoint submanifolds in $\mathbb{R}^k$. In such cases, a front develops in $\Omega$ and moves by its mean curvature (see, \cite{ rubinstein1989reaction,rubinstein1989fast}).
Extensive research has focused on a rigorous analysis for the scalar case $k=1$ (see, for example, \cite{bronsard1991motion, chen1992generation, dong2021gradient, evans1992phase}). For $k\geq2$, Fei-Wang-Zhang-Zhang \cite{fei2015dynamics, fei2018isotropic} investigated this problem within the liquid crystal framework (isotropic-nematic) under periodic boundary conditions. Furthermore, Fei-Lin-Wang-Zhang \cite{fei2023matrix} solved the Keller-Rubinstein-Sternberg problem in the $O(n)$ setting with periodic boundary conditions. However, generalization to vector-valued systems remains an open problem, particularly when boundary effects are included. For a rigorous analysis for the scalar case $k=1$ with boundary effects, refer to \cite{abels2019convergence, abels2022convergence, moser2023convergence}.


In summary, considerations from both dynamic and variational perspectives drive the study of the time-independent case of the Keller-Rubinstein-Sternberg problem. In this work, we investigate the asymptotic behavior, as $\varepsilon\to0$, of the minimizers $u_\varepsilon$ of (\ref{functional}) under the constraint
$$\int_{\Omega}\rho(u_\varepsilon)\, dx=m. $$
Here, $\rho:\mathbb{R}^k \to \mathbb{R}\in Lip(\mathbb{R}^k)$ is a suitably specialized density function, and $m$ denotes a fixed constant (see (\ref{mass}) below). 
\subsection{The description of the problem \label{subsetion1.2}}
Let us describe our problem. For the sake of generality, we will follow the settings described in \cite{lin2012phase}. For $k \geq 1$, let
$$
N=N^{+} \cup N^{-} \subset \mathbb{R}^{k}
$$
be the union of two disjoint, compact, connected, smooth Riemannian submanifolds $N^{\pm} \subset\mathbb{R}^{k}$ without boundaries. 

Define
\begin{equation}
 d_{N}=\inf \left\{\left|p^{+}-p^{-}\right|: p^{ \pm} \in N^{ \pm}\right\} \quad\left(>2 \delta_{N}\right)\label{d_N define}
\end{equation}
as the Euclidean distance between $N^{+}$and $N^{-}$. Let
\begin{align}
& M^{+}=\left\{p^{+} \in N^{+}: \exists p^{-} \in N^{-} \text { s.t. }\left|p^{+}-p^{-}\right|=d_{N}\right\} \\
& M^{-}=\left\{q^{-} \in N^{-}: \exists q^{+} \in N^{+} \text { s.t. }\left|q^{+}-q^{-}\right|=d_{N}\right\}\label{1.1.4}
\end{align}
be the pair of sets of points in $N^{ \pm}$ achieving $d_{N}$. For $\delta>0$, let
$$
N_{\delta} \equiv\left\{p \in \mathbb{R}^{k}: d(p, N) \equiv \inf _{y \in N}|p-y| \leq \delta\right\}
$$
denote the $\delta$-neighborhood of $N$. It is well-known there exists $\delta_{N}>0$ depending only on $N$, such that
\begin{align*}
 &(1)\text{ the nearest-point projection map } \Pi: N_{\delta_{N}} \to N \text{ is smooth, and }\\
 &(2) \text{ } d^{2}(p, N)=|p-\Pi(p)|^{2} \in C^{\infty}\left(N_{\delta_{N}}\right).
\end{align*}

Throughout, we assume $\Omega\subset\mathbb{R}^n,n\geq1$ is a bounded and smooth region. Without loss of generality, we also assume $\Omega$ has unit mass, that is
\begin{equation*}
\mu(\Omega)=1, 
\end{equation*}
where $\mu$ denotes the $n$-dimensional Lebesgue measure.

Now, let us introduce the mass-type constraint. Let the density function $\rho:\mathbb{R}^k \to \mathbb{R}\in Lip(\mathbb{R}^k)$ satisfy the following conditions:
\begin{equation}
 \rho(N^-)=[a, b], \quad \rho(N^+)=[c, d], \label{classify}
\end{equation}
where $a, b, c, d\in\mathbb{R}$. We allow $a=b$ or $c=d$, but require $b<c$. For $u\in H^1$, a natural extension of the classical mass constraint (\ref{1.1.2}) is
\begin{equation}
 \int_{\Omega}\rho(u)dx=m, \label{mass}
\end{equation}
where $m\in(b, c)$, to enforce the desired phase transition. In fact, when $a\leq m\leq b$ or $c\leq m\leq d$, the energy (\ref{functional}) achieves its minimum value of zero under a constant mapping. For $m< a$ or $m>d$, the energy (\ref{functional}) diverges to infinity as $\varepsilon\to0$, since $\mu\{u\notin N\}>0$ holds for all $ u\in H^1$ satisfying (\ref{mass}).

We classify the mass constraint based on the ranges of value of the density function defined over the phases $N^+$ and $ N^-$.
\begin{definition}
 For $\rho:\mathbb{R}^k \to \mathbb{R}\in Lip(\mathbb{R}^k)$, satisfying (\ref{classify}), the constraint (\ref{mass}) is classified as follows: 
 \begin{equation}
 \begin{aligned}
 \text{ type I mass constraint }\quad &\text{ if }\quad a<b \text{ or } c<d, \\
 \text{ type II mass constraint }\quad\text\quad&\text{ if } \quad a=b \text{ and } c=d. 
 \end{aligned}
 \end{equation}
\end{definition}
For the widely studied density function $ \rho$, the following classifications apply: 
\begin{itemize}
  \item If $\rho(p)=|p|$ for $p\in\mathbb{R}^2$, $N^+$ and $N^-$ are two disjoint, smooth, and simple closed curves in $\mathbb{R}^2$. The constraint (\ref{mass}), classified as type I mass constraint, corresponds to the constraint studied in \cite{andre2011minimization, andre2014minimization, sternberg1988effect}. 
  \item If $\rho(p)=p$ for $p\in\mathbb{R}$, the constraint (\ref{mass}), classified as the type II mass constraint, aligns with the classical mass constraint (\ref{1.1.2}).
\end{itemize}

In this study, we analyze both type I and type II mass constraints as follows.

\begin{itemize}
\item[($\mathbf{i}$)] Type I mass constraint: We assume the density function $\rho:\mathbb{R}^k \to \mathbb{R}\in Lip(\mathbb{R}^k)$ satisfies the following properties:
\begin{equation}
 \rho(N^-)=\{0\}, \quad \rho(N^+)=[m_1, m_2], \quad m_2>m_1>0.\label{mass constraint}
\end{equation}
This imposes the mass constraint 
\begin{equation}
 \int_{\Omega}\rho(u)dx=m, \quad m\in(0, m_1).\label{W constraint}
\end{equation}
 The argument in this paper for density function $\rho$ in (\ref{mass constraint}) is also applicable for the case where $\rho(N^-)$ is an interval disjointing with $[m_1, m_2]$. For simplicity of description, we only consider $\rho(N^-)=\{0\}$.

\item[($\mathbf{ii}$)] Type II mass constraint: We define the density function using a modified distance function, 
\begin{equation}
 \widetilde{\rho}(p):=d_0(p, N)=\begin{cases}
 d(p, N^-)- \frac{d_N}{2}& \quad\text{ if } d(p, N^-)\leq \frac{d_N}{2}, \\
 \frac{d_N}{2}-d(p, N^+)& \quad\text{ if } d(p, N^+)\leq \frac{d_N}{2}, \\
 0&\quad\text{ if } d(p, N)\geq \frac{d_N}{2}, 
 \end{cases}\label{d_0 def}
\end{equation}
where $d_N$ is the minimum distance between $N^+$ and $ N^-$ (\ref{d_N define}), and $\widetilde{\rho}$ belongs $W^{1, \infty}(\mathbb{R}^k, \mathbb{R}_+)$, as proved in Section \ref{section 5} . From this definition, it follows that
\begin{equation}
\widetilde{\rho}(N^-)=\{-\frac{d_N}{2}\}, \quad \widetilde{\rho}(N^+)=\{\frac{d_N}{2}\}. \label{mass constraint one}
\end{equation}
This imposes the mass constraint 
\begin{equation}
 \int_{\Omega}\widetilde{\rho}(u)dx=m, \quad m\in(-\frac{d_N}{2}, \frac{d_N}{2}).\label{W constraint 1}
\end{equation}

\end{itemize}

It is important to emphasize that, in many cases, even for high-dimensional potential wells, the mass constraint problem often reduces to effectively one-dimensional problems. For the type I mass constraint, this reduction is not possible due to the non-uniform density of the phase $N^+$. In contrast, for the type II mass constraint, it is essentially a one-dimensional problem by the P\'olya-Szeg{\H o} inequality (a detailed introduction to this will be provided in Section \ref{section 5} ). Although our study for this scenario is limited to Section \ref{section 5}, where it is presented as an application of the results derived in \cite{leoni2016second}.

To state our results, we first recall the concept of the isoperimetric profile of a domain. Refer to \cite{ros2001isoperimetric}; the isoperimetric profile $I_{\Omega}:(0, 1) \to \mathbb{R}$ of $\Omega$ is defined by
\begin{equation}
I_{\Omega}(t)=\min \left\{\operatorname{Per}_{\Omega} A: A \subset \Omega \text { s.t. } \chi_{A} \in BV(\Omega) \text { and } \mu(A)=t\right\}, \label{minimizers}
\end{equation}
where $BV(\Omega)$ denotes the space of functions of bounded variation, and $\operatorname{Per}_{\Omega} A$ is the perimeter of $A$ in $\Omega$ (see Section \ref{section 2.1}). This minimum of (\ref{minimizers}) can be achieved by the lower semi-continuity of the $BV$ norm (Lemma \ref{lemma infcontinuous}) and the compactness of $BV$ in $L^1$ (Lemma \ref{lemma compact}).
The properties of minimizers of (\ref{minimizers}) have been extensively studied by Gr\"uter \cite{gruter1987boundary} (also see \cite{howes1983perturbed, sternberg1999connectivity}), who showed that when $\Omega$ is bounded and of class $C^{2}$, minimizers of (\ref{minimizers}) exist, have constant generalized mean curvature $\kappa_{E}$, and intersect the boundary of $\Omega$ orthogonally. In addition, their singular set is empty if $n \leq 7$, and has a Hausdorff dimension of at most $n-8$ if $n \geq 8$.

In the following part of the article, we focus on the type I mass constraint (\ref{W constraint}). For the type II mass constraint (\ref{W constraint 1}), we present all results in Section \ref{section 5}. 

\subsection{Main results }

Assume
\begin{equation}
 F(p)=f\left(d^{2}(p, N)\right), \label{1.1.401}
\end{equation}
 where $F$ attains its minimal value zero on $N$, and $f \in C^{\infty}\left(\mathbb{R}_{+}, \mathbb{R}_{+}\right)$ satisfies the properties: there exists $c_{1}, c_{2}, c_{3}, c_4, t_0>0$ such that
\begin{align}
 \begin{cases}
 c_{1} t \leq f(t) \leq c_{2} t & \text { if } 0 \leq t \leq \delta_{N}^{2}, \\ 
 f(t) \geq c_{3} & \text { if } t \geq \delta_ {N}^{2}, \\
 f(t)\geq c_4t^{\frac{1}{2}} & \text { if } t\geq t_0.\end{cases}
 \tag{$H$}\label{H1}
\end{align}

For $u\in H^1(\Omega)$, we define
\begin{equation*}
\mathbf{E}_{\varepsilon}(u):= 
\int_{\Omega}\left(|\nabla u|^{2}+\frac{1}{\varepsilon^{2}} F(u)\right) d x, 
\end{equation*}
and
\begin{equation}
\mathbf{E}_{\varepsilon}:=\min_{u\in H^1(\Omega)} \left\{
\mathbf{E}_{\varepsilon}(u):\int_{\Omega}\rho(u)dx=m
\right\}, \label{1.9}
\end{equation}
where $m$ is a fixed constant satisfying $0<m<m_1$. The existence of the minimizer $u_\varepsilon$ of (\ref{1.9}) can be established using the direct method of the calculus of variations (by the weak compactness in $H^1(\Omega)$, as ensured by the properties (\ref{H1})).

Let $ c_0^F$ be the energy of the minimal connecting orbits (see Section \ref{subsection 2.2} below for details) between $N^+$ and $N^-$, defined by 
\begin{equation}
c_0^F:=\inf \left\{c^{F}\left(p^{+}, p^{-}\right): p^{ \pm} \in N^{ \pm}\right\} \label{1.2.11}
\end{equation}
where
\begin{equation}
 \begin{aligned}
& c^{F}\left(p^{+}, p^{-}\right):=\\
& \quad \inf \left\{\int_{\mathbb{R}}\left(\left|\xi^{\prime}(t)\right|^{2}+F(\xi)\right) d t: \xi \in H^{1}\left(\mathbb{R}, \mathbb{R}^{k}\right), \xi(\pm \infty)=p^{ \pm}\right\}.\label{1.2.12}
\end{aligned}
\end{equation}
Let $\sigma_m\in[\frac{m}{m_2}, \frac{m}{m_1}]$ such that
\begin{equation}
 I_\Omega(\sigma_m)=\min_{[\frac{m}{m_2}, \frac{m}{m_1}]}I_\Omega(\cdot).\label{1.7}
\end{equation}

The existence of such $\sigma_m$ is guaranteed by the continuity of $I_\Omega$ over the interval $[\frac{m}{m_2}, \frac{m}{m_1}]$.

Now, we state our first main result.
\begin{theorem}
Assume
\begin{equation}
 F(p)=f\left(d^{2}(p, N)\right), 
\end{equation}
$\mathbf{E}_{\varepsilon}$ is defined in (\ref{1.9}). Then, we have
\begin{equation}
\lim_{\varepsilon\to0}\varepsilon\mathbf{E}_\varepsilon = c_0^F I_{\Omega}({\sigma_m}).
\end{equation}

\label{theorem 1}
\end{theorem}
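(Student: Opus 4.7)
The plan is to establish both directions of the equality: a recovery-sequence construction gives $\limsup_\varepsilon \varepsilon \mathbf{E}_\varepsilon \leq c_0^F I_\Omega(\sigma_m)$, and a Modica-style slicing argument gives the matching $\liminf$ bound. The underlying geometric picture is that the minimizer forms an $(n{-}1)$-dimensional interface inside $\Omega$ across which it transitions between $N^+$ and $N^-$, paying $c_0^F$ per unit area of interface, with the area of the interface minimized (subject to the mass constraint) by $I_\Omega(\sigma_m)$.

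For the upper bound, I would first pick a set $A \subset \Omega$ with $\mu(A) = \sigma_m$ and $\operatorname{Per}_\Omega(A) = I_\Omega(\sigma_m)$ (smooth boundary off a singular set of low dimension, by Gr\"uter's theorem), pick a pair $(p_0^+, p_0^-) \in N^+ \times N^-$ and a minimal orbit $\xi_0$ realizing $c_0^F$, and pick a point $q \in N^+$ with $\rho(q) = m/\sigma_m$ (possible since $m/\sigma_m \in [m_1, m_2] = \rho(N^+)$). Then I would define the test function $u_\varepsilon$ to equal $\xi_0(d_A(x)/\varepsilon)$ inside a tubular $O(\varepsilon)$-neighborhood of $\partial A \cap \Omega$ (with $d_A$ the signed distance to $\partial A$), to interpolate smoothly within $N^+$ from $p_0^+$ near the tube out to $q$ on an $O(1)$-scale buffer in the interior of $A$, and to equal $p_0^-$ in the interior of $\Omega \setminus A$. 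A standard Fermi-coordinate computation gives a transition-layer contribution of $c_0^F \operatorname{Per}_\Omega(A)/\varepsilon + o(1/\varepsilon)$, while the $N^+$-valued interpolation costs only $O(1)$ because $F \equiv 0$ along it; Lipschitz continuity of $\rho$ then yields $\int_\Omega \rho(u_\varepsilon)\,dx = m + O(\varepsilon)$, and this $O(\varepsilon)$ deficit is corrected by an arbitrarily small tweak within $N^+$ at negligible energy cost. The singular set when $n\geq 8$ is handled by approximating $A$ with smoother sets having perimeter approaching $I_\Omega(\sigma_m)$.

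For the lower bound I would introduce the degenerate pseudo-metric $d_F(p,q) := \inf \int_0^1 2\sqrt{F(\gamma(s))}\,|\gamma'(s)|\,ds$ over Lipschitz curves, so that reparametrization yields $c_0^F = d_F(N^+, N^-)$, and define $\phi(p) := \min(d_F(p, N^-), c_0^F)$, a Lipschitz function vanishing on $N^-$, equal to $c_0^F$ on $N^+$, with $|\nabla \phi| \leq 2\sqrt{F}$ a.e. Young's inequality then gives
\begin{equation*}
\varepsilon\bigl(|\nabla u_\varepsilon|^2 + \varepsilon^{-2} F(u_\varepsilon)\bigr) \geq 2\sqrt{F(u_\varepsilon)}\,|\nabla u_\varepsilon| \geq |\nabla(\phi \circ u_\varepsilon)|,
\end{equation*}
so $\varepsilon \mathbf{E}_\varepsilon(u_\varepsilon) \geq \int_\Omega |\nabla(\phi \circ u_\varepsilon)|\,dx$. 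Using the energy bound $\int_\Omega F(u_\varepsilon)\,dx = O(\varepsilon)$ together with the far-field growth $f(t) \geq c_4 t^{1/2}$ from $(H)$, I would show $u_\varepsilon$ converges in measure to an $N$-valued map $u^*$ along a subsequence; setting $A := \{u^* \in N^+\}$, one has $\phi(u_\varepsilon) \to c_0^F \chi_A$ in $L^1$. Passing the mass constraint to the limit with the Lipschitz continuity of $\rho$ gives $\int_A \rho(u^*)\,dx = m$ with $\rho(u^*) \in [m_1, m_2]$ on $A$, hence $\mu(A) \in [m/m_2, m/m_1]$. BV lower semicontinuity and the definition of $\sigma_m$ then yield
\begin{equation*}
\liminf_\varepsilon \varepsilon \mathbf{E}_\varepsilon \geq c_0^F \operatorname{Per}_\Omega(A) \geq c_0^F I_\Omega(\mu(A)) \geq c_0^F I_\Omega(\sigma_m).
\end{equation*}

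The main obstacle is twofold. In the upper bound, the difficulty is arranging the recovery sequence to meet the mass constraint \emph{exactly} while keeping the leading-order energy sharp; this is resolved by decoupling the $\varepsilon$-scale interfacial transition from the $O(1)$-scale density-adjusting interpolation within $N^+$, which is possible precisely because $\rho(N^+)$ is the full interval $[m_1, m_2]$ and $F$ vanishes on $N^+$. In the lower bound, the delicate step is upgrading the compactness of $\phi(u_\varepsilon)$ in $BV$ to actual strong convergence of $u_\varepsilon$ into $N$, and identifying the limit set $A$ through the mass constraint; this leans essentially on the growth hypothesis $f(t)\geq c_4 t^{1/2}$ at infinity to rule out energy or mass escaping to infinity.
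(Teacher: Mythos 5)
Your proposal is correct in substance, and the upper bound is essentially the paper's construction (minimizing set for $I_\Omega(\sigma_m)$, truncated minimal connection across a tubular neighborhood of the interface, an $N^+$-valued interpolation to a point $q$ with $\rho(q)=m/\sigma_m$, and a small adjustment to hit the mass exactly; the paper implements the adjustment as a translation $\tau_\varepsilon$ of the profile fixed by an intermediate-value argument, which also covers the endpoint cases $\sigma_m\in\{m/m_1,m/m_2\}$ where a one-sided tweak inside $N^+$ alone would not suffice). Your lower bound, however, is genuinely different from the paper's. You compose $u_\varepsilon$ with the truncated degenerate geodesic distance $\phi=\min(d_F(\cdot,N^-),c_0^F)$, use Young's inequality and $BV$ lower semicontinuity, and conclude via $\operatorname{Per}_\Omega(A)\geq I_\Omega(\mu(A))\geq I_\Omega(\sigma_m)$; this is the classical Modica/Fonseca--Tartar/Sternberg route and it does prove Theorem 1 (note $c_0^F=d_F(N^+,N^-)$ is exactly the content of Lemma \ref{minimal connection}, and hypothesis $(H)$ guarantees $\phi\equiv c_0^F$ outside a bounded set, so $\phi$ is globally Lipschitz and the chain rule applies). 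The paper instead slices by the level sets of $d(u_\varepsilon,N)$ via the co-area formula (Proposition \ref{crude estimate}) and applies the isoperimetric inequality to each superlevel set $\Omega^\pm_{\varepsilon,\lambda}$; this buys a quantitative remainder $-C\varepsilon^{-1/2}$ (Proposition \ref{lower}) that is reused for Theorem \ref{theorem 2} and Theorem \ref{Theorem 3}, whereas your soft argument yields only the $\liminf$. One imprecision to fix: you should not assert that $u_\varepsilon$ converges in measure to an $N$-valued map $u^*$ --- the tangential part of $u_\varepsilon$ along $N^\pm$ has no compactness at this stage (the paper only extracts an $L^1$-convergent subsequence much later, via $SBV$ compactness under the hypotheses of Theorem \ref{theorem 2}). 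What you actually need, and what is available, is (i) $BV$-compactness of the scalar $\phi(u_\varepsilon)$, giving $\phi(u_{\varepsilon_i})\to c_0^F\chi_A$ in $L^1$ for a finite-perimeter set $A$, and (ii) $\mu(A)\in[m/m_2,m/m_1]$, which follows by replacing $\rho(u_\varepsilon)$ with $\rho(\Pi(u_\varepsilon))\in\{0\}\cup[m_1,m_2]$ up to an $O(\sqrt{\varepsilon})$ error in $L^1$ (this is exactly the paper's Proposition \ref{proposition 3.1}), not from a pointwise limit $u^*$.
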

\begin{remark}
\text{}{}

(1) Theorem \ref{theorem 1} confirms Remark 1.2 (4)(7)(8) in \cite{lin2012phase}. Remarks hypothesize that the limiting problems are isoperimetric problems or partition problems under mass-type constraints, and the maps describing sharp transitions between various regions behave essentially like in a one-dimensional problem.

(2) Theorem \ref{theorem 1} was previously proven by \cite{fonseca1989gradient, kohn1989local, leoni2016second,sternberg1991vector} when $N=$ $\{p, q\} \subset \mathbb{R}^{k}$ is a set of two points under the mass constraint $ \int_{\Omega} u d x=m\in\mathbb{R}^k$, by \cite{sternberg1988effect} when $N \subset \mathbb{R}^{2}$ is the union of two concentric simple closed curves under the mass constraint $ \int_{\Omega} |u| d x=m$ .

\label{remark}
\end{remark}

Theorem \ref{theorem 1} implies
\begin{equation}
 \mathbf{E}_\varepsilon=\frac{c_0^FI_{\Omega}({\sigma_m})}{\varepsilon}+\mathbf{R}_\varepsilon, \quad \mathbf{R}_\varepsilon=\frac{o(1)}{\varepsilon}.
\end{equation}
As in \cite{andre2011minimization, andre2014minimization, lin2012phase}, we care about how to characterize the remainder $\mathbf{R}_\varepsilon$ as $\varepsilon\to0$. To characterize $\mathbf{R}_\varepsilon$, we require additional properties on the dimension of the region $n$ and the isoperimetric profile $I_\Omega$. For $n$, we restrict $n\leq7$ to ensure the regularity of minimizers of (\ref{minimizers}). For the isoperimetric profile $I_\Omega$, we assume there exists ${\sigma_m}\in (\frac{m}{m_2}, \frac{m}{m_1})$ such for some small $\eta>0$, 
\begin{equation}
 I_\Omega({\sigma_m})\leq\min_{[\frac{m}{m_2}-\eta, \frac{m}{m_1}+\eta]}I_\Omega(\cdot).\label{G1} \tag{$G$}
\end{equation}
Using the condition (\ref{G1}), we know that there exists $\Omega^+\subset\Omega$, $\mu(\Omega^+)={\sigma_m}\in(\frac{m}{m_1}, \frac{m}{m_2})$, and 
\begin{equation}
 \Gamma:=\overline{\partial \Omega^+\cup\Omega}\text{ and } \Omega^-:=\Omega\backslash\Omega^+\label{gammer}
\end{equation}
such that $\operatorname{Per}_\Omega \Omega^+=I_{\Omega}({\sigma_m})=H^{n-1}(\Gamma)$. In fact, if $I_\Omega$ does not reach the minimum $I_\Omega({\sigma_m})$ over the bounds $\{\frac{m}{m_2}, \frac{m}{m_1}\}$, condition (\ref{G1}) is satisfied. In particular, condition (\ref{G1}) corresponds to the property $NC$ for the region $\Omega$ as discussed in \cite{andre2011minimization}.

Now, we state our second theorem.
\begin{theorem}
Assume $n\leq7$ and $ I_\Omega$ satisfies condition (\ref{G1}). Assume
\begin{equation}
 F(p)=f\left(d^{2}(p, N)\right), 
\end{equation}
$\mathbf{E}_{\varepsilon}$ is defined in (\ref{1.9}). Then, for small $\varepsilon$,

\begin{equation}
 \mathbf{E}_\varepsilon = \frac{c_0^F I_{\Omega}({\sigma_m})}{\varepsilon}+O(1), 
\end{equation}
where $c_0^F$ and $I_{\Omega}({\sigma_m})$ are defined same as in Theorem \ref{theorem 1}.
\label{theorem 2}
\end{theorem}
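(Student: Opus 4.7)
The plan is to sharpen both the upper and lower bounds that underlie Theorem \ref{theorem 1}, whose $o(1/\varepsilon)$ remainder is insufficient. The hypothesis $n\leq 7$ ensures by Gr\"uter's regularity that an isoperimetric minimizer $\Omega^+$ with $\mu(\Omega^+)=\sigma_m$ has a smooth boundary $\Gamma$ inside $\Omega$ admitting a uniform tubular neighborhood. Condition (\ref{G1}) supplies two complementary benefits: $\sigma_m$ lies in the \emph{open} interval $(m/m_2,m/m_1)$, giving room to satisfy the mass constraint by adjusting inside $\Omega^+$; and $I_\Omega\geq I_\Omega(\sigma_m)$ on the enlarged interval $[m/m_2-\eta,m/m_1+\eta]$, which feeds into a clean isoperimetric lower bound.

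For the upper bound I would build an explicit recovery sequence. Select $\hat p^\pm\in N^\pm$ attaining $c^F(\hat p^+,\hat p^-)=c_0^F$ and a minimizing orbit $\xi_0$ with $\xi_0(\pm\infty)=\hat p^\pm$, which decays exponentially to $\hat p^\pm$ by standard ODE arguments. Let $d_\Gamma$ be the signed distance to $\Gamma$ (positive on $\Omega^+$). In a tubular neighborhood $T_\delta(\Gamma)$ define $u_\varepsilon^\sharp(x)=\xi_0\bigl(d_\Gamma(x)/\varepsilon\bigr)$, smoothly cut off to $\hat p^\pm$ at $\{|d_\Gamma|=\delta\}$, and set $u_\varepsilon^\sharp\equiv\hat p^\pm$ outside. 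Expanding the tubular volume element as $dx=(1+O(\varepsilon t))\,d\mathcal{H}^{n-1}(y)\,ds$ with $s=\varepsilon t$, and using the exponential tails of $\xi_0$, the resulting energy is $c_0^F I_\Omega(\sigma_m)/\varepsilon+O(1)$. To enforce the mass constraint exactly, observe that $m/\sigma_m\in(m_1,m_2)$ by (\ref{G1}); since $\rho|_{N^+}$ is continuous on the connected manifold $N^+$ with $\rho(N^+)=[m_1,m_2]$ by (\ref{mass constraint}), the intermediate value theorem delivers $p^\star\in N^+$ with $\rho(p^\star)=m/\sigma_m$. Joining $\hat p^+$ to $p^\star$ by a smooth curve $\gamma\subset N^+$ and choosing a fixed-width bump $\varphi$ supported well inside $\Omega^+$, I replace $u_\varepsilon^\sharp$ there by $\gamma(\varphi(x))$; this contributes zero $F$-energy (since $\gamma\subset N^+$) and only $O(1)$ Dirichlet energy ($|\nabla\varphi|$ is $\varepsilon$-independent), and tuning the amplitude/support of $\varphi$ continuously makes the total mass equal $m$ exactly.

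For the lower bound, given a minimizer $u_\varepsilon$, I introduce $\Phi:\mathbb{R}^k\to[0,\infty)$ as the geodesic distance to $N^-$ in the (possibly degenerate) conformal metric $F\,\delta_{ij}$, so that $\Phi|_{N^-}\equiv 0$, $\Phi|_{N^+}\geq c_0^F/2$, and $|\nabla\Phi(p)|\leq\sqrt{F(p)}$ a.e. By Young's inequality,
\begin{equation*}
2\int_\Omega |\nabla\Phi(u_\varepsilon)|\,dx\leq 2\int_\Omega|\nabla u_\varepsilon|\sqrt{F(u_\varepsilon)}\,dx\leq \varepsilon\mathbf{E}_\varepsilon(u_\varepsilon).
\end{equation*}
Hypothesis (\ref{H1}) forces the transition region $\{d(u_\varepsilon,N)>\delta\}$ (for a suitably $\varepsilon$-dependent $\delta$) to be small, and combined with the mass constraint and Lipschitz continuity of $\rho$, the measure of the $N^+$-phase $\{d(u_\varepsilon,N^+)<\delta\}$ lies in $[m/m_2-o(1),m/m_1+o(1)]$ as $\varepsilon\to 0$. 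For any $t\in(0,c_0^F/2)$ the super-level set $\{\Phi(u_\varepsilon)>t\}$ differs from this $N^+$-phase by a subset of the transition region, so its measure stays in $[m/m_2-\eta,m/m_1+\eta]$ for $\varepsilon$ small. Condition (\ref{G1}) then yields $I_\Omega(\mu\{\Phi(u_\varepsilon)>t\})\geq I_\Omega(\sigma_m)$, and the co-area formula
\begin{equation*}
\int_\Omega|\nabla\Phi(u_\varepsilon)|\,dx=\int_0^{\infty}\operatorname{Per}_\Omega\{\Phi(u_\varepsilon)>t\}\,dt\geq\tfrac{c_0^F}{2}I_\Omega(\sigma_m)
\end{equation*}
produces $\mathbf{E}_\varepsilon\geq c_0^F I_\Omega(\sigma_m)/\varepsilon$, completing the $O(1)$ two-sided estimate.

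The main obstacle I anticipate is the uniform control, over $t\in[0,c_0^F/2]$, of $\mu\{\Phi(u_\varepsilon)>t\}$: one must couple the transition-region estimate (driven by the energy bound and (\ref{H1})) with the $N^\pm$-phase estimate (driven by the mass constraint and the Lipschitz character of $\rho$), and it is precisely the open interiority of $\sigma_m$ together with the neighborhood-minimality in (\ref{G1}) that absorb the vanishing slack. On the upper-bound side, the delicate point is decoupling the choice of the minimizing pair $(\hat p^+,\hat p^-)$ from the choice of the mass-correcting $p^\star$: the correction must live entirely within $N^+$ to avoid an additional $1/\varepsilon$ contribution from the potential.
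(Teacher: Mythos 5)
Your proposal is correct in substance and follows the same overall strategy as the paper: an explicit one-dimensional profile transported along the signed distance to the isoperimetric interface for the upper bound (with the mass fixed by a correction living entirely inside $N^+$, exactly as the paper does via its map $u_0$ and the shift $\tau_\varepsilon$), and a calibration/co-area lower bound fed by a volume estimate coming from the mass constraint together with condition (\ref{G1}). The one genuine packaging difference is in the lower bound: you slice by the degenerate geodesic distance $\Phi(\cdot)=\operatorname{dist}_F(\cdot,N^-)$ with unit weight, whereas the paper first discards the tangential part $\nabla\Pi(u_\varepsilon)$ (Proposition \ref{crude estimate}) and then slices by the Euclidean distance $d(u_\varepsilon,N)$ with weight $\sqrt{f(\lambda^2)}$; these are the same Modica--Mortola calibration after a change of variables, and your version is slightly more compact because $\Phi$ absorbs the region $\{d(u_\varepsilon,N)>\delta_N\}$ automatically. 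The obstacle you flag --- uniform control of $\mu\{\Phi(u_\varepsilon)>t\}$ over $t\in(0,c_0^F/2)$ --- is real but harmless and is resolved exactly as in the paper: since $\mu(E_{\varepsilon,\delta})\leq C\varepsilon/\delta^2$ by (\ref{H1}) and the energy bound, and since $\Phi(p)>t$ forces $d(p,N^-)\gtrsim\sqrt{t}$ (and symmetrically near $N^+$), the required volume estimate $\mu\{\Phi(u_\varepsilon)>t\}\in[\tfrac{m}{m_2}-\eta,\tfrac{m}{m_1}+\eta]$ holds for all $t\in(K\varepsilon,\tfrac{c_0^F}{2}-K\varepsilon)$, and discarding the two slivers of length $O(\varepsilon)$ from the $t$-integration costs only $O(1)$ in $\mathbf{E}_\varepsilon$ --- this is the analogue of the paper's truncation $\int_0^{L\varepsilon^{1/2}}\sqrt{f(\lambda^2)}\,d\lambda\leq C\varepsilon$ in (\ref{estimate 1}). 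With that endpoint truncation made explicit, and with the remark that for the $O(1)$ remainder the first-order term in the tube expansion $H^{n-1}(\{d_\Gamma=s\})=H^{n-1}(\Gamma)+O(s)$ already suffices (the paper's zero-mean-curvature refinement to $O(s^2)$ is not needed here), your argument is complete.
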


\begin{remark}
\text{{}}

(1) Although the estimate $O(1)$ of the remainder $\mathbf{R}_\varepsilon$ is not optimal, it suffices to establish Theorem \ref{Theorem 3} as stated below. For a complete characterization for the remainder $\mathbf{R}_\varepsilon$ under additional constraints, refer to (\cite{lin2012phase}, Theorem 1.3) and (\cite{andre2014minimization}, Theorem 2). 

(2) Theorem \ref{theorem 2} was previously proven by Andr{\'e} and Shafrir \cite{andre2011minimization} for $N^\pm$, the disjoint union of two simple closed curves in $\mathbb{R}^{2}$ under the mass constraint $ \int_\Omega|u|dx=c.$
\end{remark}
Theorems \ref{theorem 1} and \ref{theorem 2} provide a characterization for $\mathbf{E}_\varepsilon$.
The next goal is to characterize the convergence for $u_{\varepsilon_i}\to v
$ for any $L^1$-convergent subsequence of $\{u_\varepsilon\}$. In contrast to the one-dimensional phase, $u$ cannot be determined by condition $ F (u) =0$. For $\{u_\varepsilon\}$, there is no sobolev compactness. This following result relies on the characterization for $\mathbf{E}_\varepsilon$ in Theorem \ref{theorem 2}, and compactness is obtained in $SBV$ spaces, as described in detail in Section \ref{section 4}.
\begin{theorem}
     Under the same assumptions of Theorem \ref{theorem 2}. Let $u_\varepsilon$ be the minimizer of (\ref{1.9}) under $\int_{\Omega}\rho(u)\, dx=m$. Then, there exists $u\in L^1(\Omega, N)$ such that after passing possible subsequences, $u_{\varepsilon_{i}}$ converges to $u$ in $L^1(\Omega, \mathbb{R}^k)$, and u satisfies
    \begin{itemize}
\item[(1)]$u\in H^1(\Omega^\pm, N^\pm)$,
\item[(2)]$\int_\Omega \rho(u)dx=m$,
\label{Theorem 3}
    \end{itemize}
where $\Omega^+$ realizes the minimum $I(\sigma_m)$ and $\Omega^-:=\Omega\backslash\Omega^+$.
   \end{theorem}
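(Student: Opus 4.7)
The plan has four steps: from the $O(\varepsilon^{-1})$ bound of Theorem \ref{theorem 2}, extract a $BV$-bound on a scalar composition $\Phi \circ u_\varepsilon$ (Modica's trick), upgrade to an $L^1$-limit $u$ via the $SBV$ framework of Section \ref{section 4}, identify the partition $\{\Omega^+,\Omega^-\}$ as an isoperimetric minimizer by matching the sharp asymptotic of Theorem \ref{theorem 1}, and finally obtain $H^1$-regularity of $u$ on each phase from interior gradient estimates.

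\emph{Compactness.} Theorem \ref{theorem 2} gives $\int_\Omega F(u_\varepsilon)\,dx = O(\varepsilon)$ and $\varepsilon\int_\Omega |\nabla u_\varepsilon|^2 = O(1)$; coupled with the superlinear bound $f(t)\geq c_4\sqrt{t}$ in (\ref{H1}), $u_\varepsilon$ is equi-integrable and concentrates on $N$ in measure. Build a Lipschitz scalar $\Phi:\mathbb{R}^k\to\mathbb{R}$ from the Finsler geodesic distance in the degenerate metric $\sqrt{F(p)}\,|dp|$, truncated so that $\Phi \equiv 0$ on $N^-$, $\Phi \equiv c_0^F/2$ on $N^+$, and $|\nabla\Phi(p)|\leq \sqrt{F(p)}$ pointwise. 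By the chain rule and AM--GM,
\[
 \int_\Omega |\nabla(\Phi\circ u_\varepsilon)|\,dx \;\leq\; \int_\Omega \sqrt{F(u_\varepsilon)}\,|\nabla u_\varepsilon|\,dx \;\leq\; \tfrac12\,\varepsilon\,\mathbf{E}_\varepsilon(u_\varepsilon) = O(1),
\]
so $\{\Phi\circ u_\varepsilon\}$ is bounded in $BV(\Omega)$; a subsequence converges in $L^1$ to some $w\in BV(\Omega,\{0,c_0^F/2\})$. Setting $\Omega^+ := \{w = c_0^F/2\}$ and $\Omega^- := \Omega\setminus\Omega^+$, the $SBV$-compactness machinery of Section \ref{section 4} upgrades this scalar convergence to $u_{\varepsilon_i}\to u$ in $L^1(\Omega,\mathbb{R}^k)$ with $u\in N^\pm$ a.e.\ on $\Omega^\pm$. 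The Lipschitz continuity of $\rho$ passes the constraint to the limit, proving (2) and forcing $\mu(\Omega^+)\in[m/m_2,\,m/m_1]$.

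\emph{Identification of $\Omega^+$.} By $BV$ lower semicontinuity and the Modica--Mortola inequality,
\[
 \liminf_{\varepsilon\to 0}\varepsilon\mathbf{E}_\varepsilon(u_\varepsilon)\;\geq\; 2\int_\Omega|\nabla w|\;=\; c_0^F\,\operatorname{Per}_\Omega(\Omega^+).
\]
Combined with $\lim\varepsilon\mathbf{E}_\varepsilon = c_0^F I_\Omega(\sigma_m)$ from Theorem \ref{theorem 1} and the chain $\operatorname{Per}_\Omega(\Omega^+)\geq I_\Omega(\mu(\Omega^+))\geq I_\Omega(\sigma_m)$ (the second inequality from the minimizing property (\ref{1.7}) of $\sigma_m$ on $[m/m_2, m/m_1]$), equality is forced throughout, so $\Omega^+$ realizes the isoperimetric minimum $I_\Omega(\sigma_m)$.

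\emph{Bulk $H^1$-regularity and the main obstacle.} The $O(1)$ excess in Theorem \ref{theorem 2} forces the transition energy to concentrate in an $O(\varepsilon)$-tube around $\Gamma$, so that for any $V\Subset\Omega^+$ (or $V\Subset\Omega^-$) the interior gradient integral $\int_V|\nabla u_\varepsilon|^2$ is bounded uniformly in $\varepsilon$; weak $H^1$-compactness and Fatou then yield $u\in H^1(V, N^+)$, and exhausting $\Omega^\pm$ by such $V$ gives (1). The principal obstacle is precisely this last step: localizing the excess away from $\Gamma$ requires an energy-comparison argument against a competitor that preserves both the mass constraint and the sharp transition cost $c_0^F I_\Omega(\sigma_m)/\varepsilon$, and this is delicate because modifying $u_\varepsilon$ on $V$ necessitates a compensating perturbation on a thin shell attached to $\Gamma$ that does not inflate the interface energy. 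A secondary technical point is the construction of the scalar $\Phi$ for higher-dimensional wells $N^\pm$, where the identification $\Phi|_{N^+}-\Phi|_{N^-} = c_0^F/2$ leverages the variational definition (\ref{1.2.12}) of the minimal-orbit energy; this is the vector-valued refinement of Modica's scalar trick that distinguishes the present setting from the one-dimensional well case.
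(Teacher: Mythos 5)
There is a genuine gap, and you have in fact flagged it yourself: conclusion (1), the $H^1(\Omega^\pm,N^\pm)$ regularity of the limit, is exactly the content of the theorem that your sketch leaves as an unresolved ``principal obstacle.'' The route you propose for it --- localizing $\int_V|\nabla u_\varepsilon|^2$ on compact subsets $V\Subset\Omega^\pm$ by an energy-comparison against a mass-preserving competitor --- is not needed and would indeed be delicate. The paper's resolution is structural: Proposition \ref{crude estimate} decomposes the energy pointwise on $\Omega^+_{\varepsilon,\delta}\cup\Omega^-_{\varepsilon,\delta}$ via $|\nabla u_\varepsilon|^2\geq(1-C\delta)|\nabla(\Pi(u_\varepsilon))|^2+|\nabla(d(u_\varepsilon))|^2$, so that the \emph{projected} gradient enters as a separate nonnegative term alongside the transition term $\frac{2}{\varepsilon}\int_0^{d_N/2}(H^{n-1}(\partial^*\Omega^+_{\varepsilon,\lambda}\cap\Omega)+H^{n-1}(\partial^*\Omega^-_{\varepsilon,\lambda}\cap\Omega))\sqrt{f(\lambda^2)}\,d\lambda$. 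The refined lower-bound analysis shows this transition term is already $\geq c_0^F H^{n-1}(\Gamma)/\varepsilon - C$, and Proposition \ref{fine upper} gives the matching upper bound $c_0^F H^{n-1}(\Gamma)/\varepsilon + C$; subtracting sandwiches $\int_{\Omega^+_{\varepsilon,\delta}\cup\Omega^-_{\varepsilon,\delta}}|\nabla(\Pi(u_\varepsilon))|^2\leq C$ \emph{globally}, with no localization, no competitor, and no interference with the mass constraint. This is why the $O(1)$ precision of Theorem \ref{theorem 2} is the essential input, and why the bound is on $\nabla(\Pi(u_\varepsilon))$ rather than on $\nabla u_\varepsilon$ itself.

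This same bound is also what your compactness step silently relies on. Convergence of the scalar $\Phi\circ u_\varepsilon$ in $BV$ does not by itself yield $L^1$ convergence of the vector-valued $u_{\varepsilon_i}$: the $SBV$ compactness theorem is applied to $v_i=\Pi(u_{\varepsilon_i})\chi_{\Omega^+_{\varepsilon_i,\delta_i}\cup\Omega^-_{\varepsilon_i,\delta_i}}$ and requires precisely (a) the uniform $L^2$ bound on the absolutely continuous part $\nabla v_i$ (the bound above), and (b) a uniform bound on $H^{n-1}(J_{v_i})$, which comes from selecting a good level $\delta_\varepsilon\in[\delta^*/2,\delta^*]$ by a Fubini/mean-value argument over the co-area variable. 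Finally, even granting $v\in SBV$, one must still show $H^{n-1}(J_v\cap\Omega^\pm)=0$ to conclude $v\in H^1(\Omega^\pm,N^\pm)$; this follows from the weak convergence of the measures $H^{n-1}\llcorner(\partial^*\Omega^\pm_{\varepsilon_i,\delta_i}\cap\Omega)$ to $H^{n-1}\llcorner\Gamma$ with no mass escaping to $\{|d_\Gamma|>\eta\}$, a step absent from your sketch. Your identification of $\Omega^+$ as an isoperimetric minimizer via the Modica--Mortola liminf inequality is fine and parallels the paper's Claim 2, but without the projected-gradient bound the proof of (1) does not close.
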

\begin{remark}
\leavevmode
Our results are consistent with those under the Dirichlet boundary condition considered in \cite{lin2012phase}. Unfortunately, we do not have information on the one-sided trace of $u$ on $\Gamma$, because the mass-type constraint has unique difficulties with the Dirichlet boundary constraint.
\end{remark}

 The remainder of the paper is organized as follows. In Section \ref{section 2}, we recall the fundamental results that will be employed throughout the work. The main definitions and the theory of minimal connection developed by Lin-Pan-Wang \cite{lin2012phase} are included. Section \ref{section 3} establishes both lower bound and upper bound estimates for Theorem \ref{theorem 1} and Theorem \ref{theorem 2}. In Section \ref{section 4}, we prove Theorem \ref{Theorem 3}. In Section \ref{section 5}, we apply the results from \cite{leoni2016second} to investigate the type II mass constraint.

\section{Preliminaries \label{section 2}}
\subsection{Functions of bounded variation and Federer's co-area\label{section 2.1} formula}

\begin{definition}{($BV(\Omega)$)}
The space of functions of bounded variation, $BV(\Omega)$, consists of all $u \in {L}^1(\Omega)$ for which $\int_{\Omega}|\nabla u|<\infty$, where
\begin{equation*}
 \int_{\Omega}|\nabla u|=|Du|(\Omega):=\sup _{\substack{g \in C_{0}^{1}\left(\Omega, R^{n}\right) \\|g| \le 1}} \int_\Omega u(\nabla \cdot g(x)) d x.
\end{equation*}
Here, $|\nabla u|(\Omega)$ represents the total variation of $u$ in $\Omega$. $B V(\Omega)$ is a Banach space under the norm:
\begin{equation}
 \|u\|_{B V(\Omega)}=\int_{\Omega}|u| d x+\int_{\Omega}|\nabla u|.\label{1.2.1}
\end{equation}

\end{definition}
We now define the perimeter of a set $A$ in $\Omega$:
\begin{definition}
 \begin{equation}
 \begin{aligned}
\operatorname{Per}_{\Omega} A&=\text { perimeter of } A \text { in } \Omega=\int_{\Omega}\left|\nabla \chi_{A}\right|
=\sup _{\substack{g \in C_{0}^{1}\left(\Omega, R^{n}\right) \\|g| \le 1}} \int \chi_{A}(\nabla \cdot g(x)) d x.\label{1.2.2}
\end{aligned}
\end{equation}
\end{definition}
Then, let us introduce two well-known lemmas about the $BV$ space.
\begin{lemma}{(Lower semi-continuity)}\cite{evans2018measure} If $u_{\varepsilon} \to u$ in ${L}^1(\Omega)$, then
\begin{equation}
 \lim _{\varepsilon} \inf \int_{\Omega}\left|\nabla u_{\varepsilon}\right| \geq \int_{\Omega}|\nabla u|.\label{1.2.3}
\end{equation}
\label{lemma infcontinuous}
\end{lemma}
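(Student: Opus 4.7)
The plan is to argue directly from the duality definition of the total variation given in (\ref{1.2.1}). Observe that $\int_\Omega |\nabla w|$ is by definition the supremum, over all admissible test fields $g \in C_0^1(\Omega, \mathbb{R}^n)$ with $|g| \le 1$, of the linear functionals $\Lambda_g(w) := \int_\Omega w\, (\nabla \cdot g)\, dx$. Each such $\Lambda_g$ is continuous on $L^1(\Omega)$ because $\nabla \cdot g$ is a fixed bounded function with compact support. A pointwise supremum of a family of $L^1$-continuous linear functionals is automatically lower semi-continuous on $L^1$, which is precisely the form of the desired inequality; so the whole proof reduces to turning this observation into an explicit chain of inequalities.

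To implement this I would first fix an arbitrary admissible $g$ and pass to the $L^1$-limit in $\Lambda_g$: since $\|u_\varepsilon - u\|_{L^1} \to 0$ and $\|\nabla \cdot g\|_\infty < \infty$, one has
\[
\Lambda_g(u) \;=\; \lim_{\varepsilon \to 0} \Lambda_g(u_\varepsilon).
\]
Next, for each $\varepsilon$ the very definition (\ref{1.2.1}) applied to $u_\varepsilon$ yields $\Lambda_g(u_\varepsilon) \le \int_\Omega |\nabla u_\varepsilon|$, so taking the liminf gives
\[
\Lambda_g(u) \;\le\; \liminf_{\varepsilon \to 0}\, \int_\Omega |\nabla u_\varepsilon|.
\]
Finally, taking the supremum over all admissible $g$ on the left-hand side produces $\int_\Omega |\nabla u| \le \liminf_{\varepsilon \to 0} \int_\Omega |\nabla u_\varepsilon|$, which is the claim.

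Since each step is essentially mechanical, there is no serious analytic obstacle. The one subtlety worth flagging is that nothing is assumed a priori about $u$ belonging to $BV(\Omega)$: if $\liminf_{\varepsilon \to 0}\int_\Omega |\nabla u_\varepsilon| = +\infty$ the inequality holds vacuously, and otherwise the argument above furnishes a finite upper bound for the supremum defining $\int_\Omega |\nabla u|$, which both certifies $u \in BV(\Omega)$ and establishes the stated inequality. I would not need any smoothness of the $u_\varepsilon$ beyond $L^1$, nor any regularity of $\partial\Omega$ beyond openness, for this basic form of the lemma.
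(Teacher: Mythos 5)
Your argument is correct and is exactly the standard duality proof of lower semicontinuity of the total variation: the paper gives no proof of its own and simply cites Evans--Gariepy, whose proof is precisely this supremum-of-$L^1$-continuous-functionals argument. Your closing remark that the inequality also certifies $u\in BV(\Omega)$ when the liminf is finite is a correct and worthwhile observation.
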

\begin{lemma}{(Compactness of $BV$ in ${L}^1$)}\cite{evans2018measure} Bounded sets in the $BV$ norm are compact in the ${L}^1$ norm.\label{lemma compact}
\end{lemma}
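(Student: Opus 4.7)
The plan is to deduce the $L^{1}$-compactness from the Fréchet--Kolmogorov criterion, using that BV-boundedness forces an equi-continuity-in-translation estimate. Concretely, given a sequence $\{u_n\}\subset BV(\Omega)$ with $\sup_n\|u_n\|_{BV(\Omega)}\leq M<\infty$, I would reduce to the case of functions supported in a fixed bounded open set $U\Supset\Omega$ by first applying a BV-extension operator (available since the $\Omega$ in the paper is bounded and smooth, hence Lipschitz), producing $\tilde u_n\in BV(\mathbb{R}^n)$ with $\operatorname{supp}\tilde u_n\subset\overline{U}$ and $\|\tilde u_n\|_{BV(\mathbb{R}^n)}\leq CM$. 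Then I would verify the two hypotheses of Fréchet--Kolmogorov on $L^1(\mathbb{R}^n)$: uniform $L^1$-boundedness (immediate from the BV bound and boundedness of $U$), equicontinuity under translations, and tightness (immediate from the uniform bound on supports).

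The analytic heart of the argument is the translation estimate
\[
\int_{\mathbb{R}^n}|\tilde u(x+h)-\tilde u(x)|\,dx\;\leq\;|h|\cdot|D\tilde u|(\mathbb{R}^n)\qquad\text{for all }h\in\mathbb{R}^n,\;\tilde u\in BV(\mathbb{R}^n).
\]
I would establish this first for $\tilde u\in C^1_c(\mathbb{R}^n)$ by the fundamental theorem of calculus, writing $\tilde u(x+h)-\tilde u(x)=\int_0^1\nabla\tilde u(x+th)\cdot h\,dt$ and integrating, and then pass to general BV via mollification: setting $\tilde u^{\varepsilon}=\tilde u\ast\eta_{\varepsilon}$, one has $\tilde u^{\varepsilon}\to\tilde u$ in $L^1$ and $\|\nabla\tilde u^{\varepsilon}\|_{L^1}\to|D\tilde u|(\mathbb{R}^n)$ by the standard lower-semicontinuity and mollification identity for the total variation measure. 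Applied to the extended sequence, this yields
\[
\sup_n\int_{\mathbb{R}^n}|\tilde u_n(x+h)-\tilde u_n(x)|\,dx\;\leq\;CM|h|\xrightarrow[h\to0]{}0,
\]
so Fréchet--Kolmogorov furnishes a subsequence $\tilde u_{n_j}\to v$ in $L^1(\mathbb{R}^n)$; restricting to $\Omega$ gives the claimed convergence.

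The main technical obstacle is the boundary behaviour, i.e.\ producing the extension $\tilde u_n$ while keeping the BV norms uniformly controlled. For a Lipschitz domain this is handled by a reflection-type construction across finitely many boundary charts combined with a partition of unity, taking some care that the singular part $|D u_n|$ does not charge $\partial\Omega$ in a way that blows up after reflection; the standard trick is to extend first to a collar neighbourhood and then cut off smoothly. If one wished to avoid extension entirely, the alternative is to work directly on $\Omega$, prove the translation estimate on interior subsets $\Omega'\Subset\Omega$ (where it is immediate from mollification), and control the $L^1$-mass in the boundary strip $\{x\in\Omega:\operatorname{dist}(x,\partial\Omega)<\delta\}$ uniformly in $n$ using the BV trace inequality $\int_{\partial\Omega}|u|\,d\mathcal H^{n-1}\leq C\|u\|_{BV(\Omega)}$ together with a tubular-neighbourhood foliation; either route is routine once the translation estimate is in place.
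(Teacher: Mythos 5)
Your argument is correct. The paper offers no proof of this lemma at all --- it is quoted verbatim from Evans--Gariepy --- so there is nothing to match line by line; the relevant comparison is with the standard textbook proof behind the citation, which proceeds by strict smooth approximation ($g_k\in C^\infty$ with $\|u_k-g_k\|_{L^1}<1/k$ and $\int|\nabla g_k|\le \|u_k\|_{BV}+1$) followed by an appeal to Rellich--Kondrachov compactness for bounded sequences in $W^{1,1}$. Your route --- BV extension to $\mathbb{R}^n$, the translation estimate $\|\tau_h\tilde u-\tilde u\|_{L^1}\le |h|\,|D\tilde u|(\mathbb{R}^n)$ via mollification, then Fr\'echet--Kolmogorov --- is a correct and self-contained alternative that essentially unwinds the Rellich black box, since $W^{1,1}$-compactness on a Lipschitz domain is itself usually proved by exactly this extension-plus-translation argument. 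What your version buys is transparency (the modulus of continuity in translation is explicit and linear in $|h|$, with constant $CM$); what the textbook version buys is brevity, by outsourcing the boundary issues to the approximation and embedding theorems. Two small remarks: your concern that ``the singular part $|Du_n|$ might charge $\partial\Omega$'' is vacuous, since for $u\in BV(\Omega)$ the measure $|Du|$ lives on the open set $\Omega$ and the jump created across $\partial\Omega$ by the extension is already accounted for in the operator norm of the extension; and for the Fr\'echet--Kolmogorov step one only needs the inequality $\int|\nabla\tilde u^{\varepsilon}|\,dx\le|D\tilde u|(\mathbb{R}^n)$, valid for every $\varepsilon$ by Young's inequality, rather than the full convergence of total variations.
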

We present Federer's co-area formula for measurable functions, which will be frequently utilized in subsequent sections.
\begin{lemma}
 Assume $f: \Omega \to \mathbb{R}$ is Lipschitz continuous, and suppose $g: \Omega \to \mathbb{R}$ satisfies $g\in {L}^n$, for $t \in \mathbb{R}^1$, we define $f^{-1}(t)=\left\{x \in \Omega: f(x)=t\right\}$, then 
 \begin{equation}
 \int_{\Omega} g |\nabla f |dx=\int_{\mathbb{R}}\left(\int_{f^{-1}(t)} g dH^{n-1}(x)\right) dt.\label{1.2.4}
 \end{equation}
\end{lemma}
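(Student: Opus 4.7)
The plan is to follow the classical strategy for Federer's co-area formula: reduce in $g$, then prove the indicator version first for smooth $f$ and finally for Lipschitz $f$. Concretely, I would proceed in four stages.

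First, I would reduce to the case $g=\chi_E$ for measurable $E\subset\Omega$. Both sides of the identity are linear in $g$, non-negative when $g\ge0$, and well behaved under monotone convergence (the right-hand side by Tonelli, the left by monotone convergence directly). Therefore the general case for $g\in L^n$ follows from the case $g=\chi_E$ by writing $g=g^+-g^-$, approximating by simple functions, and dominating convergence using that $|\nabla f|$ is bounded (as $f$ is Lipschitz). The reduced identity to prove is
\begin{equation*}
\int_{E}|\nabla f|\,dx=\int_{\mathbb{R}}H^{n-1}\bigl(E\cap f^{-1}(t)\bigr)\,dt.
\end{equation*}

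Second, I would handle the smooth case $f\in C^\infty$. By Sard's theorem, $H^1$-a.e. $t\in\mathbb{R}$ is a regular value of $f$, so $f^{-1}(t)$ is a smooth $(n-1)$-submanifold and the critical set $\{|\nabla f|=0\}$ may be removed from both sides. Around each regular point, the implicit function theorem gives a chart $(y_1,\dots,y_{n-1},t)\mapsto x$ in which $f=t$, and a direct Jacobian computation yields $dx=|\nabla f|^{-1}\,dH^{n-1}_t\,dt$, i.e.\ $|\nabla f|\,dx=dH^{n-1}_t\,dt$ as measures. A partition of unity together with Fubini's theorem then gives the formula.

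Third, I would extend from smooth to Lipschitz $f$. By Rademacher's theorem $f$ is differentiable a.e., and I would mollify $f_\varepsilon=f*\phi_\varepsilon$ so that $|\nabla f_\varepsilon|\to|\nabla f|$ in $L^1_{\mathrm{loc}}$. The smooth formula applied to $f_\varepsilon$ gives the identity for each $\varepsilon$; the left-hand sides converge by $L^1$ convergence, while the delicate point is passing to the limit on the right-hand side. I would either (a) invoke the area-formula/Whitney decomposition approach, writing $\Omega$ as a countable union of pieces on which $f$ is bi-Lipschitz onto a Lipschitz graph and applying the area formula directly, or (b) combine Fatou's lemma in $t$ with a matching upper bound derived from the smooth case to obtain equality. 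Finally I would unwind Stage 1 to conclude for arbitrary $g\in L^n$.

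The main obstacle is the third stage: the passage from smooth to Lipschitz $f$ requires controlling $H^{n-1}\bigl(E\cap f_\varepsilon^{-1}(t)\bigr)$ as $\varepsilon\to0$, and semi-continuity alone only yields one inequality. The clean route is to bypass approximation altogether and invoke the area formula on bi-Lipschitz pieces, which is precisely the content of Federer's original argument. Given that the result is a standard tool of geometric measure theory, in the paper's context one would simply cite Evans-Gariepy or Federer rather than reproduce the proof.
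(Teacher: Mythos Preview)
The paper does not prove this lemma at all: it is stated in the preliminaries section as Federer's co-area formula, with no proof or even a citation attached directly to it (though Evans--Gariepy is cited for the neighboring $BV$ lemmas). Your final remark is exactly right --- in the paper's context this is a standard tool to be quoted, not re-proved.

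As for the outline itself, it is a correct sketch of the classical argument. The one place to be careful is Stage~3: the mollification route (b) is genuinely problematic for the reason you identify --- there is no good semi-continuity for $H^{n-1}(E\cap f_\varepsilon^{-1}(t))$ in the direction you need --- so option (a), the Whitney/Lusin decomposition into $C^1$ pieces followed by the area formula, is the one that actually works and is what Federer and Evans--Gariepy do. Had the paper required a proof, that would be the route to take.
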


\subsection{Minimal connections \label{subsection 2.2}}
In this subsection, we describe the fundamental properties of minimal orbits, which play crucial roles in estimating the upper bounds of $\mathbf{E}_\varepsilon$.

The following function $\overline{F}(\lambda)$ is defined to analyze minimal connections: 
\begin{equation}
\overline{F}(\lambda)= \begin{cases}f\left(\left(\frac{d_{N}}{2}+\lambda\right)^{2}\right) & \text { if } \lambda \leq 0, \\
f\left(\left(\frac{d_{N}}{2}-\lambda\right)^{2}\right)& \text { if } \lambda \geq 0.\end{cases}\label{1.2.6}
\end{equation} 
We now review key properties of the associated, scalar-valued minimal connection problem:
\begin{equation}
c_{0}^{\overline{F}}:=\min \left\{\int_{\mathbb{R}}\left(\left|\zeta^{\prime}(t)\right|^{2}+\overline{F}(\zeta(t))\right) d t: \zeta \in H^{1}(\mathbb{R}), \zeta(\pm \infty)= \pm \frac{d_{N}}{2}\right\}, \label{1.2.7}
\end{equation}
where $\overline{F}: \mathbb{R} \to \mathbb{R}_{+}$ is the even function induced from $F$ which defined by (\ref{1.1.401}). The following properties hold, as derived in \cite{fonseca1989gradient, sternberg1988effect}:
\begin{itemize}
  \item[(1)] It holds that
\begin{equation}
c_{0}^{\overline{F}}=4 \int_{0}^{\frac{d_{N}}{2}} \sqrt{\overline{F}(\lambda)} d \lambda \quad\left(=4 \int_{0}^{\frac{d_{N}}{2}} \sqrt{f\left(\lambda^{2}\right)} d \lambda\right).\label{1.2.8}
\end{equation}

\item[(2)] There exists a minimizer
\begin{equation}
 \alpha \in C^{\infty}\left(\mathbb{R}, \left(-\frac{d_{N}}{2}, \frac{d_{N}}{2}\right)\right)\label{1.2.9}
\end{equation}
of (\ref{1.2.7}) that is odd and strictly monotone increasing, and satisfies
\begin{equation}
 \begin{aligned}
-\alpha^{\prime \prime}(t)+\frac{1}{2} \overline{F}^{\prime}(\alpha(t))=0, \quad t \in \mathbb{R}, \quad \alpha(\pm \infty)= \pm \frac{d_{N}}{2}, \\
c_{0}^{\overline{F}}=\int_{\mathbb{R}}\left(\left|\alpha^{\prime}(t)\right|^{2}+\overline{F}(\alpha(t))\right) d t, \\
\alpha^{\prime}(t)=\sqrt{\overline{F}(\alpha(t))} \quad \forall t \in \mathbb{R}, \\
\left|\alpha^{\prime}(t)\right|+\left|\alpha(t)+\frac{d_{N}}{2}\right| \leq C_{1} e^{C_{2} t} \quad \text { as } t \to-\infty, \\
\left|\alpha^{\prime}(t)\right|+\left|\alpha(t)-\frac{d_{N}}{2}\right| \leq C_{3} e^{-C_{4} t} \quad \text { as } t \to+\infty, \label{1.2.10}
\end{aligned}
\end{equation}
for some $C_{i}>0(1 \leq i \leq 4)$.
\end{itemize}

The following result is established in \cite{lin2012phase}:
\begin{lemma}[\cite{lin2012phase}, Theorem 2.1]
\begin{equation}
 \begin{aligned}
c_0^F=c_{0}^{\overline{F}} & =4 \int_{0}^{\frac{d_{N}}{2}} \sqrt{f\left(\lambda^{2}\right)} d \lambda.\\
c_0^F= & c^{F}\left(p^{+}, p^{-}\right) \quad \text { for } p^{ \pm} \in N^{ \pm} \Longleftrightarrow p^{ \pm} \in M^{ \pm} \text { and }\left|p^{+}-p^{-}\right|=d_{N}, \label{1.2.13}
\end{aligned}
\end{equation}
and the corresponding minimal connecting orbit $\gamma \in H^{1}\left(\mathbb{R}, \mathbb{R}^{k}\right)$ attaining $c_0^F$ is 
\begin{equation}
\gamma(t)=\frac{p^{+}+p^{-}}{2}+\alpha(t) \frac{p^{+}-p^{-}}{\left|p^{+}-p^{-}\right|} \quad t \in \mathbb{R}, \label{1.2.14}
\end{equation}
where $\alpha \in H^{1}(\mathbb{R})$, with $\alpha(\pm \infty)= \pm \frac{d_{N}}{2}$, is a solution to the associated scalar-valued minimal connection problem (\ref{1.2.7}).
\label{minimal connection}
\end{lemma}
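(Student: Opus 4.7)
The plan is to establish $c_0^F = c_0^{\overline{F}}$ by matching upper and lower bounds, with the central observation that the multi-dimensional minimal connection problem reduces to the one-dimensional problem for $\overline{F}$ via the distance-to-$N$ function.

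First I would carry out the $1$D analysis to compute $c_0^{\overline{F}}$. Since $\overline{F}$ is a continuous even double-well potential on $\mathbb{R}$ with wells at $\pm d_N/2$, a monotone minimizer $\alpha$ of (\ref{1.2.7}) exists by the direct method and satisfies the Euler--Lagrange equation $-\alpha''+\tfrac{1}{2}\overline{F}'(\alpha)=0$. Multiplying by $\alpha'$ and using $\alpha'(\pm\infty)=0$ yields the first integral $|\alpha'|^2 = \overline{F}(\alpha)$, whence
\begin{equation*}
c_0^{\overline{F}} \;=\; \int_{\mathbb{R}}2|\alpha'|\sqrt{\overline{F}(\alpha)}\,dt \;=\; 2\int_{-d_N/2}^{d_N/2}\sqrt{\overline{F}(\lambda)}\,d\lambda \;=\; 4\int_0^{d_N/2}\sqrt{f(\lambda^2)}\,d\lambda,
\end{equation*}
by evenness of $\overline{F}$ and the definition (\ref{1.2.6}); the exponential decay estimates in (\ref{1.2.10}) then follow from the ODE combined with the lower bound $f(t)\geq c_1 t$ near zero from (\ref{H1}).

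For the upper bound $c^F(p^+,p^-) \leq c_0^{\overline{F}}$ when $p^\pm\in M^\pm$ with $|p^+-p^-|=d_N$, I would test with the explicit orbit $\gamma$ defined by (\ref{1.2.14}). A direct computation gives $|\gamma'|^2 = |\alpha'|^2$, so the claim reduces to $d(\gamma(t),N) = d_N/2-|\alpha(t)|$. The upper bound ``$\leq$'' is immediate from $|\gamma(t)-p^\pm| = d_N/2\mp\alpha(t)$; the matching lower bound follows from the minimality of $d_N$ by a triangle-inequality argument: if some $q\in N^{\mp}$ were strictly closer to $\gamma(t)$ than $d_N/2-|\alpha(t)|$, one deduces $|q-p^\pm|<d_N$, contradicting (\ref{d_N define}). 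Substituting back yields $F(\gamma) = \overline{F}(\alpha)$ and hence equality of actions.

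The main obstacle is the lower bound $c^F(p^+,p^-) \geq c_0^{\overline{F}}$ for arbitrary $p^\pm\in N^\pm$ and any competitor $\xi\in H^1(\mathbb{R},\mathbb{R}^k)$. Setting $r(t) := d(\xi(t),N)$, one has $|r'|\leq|\xi'|$ a.e.\ since $d(\cdot,N)$ is $1$-Lipschitz, and AM--GM gives
\begin{equation*}
|\xi'|^2 + F(\xi) \;\geq\; 2|\xi'|\sqrt{f(r^2)} \;\geq\; 2|r'|\sqrt{f(r^2)}.
\end{equation*}
The crucial geometric input is that $r$ attains the value $d_N/2$ somewhere: the triangle inequality applied to nearest-point projections gives $d(\xi(t),N^+)+d(\xi(t),N^-)\geq d_N$ for all $t$, and since $d(\xi(\cdot),N^+)$ is continuous with limits $0$ at $-\infty$ and $\geq d_N$ at $+\infty$, it equals $d_N/2$ at some $t^*$ where the other distance is also $\geq d_N/2$, forcing $r(t^*)\geq d_N/2$. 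Combined with $r(\pm\infty)=0$, every value $s\in(0,d_N/2)$ is attained at least twice, and Federer's coarea formula from Section \ref{section 2.1} yields $\int_{\mathbb{R}}2|r'|\sqrt{f(r^2)}\,dt \geq 4\int_0^{d_N/2}\sqrt{f(s^2)}\,ds = c_0^{\overline{F}}$. Finally, the characterization of minimizers comes from chasing equality cases throughout: equality in AM--GM forces $|\xi'|=\sqrt{F(\xi)}$, equality in $|r'|\leq|\xi'|$ forces $\xi'$ parallel to $\nabla d(\xi,N)$, and equality in the coarea count forces $r$ to be unimodal with maximum exactly $d_N/2$. Together these constraints pin $\xi$ down to a straight segment of length exactly $d_N$ connecting some $p^+\in N^+$ to $p^-\in N^-$, which forces $|p^+-p^-|=d_N$ and hence $p^\pm \in M^\pm$, and $\xi$ coincides with (\ref{1.2.14}) up to translation in $t$.
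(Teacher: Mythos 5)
The paper itself offers no proof of this lemma---it is quoted verbatim from Lin--Pan--Wang and justified solely by the citation---so there is no in-text argument to compare against. That said, your strategy (equipartition $|\alpha'|^2=\overline{F}(\alpha)$ for the scalar problem; the straight-line orbit as a test map for the upper bound together with the verification $d(\gamma(t),N)=\tfrac{d_N}{2}-|\alpha(t)|$ via the minimality of $d_N$; and, for the lower bound, the $1$-Lipschitz projection $r=d(\xi(\cdot),N)$ combined with Young's inequality, the intermediate-value argument showing $r$ reaches $\tfrac{d_N}{2}$, and the one-dimensional coarea count) is precisely the mechanism of the cited proof, and these computations are all sound.

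The genuine gap is in the ``only if'' half of the equivalence. Your argument for it is an equality-case analysis, which presupposes that $c^{F}(p^{+},p^{-})$ is \emph{attained}. But since $N^{\pm}$ are connected, for arbitrary $p^{\pm}\in N^{\pm}$ one can splice the optimal transition between a closest pair $(q^{+},q^{-})$ with paths that slide from $p^{-}$ to $q^{-}$ inside $N^{-}$ and from $q^{+}$ to $p^{+}$ inside $N^{+}$ over longer and longer time intervals; since $F$ vanishes on $N$ and the time axis is unbounded, the sliding contributes arbitrarily small Dirichlet energy, so the infimum $c^{F}(p^{+},p^{-})$ equals $c_0^{F}$ for \emph{every} pair of endpoints. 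Hence the biconditional can only be a statement about attainment of the infimum, which is exactly (and only) what equality-chasing delivers; as written, your proof claims the literal equivalence for the infimum, and that implication is not obtainable this way. Separately, the final rigidity step (``together these constraints pin $\xi$ down to a straight segment'') is asserted rather than proved: one should note that $|r'|=|\xi'|$ forces $\xi'$ to be parallel to $\nabla d(\cdot,N)$ at $\xi(t)$, which makes the nearest-point projection $\Pi(\xi(t))$ locally constant, so that $\xi$ consists of two normal segments of length $\tfrac{d_N}{2}$ each (one leaving $p^{-}$, one arriving at $p^{+}$) meeting at the level $r=\tfrac{d_N}{2}$; the triangle inequality then gives $|p^{+}-p^{-}|\le d_N$, and since $|p^{+}-p^{-}|\ge d_N$ always, the two segments are collinear, $|p^{+}-p^{-}|=d_N$, and $p^{\pm}\in M^{\pm}$.
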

Notice that potential functions satisfying (\ref{1.1.401}) are functions of distance to the target manifold. Lemma \ref{minimal connection} tells us that the corresponding minimal connecting orbits are straight lines. 

To facilitate estimating the upper bound of $\mathbf{\mathbf{E}_\varepsilon }$, we introduce a truncation function of $\alpha(t)$ that was used in \cite{lin2012phase}.
\begin{lemma}
Define
\begin{equation}
 \alpha_{2 L}(t)= \begin{cases}\frac{t+2 L}{L} \alpha(-L)+\frac{t+L}{L} \frac{d_{N}}{2} & \text { if }-2 L \leq t \leq-L, \\ \alpha(t)& \text { if }-L \leq t \leq L, \\ \frac{2 L-t}{L} \alpha(L)+\frac{t-L}{L} \frac{d_{N}}{2} & \text { if } L \leq t \leq 2 L, \end{cases}\label{1.2.15}
\end{equation}
where $\alpha \in H^{1}(\mathbb{R})$, with $\alpha(\pm \infty)= \pm \frac{d_{N}}{2}$, is a minimizer of (\ref{1.2.7}). For $\alpha_{2 L}$, we have the following:

(i) $\alpha_{2 L}$ is monotonically increasing, 
$$
-\frac{d_{N}}{2}<\alpha_{2 L}(t)<\frac{d_{N}}{2}, \quad \alpha_{2 L}(\pm 2 L)= \pm \frac{d_{N}}{2}, 
$$
and there exists $C>0$ such that
\begin{equation}
\max _{|t| \leq 2 L}\left(\left|\alpha_{2 L}^{\prime}(t)\right|^{2}+\overline{F}\left(\alpha_{2 L}(t)\right)\right) \leq C. \label{2.1.151}
\end{equation}

(ii) There exists $L_{0}>0$ and $c_{1}, c_{2}>0$ such that for any $L \geq L_{0}$, we have
\begin{equation}
 \begin{aligned}
\int_{-2 L}^{2 L}\left(\left|\alpha_{2 L}^{\prime}(t)\right|^{2}+\overline{F}\left(\alpha_{2 L}(t)\right)\right) d t \leq c_0^F+c_{2} e^{-c_{1} L}.\label{1.2.16}
\end{aligned}
\end{equation}
\label{lemma alpha properties}
\end{lemma}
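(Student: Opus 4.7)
My plan is to verify (i) and (ii) by a direct three-interval analysis, exploiting the key algebraic identity
\[
\alpha_{2L}(t)+\frac{d_N}{2}=\frac{t+2L}{L}\Bigl(\alpha(-L)+\frac{d_N}{2}\Bigr),\qquad -2L\le t\le -L,
\]
together with its symmetric counterpart on $[L,2L]$. These identities transfer the exponential smallness of $\alpha(\mp L)\mp d_N/2$ from (\ref{1.2.10}) uniformly to the lateral intervals. I would fix $L_0$ once and for all so that $C_1 e^{-C_2 L_0}\le\delta_N$, ensuring that the linear bound $f(s)\le c_2 s$ from (\ref{H1}) is in force for every $t\in[-2L,2L]$ as soon as $L\ge L_0$.

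For (i), strict monotonicity of $\alpha_{2L}$ on the middle interval and the strict bound $|\alpha_{2L}|<d_N/2$ there come from (\ref{1.2.9}); on each lateral interval $\alpha_{2L}'$ is the positive constant $L^{-1}(\alpha(-L)+d_N/2)$, respectively $L^{-1}(d_N/2-\alpha(L))$, so $\alpha_{2L}$ is strictly increasing throughout, and $\alpha_{2L}(\pm 2L)=\pm d_N/2$ by direct substitution. The uniform estimate (\ref{2.1.151}) on $[-L,L]$ follows because $|\alpha'|^2+\overline{F}(\alpha)$ is continuous on $\mathbb{R}$ and decays to $0$ at $\pm\infty$ (by (\ref{1.2.10}) together with $f(0)=0$, which is forced by the linear upper bound in (\ref{H1})). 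On the lateral intervals, (\ref{1.2.10}) gives $|\alpha_{2L}'|\le C_1 L^{-1}e^{-C_2 L}$, and the displayed identity combined with the linear upper bound on $f$ gives $\overline{F}(\alpha_{2L})\le c_2 C_1^2 e^{-2C_2 L}$; for $L\ge L_0$ both are bounded by an absolute constant.

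For (ii), I split
\[
\int_{-2L}^{2L}\bigl(|\alpha_{2L}'|^2+\overline{F}(\alpha_{2L})\bigr)\,dt=\int_{-L}^{L}+\int_{-2L}^{-L}+\int_{L}^{2L}.
\]
The middle integral, because the integrand is nonnegative, is bounded by $\int_{\mathbb{R}}(|\alpha'|^2+\overline{F}(\alpha))\,dt=c_0^{\overline{F}}=c_0^F$, the last equality being Lemma \ref{minimal connection}. The pointwise bounds $|\alpha_{2L}'|^2\le C_1^2 L^{-2}e^{-2C_2 L}$ and $\overline{F}(\alpha_{2L})\le c_2 C_1^2 e^{-2C_2 L}$, integrated over an interval of length $L$, give a lateral contribution of order $Le^{-2C_2 L}$, which is absorbed into $c_2 e^{-c_1 L}$ for any $c_1<2C_2$ (readjusting the constant $c_2$ in (\ref{1.2.16})).

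The only technical point is coordinating the two uses of (\ref{1.2.10})---decay of $\alpha'$ for the kinetic bound and decay of $\alpha\mp d_N/2$ for the potential bound via the displayed identity---and choosing $L_0$ so that the linear upper bound on $f$ in (\ref{H1}) is applicable on the lateral intervals. Beyond this, the argument is elementary bookkeeping, and no deeper obstacle is expected.
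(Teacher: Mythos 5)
Your proposal is correct and takes essentially the approach the paper intends: the paper's own proof is only the remark that the properties ``follow directly from (\ref{1.2.10})'' plus a citation of Proposition A.4 in Lin--Pan--Wang, and your three-interval computation --- the affine identity $\alpha_{2L}(t)+\tfrac{d_N}{2}=\tfrac{t+2L}{L}\bigl(\alpha(-L)+\tfrac{d_N}{2}\bigr)$ transferring the exponential decay to the lateral pieces, the choice of $L_0$ so that the linear bound on $f$ from (\ref{H1}) applies there, and the domination of the middle integral by $c_0^F$ --- is exactly the verification being delegated to that reference. I see no gaps.
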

\begin{proof}
 The above properties of $\alpha_{2 L}$ follow directly from the property (\ref{1.2.10}). For a complete proof, refer to (\cite{lin2012phase}, Proposition A.4).
\end{proof}
Throughout this paper, we denote by $C$ a universal constant that is independent of $\varepsilon$, by $n$ the dimension of $\Omega$, and by $k$ the dimension of the wells $N^\pm$.

\section{Proof of Theorem \ref{theorem 1} and \ref{theorem 2}\label{section 3}}

\subsection{Upper bound estimates of \texorpdfstring{$\mathbf{E}_\varepsilon $}{} \label{section 3.1}}

 This subsection focuses on deriving the upper bounds of $\mathbf{E}_\varepsilon $. To begin, we recall a fundamental result, which will serve as a key tool in the analysis (see, \cite{sternberg1988effect}).
\begin{lemma}
 Let $\Omega$ be an open bounded subset of $\mathbb{R}^{n}$ with a smooth boundary. Let A be an open subset of $\mathbb{R}^{n}$ with $C^{2}$, compact, nonempty boundary such that $H^{n-1}(\partial A \cap \partial \Omega)=0$.
 Define the distance function to $\partial A, d: \Omega \to \mathbb{R}^{}$, by
 \begin{equation*}
 {{d}_{\Gamma}(x)}=\begin{cases}
-\operatorname{dist}(x, \partial A) & x \in \Omega \backslash A, \\
\operatorname{dist}(x, \partial A) & x \in A \cap \Omega.
\end{cases}
 \end{equation*}
Then for some $s>0$, $d$ is a $C^{2}$ function in $\{|{{d}_{\Gamma}(x)}|<s\}$ with
\begin{equation*}
|\nabla d|=1. 
\end{equation*}
Furthermore, as $s\to0$, 
\begin{equation}
 H^{n-1}(\{{{d}_{\Gamma}(x)}=s\})=H^{n-1}(\partial A)+O(s). \label{approxite}
\end{equation}
\label{technical lemma 2}

\end{lemma}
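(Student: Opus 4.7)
The plan is to derive (i) from the tubular neighborhood theorem applied to the compact $C^2$ hypersurface $\partial A$, and then to obtain (ii) by parametrizing the level sets of $d_\Gamma$ via the normal exponential map and applying the area formula.

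For (i), fix a continuous unit normal field $\nu$ on $\partial A$ oriented so that $\nu$ points into $A$. Because $\partial A$ is compact and of class $C^2$, the normal map
\begin{equation*}
\Phi(y,t) := y + t\,\nu(y)
\end{equation*}
is a $C^1$ diffeomorphism from $\partial A \times (-s_0, s_0)$ onto an open tubular neighborhood $U$ of $\partial A$ in $\mathbb{R}^n$, for some $s_0 > 0$. On $U$, the identity $d_\Gamma(\Phi(y,t)) = t$ holds by construction, so $d_\Gamma \in C^2(U)$. Differentiating this identity in $t$ at the point $\Phi(y,t)$ gives $\nabla d_\Gamma \cdot \nu(y) = 1$, while differentiating in directions tangent to $\partial A$ shows that the tangential components of $\nabla d_\Gamma$ vanish along $\partial A$; this forces $\nabla d_\Gamma(\Phi(y,t)) = \nu(y)$, and in particular $|\nabla d_\Gamma|\equiv 1$ on $U$. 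Taking $s = s_0$ (shrunk if necessary so that $\{|d_\Gamma| < s\} \subset U \cap \Omega$) proves (i).

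For (ii), fix $|s| < s_0$. The level set $\{d_\Gamma = s\}$ inside $\Omega$ is the image under $\Phi(\cdot , s)$ of the set $A_s := \{y \in \partial A : \Phi(y,s) \in \Omega\}$. The tangential Jacobian of $\Phi(\cdot , s)$ at $y \in \partial A$ equals $J_s(y) = |\det(I_{n-1} - sS(y))|$, where $S(y)$ is the shape operator of $\partial A$ at $y$. Since $\partial A$ is a $C^2$ compact hypersurface, the principal curvatures are uniformly bounded, and expanding the determinant gives $J_s(y) = 1 + O(s)$ uniformly in $y$. The area formula then yields
\begin{equation*}
H^{n-1}\bigl(\{d_\Gamma = s\}\bigr) = \int_{A_s} J_s(y)\, dH^{n-1}(y) = H^{n-1}(A_s) + O(s).
\end{equation*}

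The remaining task is to show $H^{n-1}(A_s) = H^{n-1}(\partial A) + O(s)$, and this is the main obstacle: the symmetric difference between $A_s$ and $\partial A \cap \Omega$ is concentrated in a strip of width comparable to $s$ around $\partial A \cap \partial \Omega$. The hypothesis $H^{n-1}(\partial A \cap \partial \Omega) = 0$ combined with the smoothness of $\partial \Omega$ and of $\partial A$ controls this strip: applying the coarea formula to the (Lipschitz) function $y \mapsto d(y, \partial \Omega)$ on $\partial A$, one bounds the measure of $\{y \in \partial A : d(y, \partial \Omega) \le Cs\}$ by $Cs \cdot H^{n-2}(\partial A \cap \partial \Omega)$ up to a higher order error, which is $O(s)$. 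Combining this with the previous display gives the claimed asymptotic $H^{n-1}(\{d_\Gamma = s\}) = H^{n-1}(\partial A) + O(s)$, completing the proof of (ii).
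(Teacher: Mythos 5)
First, note that the paper does not prove this lemma at all: it is quoted as a known result from Sternberg's work, so there is no in-paper argument to compare yours against. Judged on its own, your Part (i) and the Jacobian computation in Part (ii) are correct and standard (for Part (i) you should really conclude $d_\Gamma\in C^1$ from $d_\Gamma\circ\Phi=t$ and then upgrade to $C^2$ via the identity $\nabla d_\Gamma=\nu\circ\Pi$ with $\Pi$ the $C^1$ nearest-point projection, but this is cosmetic). The genuine gap is the final boundary-strip estimate, which you correctly identify as ``the main obstacle'' but do not actually close. The coarea formula applied to $g(y)=d(y,\partial\Omega)$ on $\partial A$ gives
$\int_{\{g\le Cs\}}|\nabla^{\partial A}g|\,dH^{n-1}=\int_0^{Cs}H^{n-2}(g^{-1}(t))\,dt$,
and to convert this into a bound on $H^{n-1}(\{g\le Cs\})$ you must bound the tangential gradient $|\nabla^{\partial A}g|$ from below. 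That fails exactly where $\partial A$ meets $\partial\Omega$ tangentially, and nothing in the hypotheses excludes tangency; at a tangential contact point the strip $\{y\in\partial A: d(y,\partial\Omega)\le Cs\}$ has width of order $\sqrt{s}$, so its measure is in general only $O(\sqrt{s})$, not $O(s)$. Moreover $H^{n-2}(\partial A\cap\partial\Omega)$ is not assumed finite — the hypothesis only gives $H^{n-1}(\partial A\cap\partial\Omega)=0$ — so the quantity your bound is expressed in terms of need not even make sense. Under the stated hypotheses alone, dominated convergence yields only $H^{n-1}(A_s)=H^{n-1}(\partial A)+o(1)$, and extracting the rate $O(s)$ requires some quantitative transversality or regularity of $\partial A\cap\partial\Omega$ that you would have to either assume or prove for the specific approximating sets to which the lemma is applied.

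Two further remarks. The one-sided inequality $H^{n-1}(\{d_\Gamma=s\})\le H^{n-1}(\partial A)(1+O(s))$, which is what the paper actually uses in its estimate of the transition-layer energy, follows immediately from your area-formula step together with the trivial inclusion $A_s\subset\partial A$ — no boundary-strip analysis is needed for that direction. For the full two-sided asymptotic, you should also note that the statement implicitly requires $H^{n-1}(\partial A\setminus\overline{\Omega})=0$: if a portion of $\partial A$ of positive measure lies strictly outside $\overline{\Omega}$, then $A_s$ omits it for all small $s$ and the claimed equality with $H^{n-1}(\partial A)$ on the right-hand side is false, independently of your strip estimate.
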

We divide the process into two parts: (1) rough bounds and (2) refined bounds.
\subsubsection{Rough upper bound estimates of \texorpdfstring{$\mathbf{E}_\varepsilon $}{} }
\quad In this subsubsection, we construct comparison maps that satisfy the mass constraint. In construction, we utilize the following Lemma \ref{technical lemma 1}, a classical approximation technique to approximate the minimizer of (\ref{minimizers}). We also employ Lemma \ref{technical lemma 2} to construct the phase transition layer. This argument is classic (see \cite{sternberg1988effect}). 
\begin{lemma}{(Technical lemma)}
 Let $\Omega$ be an open and bounded subset of $\mathbb{R}^{n}$ with a smooth boundary. Let $A \subset \Omega$ be a finite perimeter set in $\Omega$ with $0<|A|<|\Omega|$. Then, there exists a sequence of open sets $\left\{A_{k}\right\}$ satisfying the following five conditions:
\begin{align*}
& 1.\text{ }\partial A_{k} \cap \Omega \in C^{2}, \\
& 2.\text{ }\left|\left(A_{k} \cap \Omega\right) \Delta A\right| \to 0 \quad\text{as} 
 \quad k \to \infty, \\
& 3.\text{ }\operatorname{Per}_{\Omega} A_{k} \to \operatorname{Per}_{\Omega} A \quad\text{as}\quad k \to \infty, \\
&4.\text{ } H^{n-1}\left(\partial A_{k} \cap \partial \Omega\right)=0, \\
&5.\text{ }\left|A_{k} \cap \Omega\right|=|A|.
\end{align*}\label{technical lemma 1}
\end{lemma}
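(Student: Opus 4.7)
The plan is to reduce to a smooth approximation of $\chi_A$ in $BV(\Omega)$, extract open sets with $C^{2}$ boundary as superlevel sets via the coarea formula plus Sard's theorem, and then enforce the exact volume condition 5 by a small-ball correction; all the required ingredients are classical.

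First, I would invoke the standard density theorem for $BV$ to produce a sequence $u_k\in C^{\infty}(\Omega)\cap BV(\Omega)$, truncated to take values in $[0,1]$, with $u_k\to\chi_A$ in $L^1(\Omega)$ and $\int_\Omega|\nabla u_k|\,dx\to \operatorname{Per}_\Omega A$. Federer's coarea formula (\ref{1.2.4}) together with the layer-cake identity then give
\begin{equation*}
\int_\Omega|\nabla u_k|\,dx=\int_0^1\operatorname{Per}_\Omega\{u_k>t\}\,dt,\qquad \int_\Omega|u_k-\chi_A|\,dx=\int_0^1|\{u_k>t\}\Delta A|\,dt.
\end{equation*}
The second identity forces $\{u_k>t\}\to A$ in $L^1(\Omega)$ for a.e.\ $t$ along a subsequence, so Lemma \ref{lemma infcontinuous} yields $\liminf_k\operatorname{Per}_\Omega\{u_k>t\}\ge\operatorname{Per}_\Omega A$ for a.e.\ such $t$; matching this lower bound against the coarea average, which converges to $\operatorname{Per}_\Omega A$, a measure-theoretic pigeonhole selection produces $t_k\in(0,1)$ with $|\{u_k>t_k\}\Delta A|\to 0$ and $\operatorname{Per}_\Omega\{u_k>t_k\}\to\operatorname{Per}_\Omega A$. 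Applying Sard's theorem to $u_k$ and to its smooth restriction $u_k|_{\partial\Omega}$, I restrict $t_k$ further to be a regular value of both, which makes $\partial\{u_k>t_k\}\cap\Omega$ a $C^{\infty}$ hypersurface (condition 1) and $\partial\{u_k>t_k\}\cap\partial\Omega$ a smooth $(n-2)$-dimensional submanifold, hence $H^{n-1}$-null (condition 4). Setting $A_k':=\{u_k>t_k\}$ secures conditions 1--4.

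The defect $v_k:=|A_k'\cap\Omega|-|A|$ tends to zero, so condition 5 is recovered by a small-ball correction. Since $0<|A|<|\Omega|$ and $A_k'\to A$ in $L^1$, for large $k$ both $A_k'\cap\Omega$ and $\Omega\setminus\overline{A_k'}$ contain open balls at positive distance from $\partial\Omega\cup\partial A_k'$; I choose a ball $B_k$ of volume $|v_k|$ inside the appropriate one, disjoint from both boundaries, and set $A_k:=A_k'\setminus\overline{B_k}$ if $v_k>0$ and $A_k:=A_k'\cup B_k$ if $v_k<0$. Disjointness of $B_k$ from $\partial A_k'\cup\partial\Omega$ preserves conditions 1 and 4; $|A_k\Delta A_k'|=|v_k|\to 0$ and $\operatorname{Per}_\Omega A_k\le\operatorname{Per}_\Omega A_k'+\operatorname{Per}(B_k)\to\operatorname{Per}_\Omega A$ preserve conditions 2 and 3; and condition 5 holds on the nose. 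The hard part will be the level-selection step: the coarea formula controls only the $t$-average of $\operatorname{Per}_\Omega\{u_k>t\}$, so to extract a single $t_k$ simultaneously delivering $L^1$ convergence, perimeter convergence, and the two Sard regularities, one has to combine the average identity with a slice-by-slice lower semi-continuity bound and then choose the level generically.
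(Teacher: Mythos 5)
The paper does not prove this lemma itself; it is quoted as a classical approximation result (Modica/Sternberg), and your overall strategy --- mollify $\chi_A$, select a good regular level via the coarea formula, Chebyshev and Sard, then correct the volume --- is exactly the classical route. The level-selection step, which you correctly flag as the delicate point, can indeed be closed: from $\int_0^1|\{u_k>t\}\Delta A|\,dt\to 0$ and the slicewise lower semicontinuity one deduces $\operatorname{Per}_\Omega\{u_k>t\}\to\operatorname{Per}_\Omega A$ in $L^1(0,1)$, so a level $t_k$ with all the required properties exists in any set of positive measure of regular values. (One small point you should make explicit: to apply Sard to $u_k|_{\partial\Omega}$ and to speak of $\partial A_k\cap\partial\Omega$ at all, the $u_k$ must be smooth on a neighborhood of $\overline{\Omega}$, so you should mollify a $BV(\mathbb{R}^n)$ extension of $\chi_A$ chosen with $|D\tilde\chi|(\partial\Omega)=0$, which the smoothness of $\partial\Omega$ permits; mollification inside $\Omega$ alone does not define $u_k$ on $\partial\Omega$.)

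The genuine gap is in the small-ball volume correction. You need a ball of volume $|v_k|$ whose closure lies in the open set $A_k'\cap\Omega$ (resp.\ $\Omega\setminus\overline{A_k'}$) and avoids $\partial A_k'\cup\partial\Omega$, but nothing guarantees this: an open set of measure close to $|A|$ can have arbitrarily small inradius. Concretely, if $u_k=\tilde\chi*\phi_{\epsilon_k}$, the largest ball one can guarantee inside $\{u_k>t_k\}$ (around a density point of $A$) has radius only $O(\epsilon_k)$, hence volume $O(\epsilon_k^n)$, while $|v_k|=\bigl||A_k'\cap\Omega|-|A|\bigr|$ is controlled only by $\|u_k-\chi_A\|_{L^1}$ and may vastly exceed $\epsilon_k^n$; so the required ball need not fit. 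Centering a ball at a fixed density point of $A$ and removing $A_k'\cap \overline{B}$ does not help either, since the resulting boundary has corners and violates condition 1. The standard repair is a volume-fixing variation: pick $X\in C_c^\infty(\Omega;\mathbb{R}^n)$ with $\int_A\operatorname{div}X\,dx\neq 0$ (possible since $\operatorname{Per}_\Omega A>0$), let $\Phi_s$ be its flow, note that $s\mapsto|\Phi_s(A_k')\cap\Omega|$ is smooth with derivative at $0$ converging to $\int_A\operatorname{div}X$, and solve $|\Phi_{s_k}(A_k')\cap\Omega|=|A|$ with $s_k\to 0$. Setting $A_k:=\Phi_{s_k}(A_k')$ preserves conditions 1 and 4 (the flow is a diffeomorphism equal to the identity near $\partial\Omega$), perturbs the symmetric difference and the perimeter by $o(1)$ via the area formula, and gives condition 5 exactly. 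With that substitution your proof is complete.
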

Let us start with the proof of Proposition \ref{upper}:

\begin{proposition}
 Under the same assumptions of Theorem \ref{theorem 1}, we have
 \begin{equation}
\limsup_{\varepsilon \to 0} \varepsilon \mathbf{E}_\varepsilon\leq c_0^F I_{\Omega}({\sigma_m}).
\end{equation}
\label{upper}

\end{proposition}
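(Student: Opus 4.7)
The plan is to build a sequence of admissible test maps whose scaled energies converge to $c_0^F\,I_\Omega(\sigma_m)$, by placing a one-dimensional phase-transition profile---taken from the minimal connecting orbit of Lemma \ref{minimal connection}---as a thin tubular layer across the boundary of a nearly isoperimetric set of volume $\sigma_m$, and then adjusting the values inside $N^+$ on a region of order one to enforce the exact mass constraint.

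\textbf{Step 1 (building blocks).} Pick $p^{\pm}\in M^{\pm}$ with $|p^{+}-p^{-}|=d_N$, so that the straight segment $\gamma(t)=\tfrac{p^{+}+p^{-}}{2}+\alpha(t)\tfrac{p^{+}-p^{-}}{d_N}$ realizes $c_0^F$. For a large parameter $L$, replace $\alpha$ by the truncation $\alpha_{2L}$ from Lemma \ref{lemma alpha properties} and set $\gamma_{2L}$ analogously, so $\gamma_{2L}(\pm 2L)=p^{\pm}$. Select a minimizer $E\subset\Omega$ of $I_\Omega(\sigma_m)$ (which exists by Lemmas \ref{lemma infcontinuous} and \ref{lemma compact}), and approximate it by smooth open sets $A_k$ via Lemma \ref{technical lemma 1} with $|A_k\cap\Omega|=\sigma_m$, $\operatorname{Per}_\Omega A_k\to I_\Omega(\sigma_m)$, and $H^{n-1}(\partial A_k\cap\partial\Omega)=0$. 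On a tubular neighborhood of $\partial A_k\cap\Omega$, Lemma \ref{technical lemma 2} provides a $C^2$ signed distance $d_{\Gamma_k}$ satisfying $|\nabla d_{\Gamma_k}|=1$ and $H^{n-1}(\{d_{\Gamma_k}=s\})=H^{n-1}(\partial A_k\cap\Omega)+O(s)$.

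\textbf{Step 2 (energy estimate).} Define
$$u_\varepsilon^{(0)}(x)=\begin{cases}p^{+} & \text{if } d_{\Gamma_k}(x)\geq 2L\varepsilon,\\ \gamma_{2L}(d_{\Gamma_k}(x)/\varepsilon) & \text{if } |d_{\Gamma_k}(x)|\leq 2L\varepsilon,\\ p^{-} & \text{if } d_{\Gamma_k}(x)\leq -2L\varepsilon.\end{cases}$$
Because $\gamma_{2L}$ lies on the segment $[p^{-},p^{+}]$ and is projected onto $N$ at one of the endpoints $p^{\pm}$, one has $F(\gamma_{2L}(t))=\overline{F}(\alpha_{2L}(t))$ and $|\gamma_{2L}'|=|\alpha_{2L}'|$. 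Combining $|\nabla d_{\Gamma_k}|=1$ with Federer's co-area formula (\ref{1.2.4}) and the change of variable $t=s/\varepsilon$ produces
$$\varepsilon\,\mathbf{E}_\varepsilon(u_\varepsilon^{(0)})=\int_{-2L}^{2L}\bigl(|\alpha_{2L}'(t)|^2+\overline{F}(\alpha_{2L}(t))\bigr)\,H^{n-1}(\{d_{\Gamma_k}=\varepsilon t\})\,dt,$$
and applying (\ref{approxite}) together with (\ref{1.2.16}) gives $\varepsilon\,\mathbf{E}_\varepsilon(u_\varepsilon^{(0)})\leq (c_0^F+c_2 e^{-c_1 L})\bigl(H^{n-1}(\partial A_k\cap\Omega)+O(L\varepsilon)\bigr)$.

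\textbf{Step 3 (mass correction, the main obstacle, and conclusion).} The map $u_\varepsilon^{(0)}$ is not yet admissible since $\int_\Omega\rho(u_\varepsilon^{(0)})\,dx=\rho(p^{+})\sigma_m+O(\varepsilon)$, which need not equal $m$. Since $\sigma_m\in[m/m_2,m/m_1]$ forces $m/\sigma_m\in[m_1,m_2]=\rho(N^{+})$, by continuity of $\rho$ and the connectedness of $N^{+}$ one can pick $q^{+}\in N^{+}$ with $\rho(q^{+})=m/\sigma_m$ and a smooth path $\Phi:[0,1]\to N^{+}$ from $p^{+}$ to $q^{+}$. In a fixed-radius ball $B_r\subset A_k\setminus\{|d_{\Gamma_k}|\leq 2L\varepsilon\}$ (available once $\varepsilon$ is small enough), replace $u_\varepsilon^{(0)}\equiv p^{+}$ by a smooth interpolation along $\Phi$ on a thin shell, with value $q^{+}$ on an inner sub-ball whose radius is tuned so that $\int_\Omega\rho(u_\varepsilon)\,dx=m$ exactly. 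Because the correction stays in $N^{+}$, one has $F\equiv 0$ on its support and the gradient energy is bounded by a constant depending only on $\Phi$ and $r$, contributing $O(\varepsilon)$ to $\varepsilon\,\mathbf{E}_\varepsilon$; the delicate point is that $\Phi$ must be taken inside $N^{+}$ rather than as a straight segment in $\mathbb{R}^k$, which is precisely why connectedness of $N^{+}$ is used. Sending $\varepsilon\to 0$, then $L\to\infty$, and finally $k\to\infty$, and using $\operatorname{Per}_\Omega A_k\to I_\Omega(\sigma_m)$, we conclude $\limsup_{\varepsilon\to 0}\varepsilon\,\mathbf{E}_\varepsilon\leq c_0^F\,I_\Omega(\sigma_m)$.
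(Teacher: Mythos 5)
Your Steps 1--2 match the paper's construction of the transition layer, and your use of the truncated profile $\alpha_{2L}$, the co-area formula, and the smooth approximation $A_k$ is sound. The gap is in Step 3, the mass correction. With the bulk value equal to $p^{+}$ on essentially all of $A_k$, you have $\int_\Omega\rho(u_\varepsilon^{(0)})\,dx=\rho(p^{+})\sigma_m+O(L\varepsilon)$, while the target is $m=\rho(q^{+})\sigma_m$ with $\rho(q^{+})=m/\sigma_m$. The discrepancy is $(\rho(q^{+})-\rho(p^{+}))\sigma_m$, an order-one quantity proportional to the \emph{full} volume $\sigma_m$. Replacing $p^{+}$ by $q^{+}$ only on a sub-ball $B_{r'}$ of a fixed ball $B_r\subset A_k$ changes the mass by at most $|\rho(q^{+})-\rho(p^{+})|\,\mu(B_r)$, and since $\mu(B_r)<\mu(A_k)=\sigma_m$ strictly, the target $m$ lies outside the interval swept by your intermediate-value argument whenever $\rho(p^{+})\neq m/\sigma_m$ --- which is the generic case, since $p^{+}$ is dictated by the geometry of $N$ while $m/\sigma_m$ is dictated by the isoperimetric profile. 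So the constructed map is not admissible and the upper bound does not follow.

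The repair is essentially what the paper does: make the \emph{bulk} value on the $\{d_{\Gamma_k}>0\}$ side equal to $q$ with $\rho(q)=m/\mu\{d_{\Gamma_k}\geq 0\}$, inserting the path $\xi\subset N^{+}$ from $p^{+}$ to $q$ in a thin shell of width $O(\varepsilon^{1/2})$ adjacent to the transition layer. This shell costs gradient energy $O(\varepsilon^{-1/2})$, negligible after multiplying by $\varepsilon$, and the potential term vanishes there because the path stays in $N^{+}$. The remaining mass discrepancy is then only $O(\varepsilon^{1/2})$ and is removed exactly by translating the whole profile by $\tau_\varepsilon\varepsilon^{1/2}$ in the normal direction and applying the intermediate value theorem to $\tau_\varepsilon\mapsto\int_\Omega\rho(v_\varepsilon)\,dx$. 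Your observation that $m/\sigma_m\in[m_1,m_2]=\rho(N^{+})$ and that connectedness of $N^{+}$ yields the path are exactly the right ingredients; they must be used to set the bulk value, not to build a local patch.
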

\begin{proof}
\text{{}}

$Step$ 1. Let $A$ be a set of finite perimeter in $\Omega$ which attains the minimum $I_{\Omega}({\sigma_m})$ of (\ref{minimizers}) at $\sigma_m$. This implies
$$
I_{\Omega}({\sigma_m})=\operatorname{Per}_{\Omega}A.
$$
Define $B=\Omega\backslash A$. For $\forall \delta>0$, by Lemma \ref{technical lemma 1}, there exists an open set $O\subset\mathbb{R}^n$ such that
\begin{equation}
 \begin{aligned}
 & 1.\text{ } \partial O\cap \Omega \in C^{2}, \\
&2. \text{ }\left|H^{(
n-1)}(\partial O)-\operatorname{Per}_{\Omega}A\right|<\delta, \\
&3.\text{ }\left|O \cap \Omega\right|=|A|,\\
&4.\text{ } H^{n-1}\left(\partial O \cap \partial \Omega\right)=0.\end{aligned}
\end{equation}
Let $\Gamma:=\partial O \cap \Omega$, define the signed function $d: \Omega \to \mathbb{R}$ with respect to $\Gamma$ as
$$
{{d}_{\Gamma}(x)}=\left\{\begin{aligned}
d(x, \Gamma) \quad& x \in O\cap\Omega, \\
-d(x, \Gamma) \quad& x \in \Omega\backslash O. 
\end{aligned}\right.
$$
By Lemma \ref{technical lemma 2}, $d_{\Gamma}$ is a $C^2$ function in an open neighborhood of $\Gamma$. We now construct comparison maps $v_\varepsilon$. First, note that $\rho$ is Lipschitz continuous on $N^+$, and define $\rho_1:=\frac{m}{\mu\{{{d}_{\Gamma}(x)}\geq 0\}}\in[m_1, m_2]$. Thus, there exists a point $q\in N^+$ such that $\rho(q)=\rho_1$.

Since $N^+$ is connected, there exists $\xi:[0, 1]\to N^+\in H^1$ such that $\xi(0)=p^+$, $\xi(1)=q$. Let $(p^+, p ^-)\in M^+ \times M^- $ such that $|p^+-p ^-| =d_N$. Define the function $f_{\varepsilon}$ as follows:
 $$f_{\varepsilon}(t)=\begin{cases}p^-& \text { if } t\leq-\varepsilon^{\frac{1}{2}}, \\
\frac{p^{+}+p^{-}}{2}+\alpha_{{\varepsilon}^{-\frac{1}{2}}}\left(\frac{t}{\varepsilon}\right) \frac{p^{+}-p^{-}}{\left|p^{+}-p^{-}\right|} & \text { if } -\varepsilon^{\frac{1}{2}}\leq t\leq\varepsilon^{\frac{1}{2}}, \\
\xi({\varepsilon}^{-\frac{1}{2}}t-1)& \text { if } \varepsilon^{\frac{1}{2}}\leq t \leq 2\varepsilon^{\frac{1}{2}}, \\
q & \text { if } 2\varepsilon^{\frac{1}{2}}\leq t,
\end{cases} $$
where $\alpha_{{\varepsilon}^{-\frac{1}{2}}}$ is defined in Lemma \ref{lemma alpha properties}. Finally, define
$$
v_\varepsilon(x)=f_{\varepsilon}\left({{{d}_{\Gamma}(x)}-\tau_\varepsilon\varepsilon^{\frac{1}{2}}}\right), 
$$
where $\tau_\varepsilon$ will be chosen to satisfy the mass constraint  $\int_{\Omega}\rho\left(v_{\varepsilon}\right) dx=m $. By construction, it follows that $ v_\varepsilon\in H^1(\Omega)$.

$Step$ 2. To determine $\tau_\varepsilon$, we first observe the following. For $ \rho\left(v_\varepsilon\right)$, we have
\begin{equation}
 \rho\left(v_\varepsilon\right)= \begin{cases}0 & \text { on } \left\{ {{d}_{\Gamma}(x)}-\tau_\varepsilon\varepsilon^{\frac{1}{2}}\leq-\varepsilon^{\frac{1}{2}}\right\}, \\ \rho_1 & \text { on }\left\{ {{d}_{\Gamma}(x)}-\tau_\varepsilon\varepsilon^{\frac{1}{2}}\geq2\varepsilon^{\frac{1}{2}}\right\}.\end{cases}\label{v vaule}
\end{equation}
 Since $\partial O$ is $C^2$, by Lemma \ref{approxite}, we have
\begin{equation}
\begin{aligned}
&\mu\left\{ {{d}_{\Gamma}(x)}\geq(\tau_\varepsilon+2)\varepsilon^{\frac{1}{2}}\right\}\\
=&\mu\{{{d}_{\Gamma}(x)}\geq 0\}+H^{n-1}( \{{{d}_{\Gamma}(x)}=0\})(-\tau_\varepsilon-2)\varepsilon^{\frac{1}{2}}+O(\tau_\varepsilon^2\varepsilon)+O(\varepsilon), 
\end{aligned}\label{right area volum}
\end{equation}
and
\begin{equation}
  \begin{aligned}
     &\mu\left\{ (\tau_\varepsilon+2)\varepsilon^{\frac{1}{2}}\geq {{d}_{\Gamma}(x)}\geq(\tau_\varepsilon-1)\varepsilon^{\frac{1}{2}}\right\}\\
 =&3H^{n-1}( \{{{d}_{\Gamma}(x)}=0\})\varepsilon^{\frac{1}{2}}+O(\tau_\varepsilon^2\varepsilon)+O(\varepsilon).
  \end{aligned}\label{middle area volum}
\end{equation}
From the definition of $v_\varepsilon$ and $\rho\in Lip(\mathbb{R}^k)$, there exists $C_0>0$ such that
\begin{equation}
\left\|\rho(v_\varepsilon(\cdot))\right\|_{\infty} \leq C_{0}. \label{v bound}
\end{equation}
 Thus, combining (\ref{v vaule}), (\ref{right area volum}), (\ref{middle area volum}) and (\ref{v bound}) with $\rho_1\mu\{d_\Gamma(x)\geq 0\}=m$, we obtain
\begin{equation} \begin{aligned}
  &\int_{\Omega}\rho\left(v_\varepsilon\right)dx-m \\
  \geq& \rho_1\mu\left\{ {{d}_{\Gamma}(x)}\geq(\tau_\varepsilon+2)\varepsilon^{\frac{1}{2}}\right\}\\
  &+\int_{\left\{ (\tau_\varepsilon+2)\varepsilon^{\frac{1}{2}}\geq {{d}_{\Gamma} (x)}\geq(\tau_\varepsilon-1)\varepsilon^{\frac{1}{2}}\right\}} \rho\left(v_\varepsilon\right)dx-\rho_1\mu\{d_\Gamma(x)\geq 0\} \\ 
  \geq&H^{n-1}( \{{{d}_{\Gamma}(x)}=0\})((-\tau_\varepsilon-2)\rho_1-3C_0)\varepsilon^{\frac{1}{2}}+O(\tau_\varepsilon^2\varepsilon)+O(\varepsilon).\label{3.15}
\end{aligned}
\end{equation}
Similar to the above argument,
\begin{equation}
\int_{\Omega}\rho\left(v_\varepsilon\right)dx-m\leq H^{n-1}( \{{{d}_{\Gamma}(x)}=0\})\left((-\tau_\varepsilon-2)\rho_1+3C_0\right)\varepsilon^{\frac{1}{2}}+O(\tau_\varepsilon^2\varepsilon)+O(\varepsilon).\label{3.16}
\end{equation}
From the above argument, there exists $\tau_{1}>0$ (independent of $\varepsilon$) by further decreasing $\varepsilon$ such that the right-hand side in (\ref{3.15}) is positive for $\tau_\varepsilon \leq-\tau_{1}$, while the right-hand side of (\ref{3.16}) is negative for $\tau_\varepsilon \geq \tau_{1}$. Note that $\int_{\Omega}\rho\left(v_{\varepsilon}\right)dx$ is continuous for $\tau_\varepsilon$. Therefore, there exists $\tau_\varepsilon \in\left[-\tau_{1}, \tau_{1}\right]$ such that $\int_{\Omega}\rho\left(v_{\varepsilon}\right) dx=m $.

$Step$ 3. We now estimate the energy of $v_\varepsilon$ on $\Omega$ as follows:

First, we estimate the energy in the regions$$\left\{{{d}_{\Gamma}(x)}\leq-\varepsilon^{\frac{1}{2}}+\tau_\varepsilon \varepsilon^{\frac{1}{2}}\right\}\cup\left\{ {{d}_{\Gamma}(x)}\geq 2\varepsilon^{\frac{1}{2}}+\tau_\varepsilon \varepsilon^{\frac{1}{2}}\right\}.$$
From the construction of $v_{\varepsilon}$, we have
\begin{equation}
 \int_{\left\{{{d}_{\Gamma}(x)}\leq-\varepsilon^{\frac{1}{2}}+\tau_\varepsilon \varepsilon^{\frac{1}{2}}\right\}\cup\left\{ {{d}_{\Gamma}(x)}\geq 2\varepsilon^{\frac{1}{2}}+\tau_\varepsilon \varepsilon^{\frac{1}{2}}\right\}}\left(|\nabla v_\varepsilon(x)|^{2}+\frac{1}{\varepsilon^{2}} F(v_\varepsilon(x))\right)dx=0, \label{3.17}
\end{equation}
and
\begin{equation}
 F(v_\varepsilon(x))=0\quad\text{ on }\left\{\varepsilon^{\frac{1}{2}}+\tau_\varepsilon \varepsilon^{\frac{1}{2}}\leq {{d}_{\Gamma}(x)}\leq 2\varepsilon^{\frac{1}{2}}+\tau_\varepsilon \varepsilon^{\frac{1}{2}}\right\}.\label{fact}
\end{equation}
 Federer's co-area formula and the formula of change of variables, combined with $|\nabla d_\Gamma(x)|=1$, imply

\begin{equation}
\begin{aligned}
 & \int_{\left\{\varepsilon^{\frac{1}{2}}+\tau_\varepsilon \varepsilon^{\frac{1}{2}}\leq {{d}_{\Gamma}(x)}\leq 2\varepsilon^{\frac{1}{2}}+\tau_\varepsilon \varepsilon^{\frac{1}{2}}\right\}}\left(| \nabla v_\varepsilon(x)|^{2}+\frac{1}{\varepsilon^{2}} F(v_\varepsilon(x))\right) d x\\
 =& \int_{\left\{\varepsilon^{\frac{1}{2}}+\tau_\varepsilon \varepsilon^{\frac{1}{2}}\leq {{d}_{\Gamma}(x)}\leq 2\varepsilon^{\frac{1}{2}}+\tau_\varepsilon \varepsilon^{\frac{1}{2}}\right\}}\left(|\nabla v_\varepsilon(x)|^{2}+\frac{1}{\varepsilon^{2}} F(v_\varepsilon(x))\right)|\nabla {{d}_{\Gamma}(x)}| d x\\
 =&\int_{\varepsilon^{\frac{1}{2}}+\tau_\varepsilon \varepsilon^{\frac{1}{2}}}^{2\varepsilon^{\frac{1}{2}}+\tau_\varepsilon \varepsilon^{\frac{1}{2}}}\left(\left|f_{\varepsilon}^\prime\left({s-\tau_\varepsilon\varepsilon^{\frac{1}{2}}}\right)\right|^2+\frac{1}{\varepsilon^{2}} F(f_\varepsilon({s-\tau_\varepsilon\varepsilon^{\frac{1}{2}}}))\right)\\&\times H^{n-1}(\left\{{{d}_{\Gamma}(x)}=s\right\})ds\\
 \leq&C \int_{\varepsilon^{\frac{1}{2}}+\tau_\varepsilon \varepsilon^{\frac{1}{2}}}^{2\varepsilon^{\frac{1}{2}}+\tau_\varepsilon \varepsilon^{\frac{1}{2}}}\left|\frac{\xi^{\prime}\left({{\varepsilon}^{-\frac{1}{2}}s-\tau_\varepsilon-1}\right)}{\varepsilon^{\frac{1}{2}}}\right|^2ds\\
 \leq& C\varepsilon^{\frac{1}{2}}\left(\frac{\max_{0\leq s\leq 1 }|\xi^{\prime}(s)|}{\varepsilon^{\frac{1}{2}}}\right)^2 \\
 =&O({\varepsilon}^{-\frac{1}{2}}), 
\end{aligned}\label{3.18}
\end{equation}
where we have used $H^{n-1}(\left\{d_\Gamma(x)=s\right\})\leq C$ (when $s$ is small enough and (\ref{fact})). 

 For $\varepsilon$ sufficiently small, applying Lemma \ref{technical lemma 2} gives
 \begin{equation}
 \max _{-\varepsilon^{\frac{1}{2}}+\tau_\varepsilon \varepsilon^{\frac{1}{2}}\leq s\leq \varepsilon^{\frac{1}{2}}+\tau_\varepsilon \varepsilon^{\frac{1}{2}}} H^{n-1}\left(\{{{d}_{\Gamma}(x)}=s\}\right)\leq I_{\Omega}({\sigma_m})+2\delta.\label{error}
 \end{equation}

Using the bound above, Federer's co-area formula, and the formula of change of variables, we can compute

\begin{equation}
\begin{aligned}
 & \int_{\left\{-\varepsilon^{\frac{1}{2}}+\tau_\varepsilon \varepsilon^{\frac{1}{2}}\leq {{d}_{\Gamma}(x)}\leq \varepsilon^{\frac{1}{2}}+\tau_\varepsilon \varepsilon^{\frac{1}{2}}\right\}}\left(|\nabla v_\varepsilon(x)|^{2}+\frac{1}{\varepsilon^{2}} F(v_\varepsilon(x))\right) d x\\
 \leq &\left(\max _{-\varepsilon^{\frac{1}{2}}+\tau_\varepsilon \varepsilon^{\frac{1}{2}}\leq s\leq \varepsilon^{\frac{1}{2}}+\tau_\varepsilon \varepsilon^{\frac{1}{2}}} H^{n-1}\left(\{{{d}_{\Gamma}(x)}=s\}\right)\right) \\&\times
 \frac{1}{\varepsilon}\int_{{-\varepsilon}^{-\frac{1}{2}}}^{ {\varepsilon}^{-\frac{1}{2}}} {\left[\left|{\alpha}_{\varepsilon^{-\frac{1}{2}}}^{\prime}(s)\right|^{2}+\overline{F}\left({\alpha}_{\varepsilon^{-\frac{1}{2}}}(s)\right)\right]}ds\\
 \leq&\frac{I_{\Omega}({\sigma_m})+2\delta}{\varepsilon}c_0^F;
\end{aligned}\label{3.19}
\end{equation}
in the last step we have used the properties of $\alpha_{2L}$ from Lemma \ref{lemma alpha properties}.

Finally, we rescale the functional by a factor $\frac{1}{\varepsilon}$. Combining (\ref{3.17}), (\ref{3.18}) and (\ref{3.19}), we have
\begin{equation}
\limsup_{\varepsilon \to 0} \varepsilon \mathbf{E}_\varepsilon\leq\limsup_{\varepsilon\to0}\int_{\Omega}\left(\varepsilon|\nabla v_\varepsilon(x)|^{2}+\frac{1}{\varepsilon} F(v_\varepsilon(x))\right) d x\leq (I_{\Omega}({\sigma_m})+2\delta)c_0^F.
\end{equation}
Since $\delta>0$ is arbitrary, the proof is complete.
\end{proof}

\subsubsection{Refined upper bound estimates of \texorpdfstring{$\mathbf{E}_\varepsilon $}{} }
\quad In this subsubsection, we aim to derive a refined upper estimate for the upper bound of $\mathbf{E}_\varepsilon $ by constructing comparison maps. Unlike the proof in Proposition \ref{upper}, here we need a fine upper bound. Therefore, the approximation technique of Lemma \ref{technical lemma 1} fails. Under condition (\ref{G1}), the zero constant mean curvature of $\Gamma$ plays a crucial role in obtaining the refined estimate of $\mathbf{E}_\varepsilon $.

\begin{proposition}
 Under the same assumptions of Theorem \ref{theorem 2}, there exists a constant $C>0 $ and $\varepsilon_0>0$ such that for $0<\varepsilon\leq \varepsilon_0$, the following holds 
 \begin{equation}
\mathbf{E}_{\varepsilon} \leq \frac{c_0^FI_{\Omega}({\sigma_m})}{\varepsilon}+C. \label{3.03}
\end{equation}
\label{fine upper}
\end{proposition}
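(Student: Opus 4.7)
The plan is to sharpen Proposition \ref{upper} by extracting two consequences of (\ref{G1}). First, since $\sigma_m$ is an interior minimum of $I_\Omega$ and $n\le 7$, the minimizer $\Omega^+$ of $I_\Omega(\sigma_m)$ already has smooth boundary $\Gamma$, so the $C^2$-approximation step of Lemma \ref{technical lemma 1}—the source of the arbitrary slack $2\delta c_0^F/\varepsilon$ in (\ref{3.19})—becomes unnecessary. Second, comparing $\Omega^+$ with small $L^1$-perturbations and invoking (\ref{G1}) shows that $\Omega^+$ locally minimizes $\mathrm{Per}_\Omega$ in an $L^1$-ball \emph{without} any volume constraint; the standard first variation then forces the generalized mean curvature of $\Gamma$ to vanish identically, which upgrades Lemma \ref{technical lemma 2} to
\[
H^{n-1}(\{d_\Gamma=s\})=I_\Omega(\sigma_m)+O(s^2)\qquad\text{as } s\to 0.
\]
The quadratic, rather than linear, remainder is the key quantitative input behind (\ref{3.03}).

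I will take a comparison map $v_\varepsilon(x)=f_\varepsilon(d_\Gamma(x)-\tau_\varepsilon\varepsilon^{1/2})$ built from $\alpha_{\varepsilon^{-1/2}}$ as in Proposition \ref{upper} for the $p^-\to p^+$ layer across $\Gamma$, but with one structural change: the auxiliary $p^+\to q$ transition in $N^+$ (where $\rho(q)=\rho_1:=m/\sigma_m$) is carried out across a strip $\{r_0\le d_\Gamma\le r_0+w_0\}$ of \emph{fixed} $\varepsilon$-independent width $w_0>0$, with $r_0>0$ also $\varepsilon$-independent and chosen so that the strip lies well inside $\Omega^+$. Since $|\nabla v_\varepsilon|^2=|\xi'|^2/w_0^2$ on this strip, the coarea formula produces an $O(1)$ energy contribution rather than the $O(\varepsilon^{-1/2})$ cost of (\ref{3.18}). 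The parameters $r_0,w_0$, along with the reparametrization of $\xi$ (which determines $\bar\rho_\xi=\int_0^1\rho(\xi(s))\,ds$), are fixed so that the $\varepsilon$-independent part of the mass equation
\[
(\rho(p^+)-\rho_1)\,r_0+(\bar\rho_\xi-\rho_1)\,w_0=0
\]
(plus the $O(r_0^3)$ geometric corrections coming from the curvature of $\Gamma$) is exactly satisfied; this is possible because (\ref{mass constraint}) together with $\rho_1\in(m_1,m_2)$ ensures $\rho|_{N^+}$ attains values both above and below $\rho_1$, so $\bar\rho_\xi$ may be tuned to any prescribed value near $\rho_1$ by choosing $\xi$ that passes through both regions and adjusting its parametrization. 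The residual $O(\varepsilon^{1/2})$ discrepancy coming from the $\alpha$-layer is then absorbed by a bounded shift $\tau_\varepsilon\in[-\tau_1,\tau_1]$ via the intermediate value theorem, exactly as in Proposition \ref{upper}.

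The energy estimate splits into the three familiar pieces. Outside the $\alpha$- and $\xi$-layers the integrand vanishes; the $\xi$-layer contributes $O(1)$ by construction. On the $\alpha$-layer, coarea and the substitution $s=t/\varepsilon$ reduce the integral to
\[
\frac{1}{\varepsilon}\int_{-\varepsilon^{-1/2}}^{\varepsilon^{-1/2}}\bigl(|\alpha'_{\varepsilon^{-1/2}}(s)|^2+\overline F(\alpha_{\varepsilon^{-1/2}}(s))\bigr)\,H^{n-1}\bigl(\{d_\Gamma=\varepsilon s+\tau_\varepsilon\varepsilon^{1/2}\}\bigr)\,ds,
\]
and since $|\varepsilon s+\tau_\varepsilon\varepsilon^{1/2}|\le C\varepsilon^{1/2}$ throughout this range, the quadratic expansion above bounds the area factor by $I_\Omega(\sigma_m)+C\varepsilon$, while Lemma \ref{lemma alpha properties} bounds the orbit factor by $c_0^F+c_2 e^{-c_1/\sqrt\varepsilon}$. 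Their product is $\varepsilon^{-1}c_0^F I_\Omega(\sigma_m)+O(1)$, which combined with the $\xi$-layer contribution yields (\ref{3.03}).

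The main obstacle is the vanishing-mean-curvature step: without it the $\alpha$-layer yields only $O(\varepsilon^{-1/2})$ slack and (\ref{3.03}) would fail. The crux is reading (\ref{G1}) as local perimeter minimality of $\Omega^+$ in an $L^1$-ball without any volume constraint, after which Gr\"uter's regularity plus the standard first variation give $\kappa_E\equiv 0$. Everything else is routine bookkeeping along the lines of Proposition \ref{upper}; in particular, the cancellation between the $\alpha$-layer's $\varepsilon^{1/2}$ mass contribution and the $\varepsilon^{1/2}$ sliver of $V_1=\{0<d_\Gamma<r_0\}$ consumed by that layer is what keeps the residual mass equation solvable with $\tau_\varepsilon$ bounded.
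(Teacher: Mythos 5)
Your proposal is correct and rests on the same two pillars as the paper's proof: reading condition (\ref{G1}) as unconstrained local perimeter minimality of $\Omega^+$ (so that, with $n\le 7$, $\Gamma$ is a regular minimal surface meeting $\partial\Omega$ orthogonally and $H^{n-1}(\{d_\Gamma=s\})=I_\Omega(\sigma_m)+O(s^2)$), and then running the layer construction around the true minimizer rather than a $C^2$ approximation, with a bounded shift $\tau_\varepsilon$ restoring the mass. The only real deviation is how the $O(1)$ energy is arranged inside $\Omega^+$: you build a second collar transition of fixed width $w_0$ to a constant $q$ with $\rho(q)=\rho_1$ and must then solve the auxiliary balance equation in $(r_0,w_0,\bar\rho_\xi)$, whereas the paper sidesteps this bookkeeping by fixing once and for all an $\varepsilon$-independent map $u_0\in H^1(\Omega^+,N^+)$ with $\int_{\Omega^+}\rho(u_0)\,dx=m$ that equals $p^+$ in a neighborhood of $\Gamma$.
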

\begin{proof}
\text{{}}

$Step$ 1. The first step is to construct the function $v_{\varepsilon}$ satisfying the constraint $\int_{\Omega}\rho\left(v_{\varepsilon}\right) dx=m $. For $\Gamma$ (as defined in (\ref{gammer})), the following regularity properties hold (\cite{sternberg1999connectivity}, Theorem 2.1). There exists a closed subset of $\Gamma$ denoted by $\operatorname{sing} \Gamma$, whose Hausdorff dimension is at most $n-8$ for $n>7$ and which is an empty set for $n\leq 7$, such that on the compliment of this set, $\operatorname{reg} \Gamma:=\Gamma \backslash \operatorname{sing} \Gamma$. We have:

(i) $\Gamma$ consists of a finite union of $(N -1)$-dimensional regular surfaces \\ $\Gamma^{(1)}, \cdots, \Gamma^{(k)}$ and has zero constant mean curvature. 

(ii) For every $x \in\operatorname{reg} \Gamma\cap
\partial\Omega$, $\Gamma$ can be represented locally as the graph of a $C^{2, \alpha}$ function which meets $\partial\Omega$ orthogonally, where $0<\alpha<1$. 

To define a $C^{2, \alpha}$ signed distance function, we extend $\Gamma$ outside of $\Omega$ locally via reflection (we also call it $\Gamma$). 
Define the signal distance function ${d}_\Gamma(x)$ to $\Gamma$ (see Lemma \ref{technical lemma 2}). Let $S(t)= {H}^{n-1}(\{{d}_\Gamma(x)=t\})$. From (\cite{leoni2016second}, Lemma 5.4), $S$ is twice differentiable at $t=0$ and satisfies
\begin{equation}
 S^{\prime}(0)=(n-1)\kappa_\Gamma S(0)=0, 
\end{equation}
where $\kappa_\Gamma$ is the mean curvature of $\Gamma$. Thus, we have the Taylor expansion, as $t\to0$, 
\begin{equation}
 S(t)=S(0)+O(t^2).\label{expansion}
\end{equation}
Let $(p^+, p ^-)\in M^+ \times M^- $ such that $|p^+-p ^-| =d_N$. From $\mu(\Omega^+)=\mu\{ {d}_\Gamma(x)\geq0\}\in(\frac{m}{m_2},\frac{m}{m_1})$ (see (\ref{gammer})) and $\rho(N^+)=[m_1,m_2]$, there exists $u_0\in H^1:\Omega^+\to N^+$ such that
\begin{equation}
\begin{aligned}
 &1.\text{ }\int_{\{ {d}_\Gamma(x)\geq0\}}\rho(u_0(x))dx=m, \\
 &2.\text{ }u_0(x)=p^+\text{ on } \{0\leq {d}_{\Gamma}(x)\leq \delta\}\text{ for some small } \delta>0.\\
\end{aligned}\label{u_0}
\end{equation}
We extend $u_0$ to $\Omega$ by defining 
\begin{equation}
 \widetilde{u}_0=
 \begin{cases}u_0 & \text{ if } x\in\Omega^+, \\
 p^+& \text{ if } x\in \Omega\backslash\Omega^+. 
 \end{cases}
\end{equation}

For $\gamma\in(\frac{1}{2}, 1)$, define the following subsets of $\Omega$,
\begin{equation}
 \{{d}_{\Gamma}(x)\leq(-1+\tau_\varepsilon)\varepsilon^\gamma\} =\Omega_1^\varepsilon, 
\end{equation}
\begin{equation}
 \{(-1+\tau_\varepsilon)\varepsilon^\gamma\leq{d}_{\Gamma}(x)\leq(1+\tau_\varepsilon)\varepsilon^\gamma\}=\Omega_2^\varepsilon, 
\end{equation}
\begin{equation}
 \{{d}_{\Gamma}(x)\geq(1+\tau_\varepsilon)\varepsilon^\gamma\}=\Omega_3^\varepsilon.
\end{equation}

We now construct comparison maps $v_\varepsilon:\Omega\to\mathbb{R}^k$. Using Lemma \ref{lemma alpha properties}, with $L=\varepsilon^{\gamma-1}$ and the function $\widetilde{u}_0$, we define $ v_\varepsilon$ as:
\begin{equation}
 v_\varepsilon(x)=
 \begin{cases}
 p^-&\text{ on } \Omega_1^\varepsilon, \\
 \frac{p^{+}+p^{-}}{2}+\alpha_{\varepsilon^{\gamma-1}}\left(\frac{{d}_{\Gamma}(x)-\tau_\varepsilon\varepsilon^\gamma}{\varepsilon}\right) \frac{p^{+}-p^{-}}{\left|p^{+}-p^{-}\right|} & \text { on } \Omega_2^\varepsilon, \\
 \widetilde{u}_0 &\text{ on } \Omega_3^\varepsilon, 
 \end{cases}
\end{equation}
where $\tau_\varepsilon$ will be determined later. 

By construction, $v_\varepsilon\in H^1(\Omega, \mathbb{R}^k)$ if the following condition is satisfied
\begin{equation}
 (1+\tau_\varepsilon)\varepsilon^\gamma\leq\delta.\label{connect1}
\end{equation}

$Step$ 2.
 Now, we determine $\tau_\varepsilon$ (uniformly bounded for $\varepsilon$) such that the mass constraint is satisfied.

 Let $ \tau_\varepsilon\leq\delta\varepsilon^{-\gamma}-1\label{connect}$ to satisfied the condition (\ref{connect1}). First, by the construction of $v_\varepsilon$, we know

\begin{equation}
 \rho\left(v_{\varepsilon}\right)= \begin{cases}
0& \text { on } \Omega_1^\varepsilon, \\ 
 \rho(\widetilde{u}_0) & \text { on }\Omega_3^\varepsilon.\end{cases}\label{3.2.311}
\end{equation}

Using the expansion (\ref{expansion}), we have 
\begin{equation}
 \mu(\Omega_2^\varepsilon)= 2H^{n-1}(\Gamma)\varepsilon^{\gamma}+O(((1+|\tau_\varepsilon|)\varepsilon^\gamma)^2), 
\end{equation}
This, combined $\left\|\rho(v_\varepsilon(\cdot))\right\|_{\infty} \leq C_{0}$, we obtain
\begin{equation}
\begin{aligned}
\int_{\Omega_2^\varepsilon}\rho\left(v_{\varepsilon}\right)dx
\leq&C_{0}\left(H^{n-1}(\Gamma)\varepsilon^{\gamma}+O(((1+|\tau_\varepsilon|)\varepsilon^\gamma)^2)\right)
\end{aligned}
\end{equation}
Now, let us split this into two cases by $\tau_\varepsilon$:

For $\tau_\varepsilon\geq-1$, use the expansion (\ref{expansion}), we have

\begin{equation}
\begin{aligned}
 & \mu\left(\{0\leq{d}_{\Gamma}(x)\leq(1+\tau_\varepsilon)\varepsilon^\gamma\}\right)\\
 =&(1+\tau_\varepsilon)\varepsilon^\gamma H^{n-1}(\Gamma)+O(((1+\tau_\varepsilon)\varepsilon^\gamma)^2).
\end{aligned}
\end{equation}
Since $\rho(\widetilde{u}_0)\in[m_1, m_2] \text{ on } \Omega$, by the construction of (\ref{u_0}), we compute
\begin{equation}
 \begin{aligned}
 &\int_\Omega \rho(v_\varepsilon)dx-m\\
 = & \int_\Omega \rho(v_\varepsilon)dx-\int_{\{ {d}_\Gamma(x)\geq0\}}\rho(\widetilde{u}_0(x))dx\\
 \leq &C_0H^{n-1}(\Gamma)\varepsilon^{\gamma}+O(((1+|\tau_\varepsilon|)\varepsilon^\gamma)^2)-\int_{\{0\leq{d}_{\Gamma}(x)\leq(1+\tau_\varepsilon)\varepsilon^\gamma\}}m_1dx\\
 \leq &\left(C_0-(1+\tau_\varepsilon)m_1\right)H^{n-1}(\Gamma)\varepsilon^{\gamma}+O(((1+|\tau_\varepsilon|)\varepsilon^\gamma)^2).
 \end{aligned}\label{mass sup}
\end{equation}

For $\tau_\varepsilon\leq-1$, similar to the above argument, we obtain
\begin{equation}
 \begin{aligned}
 &\int_\Omega \rho(v_\varepsilon)dx-m\\
 \geq &\int_{\{(-1+\tau_\varepsilon)\varepsilon^\gamma\leq{d}_{\Gamma}(x)\leq0\}}\rho(p^+)dx-C_0\varepsilon^{\gamma}+O(((1+|\tau_\varepsilon|)\varepsilon^\gamma)^2)\\
 \geq &\left((1-\tau_\varepsilon)m_1- C_0\right)H^{n-1}(\Gamma)\varepsilon^{\gamma}+O(((1+|\tau_\varepsilon|)\varepsilon^\gamma)^2).
 \end{aligned}\label{mass inf}
\end{equation}

So, there exists $\tau_{1}$,  which is independent of $\varepsilon$, such that $\delta\varepsilon^{-\gamma}-1>\tau_{1}>0$ by further decreasing $\varepsilon$ and the r.h.s. in (\ref{mass sup}) is negative for $\tau_\varepsilon\leq-\tau_{1}$ while the r.h.s. of (\ref{mass inf}) is positive for $\tau_\varepsilon\geq \tau_{1}$. Note that $\int_{\Omega}\rho\left(v_{\varepsilon}\right)dx$ is continuous for $\tau_\varepsilon$. Therefore, there exists $\tau_\varepsilon \in\left[-\tau_{1}, \tau_{1}\right]$ for which $\int_{\Omega}\rho\left(v_{\varepsilon}\right) dx=m $.

$Step$ 3. We now estimate the energy of $v_\varepsilon$ on $\Omega$ as follows:
\begin{equation}
 \begin{aligned}
 &\int_{ \Omega}\left(|\nabla v_\varepsilon(x)|^{2}+\frac{1}{\varepsilon^{2}} F(v_\varepsilon(x))\right) d x\\
 =&\left\{\int_{\Omega_1^\varepsilon}+\int_{\Omega_2^\varepsilon}+\int_{\Omega_3^\varepsilon}\right\}\left(|\nabla v_\varepsilon(x)|^{2}+\frac{1}{\varepsilon^{2}} F(v_\varepsilon(x))\right) d x\\
 =& \mathrm{I}_\varepsilon+ \mathrm{II}_\varepsilon+ \mathrm{III}_\varepsilon.
 \end{aligned}
\end{equation}
First, for the term $\mathrm{I}_\varepsilon$, since $v_\varepsilon=p^-$ on $\Omega_1^\varepsilon$, which implies $\nabla v_\varepsilon
=0$ and $F(v_\varepsilon)=0 $. Thus
\begin{equation}
 \mathrm{I}_\varepsilon=0.\label{I}
\end{equation}
For the term $\mathrm{III}_\varepsilon$, since $v_\varepsilon=\widetilde{u}_0\in H^1(\Omega,N^+)$ on $\Omega_3^\varepsilon$ , we have
\begin{equation}
\begin{aligned}
   \mathrm{III}_\varepsilon
   &=\int_{\Omega_3^\varepsilon}|\nabla v_\varepsilon(x)|^{2}dx
   =\int_{\Omega_3^\varepsilon}|\nabla\widetilde{u}_0|^2dx\\
   &\leq \int_{\Omega^+}|\nabla u_0(x)|^{2}dx\leq C.\label{III}
\end{aligned}
\end{equation}
Now we estimate $\mathrm{II}_\varepsilon$. Since
\begin{equation}
 |\nabla {d}_\Gamma(x)|=1\quad\text{ for } x\in\Omega_2^\varepsilon, 
\end{equation}
we have
\begin{equation}
\begin{aligned}
 \mathrm{II}_\varepsilon=& \int_{ \Omega_2^\varepsilon}\left(|\nabla v_\varepsilon(x)|^{2}+\frac{1}{\varepsilon^{2}} F(v_\varepsilon(x))\right) d x\\=&\frac{1}{\varepsilon^2}\int_{\Omega_2^\varepsilon} \Bigg[\left|{\alpha}_{\varepsilon^{\gamma-1}}^{\prime}\left(\frac{{d}_{\Gamma}(x)-\tau_\varepsilon\varepsilon^\gamma}{\varepsilon}\right)\right|^{2}|\nabla {d}_{\Gamma}(x)|^2\\ &+ \overline{F}\left({\alpha}_{\varepsilon^{\gamma-1}}\left(\frac{{d}_{\Gamma}(x)-\tau_\varepsilon\varepsilon^\gamma}{\varepsilon}\right)\right)\Bigg]ds\\
 =&\frac{1}{\varepsilon^2}\int_{\Omega_2^\varepsilon} \Bigg[\left|{\alpha}_{\varepsilon^{\gamma-1}}^{\prime}\left(\frac{{d}_{\Gamma}(x)-\tau_\varepsilon\varepsilon^\gamma}{\varepsilon}\right)\right|^{2}\\&+\overline{F}\left({\alpha}_{\varepsilon^{\gamma-1}}\left(\frac{{d}_{\Gamma}(x)-\tau_\varepsilon\varepsilon^\gamma}{\varepsilon}\right)\right)\Bigg] |\nabla {d}_{\Gamma}(x)|ds.
\end{aligned}\label{II}
\end{equation}
Since (\ref{expansion}) gives
\begin{equation}
 {H}^{n-1}(\{{d}_\Gamma(x)=s\}\leq{H}^{n-1}(\Gamma)+Cs^2, 
\end{equation}
we have, by both the Federer's co-area formula and the formula of change of variables, 
\begin{equation}
\begin{aligned}
 \mathrm{II}_\varepsilon=&\frac{1}{\varepsilon^2}\int_{(-1+\tau_\varepsilon)\varepsilon^\gamma}^{(1+\tau_\varepsilon)\varepsilon^\gamma} \Bigg[\left|{\alpha}_{\varepsilon^{\gamma-1}}^{\prime}\left(\frac{s-\tau_\varepsilon\varepsilon^\gamma}{\varepsilon}\right)\right|^{2}\\&+\overline{F}\left({\alpha}_{\varepsilon^{\gamma-1}}\left(\frac{s-\tau_\varepsilon\varepsilon^\gamma}{\varepsilon}\right)\right)
 \Bigg]\times
 {H}^{n-1}(\{{d}_\Gamma(x)=s\})ds\\
 \leq& \frac{1}{\varepsilon^2}\int_{-\varepsilon^\gamma}^{\varepsilon^\gamma} {\left[\left|{\alpha}_{\varepsilon^{\gamma-1}}^{\prime}\left(\frac{s}{\varepsilon}\right)\right|^{2}+\overline{F}\left({\alpha}_{\varepsilon^{\gamma-1}}\left(\frac{s}{\varepsilon}\right)\right)\right]} \\&\times
 \left({H}^{n-1}(\Gamma)+C(|s|+|\tau_\varepsilon|\varepsilon^{\gamma})^2\right)ds\\
 =&\frac{H^{n-1}(\Gamma)}{\varepsilon} \int_{-\varepsilon^{\gamma-1}}^{ \varepsilon^{\gamma-1}} {\left[\left|{\alpha}_{\varepsilon^{\gamma-1}}^{\prime}(s)\right|^{2}+\overline{F}\left({\alpha}_{\varepsilon^{\gamma-1}}(s)\right)\right]}ds\\
 &+
 C(| \tau_\varepsilon|+1)^2\varepsilon^{2\gamma-1}\int_{-\varepsilon^{\gamma-1}}^{ \varepsilon^{\gamma-1}} {\left[\left|{\alpha}_{\varepsilon^{\gamma-1}}^{\prime}(s)\right|^{2}+\overline{F}\left({\alpha}_{\varepsilon^{\gamma-1}}(s)\right)\right]}ds\\ =&\mathrm{IV}_\varepsilon+\mathrm{V}_\varepsilon.\\
\end{aligned}
\end{equation}
For the term $\mathrm{IV}_\varepsilon$, by Lemma \ref{lemma alpha properties} (ii), 
\begin{equation}
 \mathrm{IV}_\varepsilon\leq \frac{H^{n-1}(\Gamma)}{\varepsilon}(c_0^F+c_2e^{-c_1\varepsilon^{\gamma-1}})\leq \frac{H^{n-1}(\Gamma)c_0^F}{\varepsilon}+C\varepsilon.\label{IV}
\end{equation}
For the term $\mathrm{V}_\varepsilon$, we also use Lemma \ref{lemma alpha properties} (ii) and $\frac{1}{2}<\gamma<1$ to get
\begin{equation}
 \mathrm{V}_\varepsilon\leq C\varepsilon^{2\gamma-1}\leq o(1).\label{V}
\end{equation}
Finally, combining (\ref{I}), (\ref{II}), (\ref{III}), with (\ref{IV}), we have
\begin{equation}
 \int_{ \Omega}\left(|\nabla v_\varepsilon(x)|^{2}+\frac{1}{\varepsilon^{2}} F(v_\varepsilon(x))\right) d x\leq \frac{H^{n-1}(\Gamma)c_0^F}{\varepsilon}+C.
\end{equation}
The proof of Proposition \ref{fine upper} is complete. 
\end{proof}

\subsection{Lower bound estimates of \texorpdfstring{$\mathbf{E}_\varepsilon $}{} \label{section 3.2}}
This subsection is devoted to the proof of lower bound estimates for $\mathbf{E}_\varepsilon $. 
The following property (see, \cite{maggi2012sets}) is important for our analysis:
\begin{equation*}
I_{\Omega}(t) \text { is locally Lipschitz on }(0, 1).
\end{equation*}

For any fixed $\delta \in\left[0, \frac{d_N}{2} \right]$, define the two disjoint measurable subsets
$$
\Omega_{\varepsilon, \delta}^{ \pm}=\left\{x \in \Omega: d\left(u_{\varepsilon}(x), N^{ }\right)\leq\delta\right\}
$$
and the complement subset
$$
E_{\varepsilon, \delta}=\Omega \backslash\left(\Omega_{\varepsilon, \delta}^{+} \cup \Omega_{\varepsilon, \delta}^{-}\right) \equiv\left\{x \in \Omega: {d}\left(u_{\varepsilon}(x), N\right) > \delta\right\}.
$$
Since $\{u_\varepsilon\}\in H^1(\Omega)$, it follows that $d(u_\varepsilon, N)\in H^1(\Omega)$, which implies that $\Omega_{\varepsilon, \delta}^{ \pm}$ is a set $E$ of finite perimeter for almost all $t\in\mathbb{R}^+$.  

For simplicity of notation, we denote $d(u_\varepsilon(x)):=d\left(u_{\varepsilon}(x),N\right)$.We denote the reduced boundary of a set $E$ of finite perimeter by $\partial^*E$. Recalling from \cite{maggi2012sets}, we have 
\begin{equation}
  H^{n-1}(\partial^*E\cap\Omega)=\operatorname{Per}_\Omega E=D_{\chi_E}(\Omega).
\end{equation}
 where ${\chi_E}$ is the characteristic function of $E$. 
 We also use Federer's co-area formula for a mapping $f$ belonging to the Sobolev class $H^1(\Omega, \mathbb{R})$ \cite{Mal2001TheCF}.
For $f \in L^1_{\text{loc}}(\Omega)$, a function $\tilde{f}$ is said to be a \textit{precise representative} of $f$ if
\begin{equation}
\tilde{f}(x) := \lim_{r \to 0} \frac{1}{|B(x, r)|} \int_{B(x, r)} f(y) \, dy
\end{equation}
at all points $x$ where this limit exists. From the Lebesgue differentiation theorem, we may always assume $d(u_\varepsilon, N)$ is precisely represented by modifying on a set of Lebesgue measure zero, which does not affect our conclusions. 
\begin{lemma}
  Suppose that $f\in H^1(\Omega, \mathbb{R})$ is precisely represented and let $g:\mathbb{R}\to\mathbb{R} $ be smooth. Then $f^{-1}(t) $ is countably $H^{n-1}$-rectifiable for almost all $t\in\mathbb{R}$. Moreover, the following co-area formula holds for every $L^n$ measurable set E:
 \begin{equation}
   \int_E g(f(x))|\nabla f(x)|dx=\int_{\mathbb{R}}g(t)H^{n-1}(\{f^{-1}(t)\}\cap E)dt, \label{Federer's co-area for sobolev}
 \end{equation}
 and
 \begin{equation}
   H^{n-1}(\{f^{-1}(t)\}\cap\Omega)=H^{n-1}(\partial^*\{f\leq t)\}\cap\Omega\})\label{equivalence of topological and reduced boundaries}
 \end{equation}
 for almost all $t\in\mathbb{R}$.
\end{lemma}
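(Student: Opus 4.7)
The plan is to reduce all three claims to Federer's classical co-area formula for Lipschitz maps, combined with De Giorgi's structure theorem for sets of finite perimeter. The main tool is a Lusin-type Lipschitz truncation: for every $\lambda>0$ there exists $f_\lambda\in C^{0,1}(\Omega)$ with Lipschitz constant bounded by $C\lambda$, $\mu(\{f\neq f_\lambda\})\to 0$ as $\lambda\to\infty$, and $\nabla f=\nabla f_\lambda$ almost everywhere on $\{f=f_\lambda\}$. Federer's theorem applied to $f_\lambda$ gives countable $H^{n-1}$-rectifiability of $f_\lambda^{-1}(t)$ for a.e.\ $t$ together with
\begin{equation*}
\int_E g(f_\lambda(x))|\nabla f_\lambda(x)|\,dx=\int_{\mathbb{R}}g(t)\,H^{n-1}(f_\lambda^{-1}(t)\cap E)\,dt.
\end{equation*}

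For parts (1) and (2), I would decompose $E=(E\cap\{f=f_\lambda\})\sqcup(E\cap\{f\neq f_\lambda\})$ and pass to the limit $\lambda\to\infty$. On the first piece, $|\nabla f|=|\nabla f_\lambda|$ and the precise representatives coincide, so the integrands match pointwise; on the second piece, $\mu(E\cap\{f\neq f_\lambda\})\to 0$ while $|\nabla f|=0$ almost everywhere on the set where $f$ fails to be locally Lipschitz, which controls the left-hand side. A countable union yields the rectifiability of $f^{-1}(t)$ for a.e.\ $t$, and monotone convergence produces (\ref{Federer's co-area for sobolev}).

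For part (3), setting $g\equiv 1$ and $E=\Omega$ in (\ref{Federer's co-area for sobolev}) yields $\int_{\mathbb{R}}H^{n-1}(f^{-1}(t)\cap\Omega)\,dt=\int_\Omega|\nabla f|\,dx<\infty$. Applying the same formula to the distributional gradient of $\chi_{\{f\leq t\}}$ shows that $\{f\leq t\}$ has finite perimeter for a.e.\ $t$, so by De Giorgi's structure theorem $H^{n-1}(\partial^*\{f\leq t\}\cap\Omega)=\operatorname{Per}_\Omega\{f\leq t\}$. It remains to verify $H^{n-1}\bigl((\partial^*\{f\leq t\})\triangle f^{-1}(t)\bigr)=0$ for a.e.\ $t$: every $x\in\partial^*\{f\leq t\}$ has density $1/2$ in both $\{f\leq t\}$ and $\{f>t\}$, which forces the precise representative $\widetilde f(x)=t$; conversely, at points of $f^{-1}(t)\cap\{|\nabla f|>0\}$, the implicit function theorem applied to a Lipschitz representative places $x$ in $\partial^*\{f\leq t\}$, while the degenerate level set $f^{-1}(t)\cap\{|\nabla f|=0\}$ is $H^{n-1}$-negligible for a.e.\ $t$.

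The main obstacle is the control of the degenerate set $f^{-1}(t)\cap\{|\nabla f|=0\}$: without a sharpened co-area estimate, one cannot immediately discard these points in the comparison between topological level sets and reduced boundaries. This is precisely the refinement provided in \cite{Mal2001TheCF}, so the proof is completed by invoking that reference after verifying the precise-representative hypothesis on $f\in H^1(\Omega,\mathbb{R})$.
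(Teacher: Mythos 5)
Your proposal is correct and takes essentially the same route as the paper: both reduce to Federer's theorem for Lipschitz maps via a Lusin-type Lipschitz decomposition and delegate the two delicate measure-theoretic points --- that $H^{n-1}(f^{-1}(t)\cap Z)=0$ for a.e.\ $t$ whenever $\mu(Z)=0$, and the handling of the degenerate set $f^{-1}(t)\cap\{|\nabla f|=0\}$ --- to \cite{Mal2001TheCF}. The paper simply quotes Theorem 1.1 of that reference for (\ref{Federer's co-area for sobolev}) and proves only (\ref{equivalence of topological and reduced boundaries}), using the countable disjoint Lipschitz decomposition of their Lemma 3.2 together with the Lipschitz-case boundary identity (Maggi, Thm.\ 18.1) and additivity of the perimeter measure over the pieces, whereas you argue the boundary identity directly via blow-up densities and the implicit function theorem; both are workable. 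One small correction: the reason $E\cap\{f\neq f_\lambda\}$ can be discarded in the limit is absolute continuity of the integral $\int g(f)|\nabla f|\,dx$ combined with $\mu(\{f\neq f_\lambda\})\to 0$ (and, on the right-hand side, the null-set property above), not the claim that $|\nabla f|=0$ a.e.\ where $f$ fails to be locally Lipschitz, which is not a well-posed pointwise statement.
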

\begin{proof}
  For (\ref{Federer's co-area for sobolev}), see (\cite{Mal2001TheCF}, Theorem 1.1). We prove (\ref{equivalence of topological and reduced boundaries}). By (\cite{Mal2001TheCF}, Lemma 3.2), there exist Lipschitz functions $f_k: \mathbb{R}^n \to \mathbb{R}$ and disjoint Borel subsets $E_k$ of $\mathbb{R}^n$ such that $f = f_k$ on $E_k$ and $L^n \left(\Omega\setminus \bigcup E_k \right) = 0$.
  That $f^{-1}(t)$ is countably $H^{n-1}$ rectifiable for almost all $y \in \mathbb{R}^m$ follows from the fact that for each $k$, $E_k \cap f^{-1}(t) = E_k \cap f_k^{-1}(t)$ is countably $H^{n-1}$-rectifiable for almost all $t \in \mathbb{R}$. Using $H^{n-1}(E \cap f^{-1}(t)) = 0$ for almost all $t$ whenever $L^n(E) = 0$ (see (\cite{Mal2001TheCF}, Theorem 3.3) and $H^{n-1}$-equivalence of topological and reduced boundaries of a.e. lower-level set of a Lipschitz function (\cite{maggi2012sets}, Theorem 18.1), we obtain for almost all $t\in\mathbb{R}$,
\begin{equation}
  \begin{aligned}
      H^{n-1}(\{f^{-1}(t)\}\cap\Omega)&=\sum_k H^{n-1}(E_k\cap\Omega \cap f_k^{-1}(t))\\
      &=\sum_k D_{\chi_{ \{f_k\leq t)\}}}(E_k\cap\Omega)\\
      &=\sum_k D_{\chi_{ \{f\leq t)\}}}(E_k\cap\Omega)\\
      &=D_{\chi_{\{f\leq t)\}}}(\Omega)\\
      &=H^{n-1}(\partial^*\{f\leq t)\}\cap\Omega\}).
  \end{aligned}
  \end{equation}
\end{proof}
We need a crude estimate similar to that in (\cite{lin2012phase}, Lemma 4.1):
\begin{proposition}
   There exists $C=C(N)>0$ such that for any $0<\delta \leq \delta_{N}$, it holds that
\begin{equation}
\begin{aligned}
& \int_{\Omega}\left(\left|\nabla u_{\varepsilon}\right|^{2}+\frac{1}{\varepsilon^{2}} F\left(u_{\varepsilon}\right)\right) d x \\
\geq&(1-C \delta) \int_{\Omega_{\varepsilon, \delta}^{+} \cup \Omega_{\varepsilon, \delta}^{-}}\left|\nabla\left(\Pi\left(u_{\varepsilon}\right)\right)\right|^{2} d x \\
 &+ \frac{2}{\varepsilon} \int_{0}^{\frac{d_{N}}{2}} \left( H^{n-1}(\partial^* \Omega_{\varepsilon, \lambda}^+ \cap \Omega) + H^{n-1}(\partial^* \Omega_{\varepsilon, \lambda}^- \cap \Omega) \right) \sqrt{f\left(\lambda^{2}\right)} d \lambda .\label{crude estimate a}
\end{aligned}
\end{equation}
\label{crude estimate}
\end{proposition}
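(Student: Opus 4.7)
My plan is to adapt the classical Modica--Mortola lower bound to this vectorial setting, following the strategy of (\cite{lin2012phase}, Lemma~4.1). The key observation is that because $F(u)=f(d^2(u,N))$, the vectorial energy dominates, pointwise, a tangential piece controlling $\nabla \Pi(u_\varepsilon)$ plus a scalar one-dimensional piece in the normal direction $d(u_\varepsilon,N)$; the one-dimensional AM--GM inequality applied to the scalar piece, combined with Federer's co-area formula, is what produces the integral $\int_0^{d_N/2}\sqrt{f(\lambda^2)}(\cdots)\,d\lambda$ appearing in the conclusion.

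\emph{Step~1 (tangent-normal splitting).} On $\Omega_{\varepsilon,\delta}^+\cup \Omega_{\varepsilon,\delta}^-=\{d(u_\varepsilon,N)\le\delta\}\subset N_{\delta_N}$, the decomposition $u_\varepsilon=\Pi(u_\varepsilon)+(u_\varepsilon-\Pi(u_\varepsilon))$ is smooth. Differentiating, and using that $d\Pi_{u_\varepsilon}$ is an $O(\delta)$-perturbation of the orthogonal projection onto $T_{\Pi(u_\varepsilon)}N$ (a consequence of $\Pi\in C^\infty(N_{\delta_N})$ and the fact that $d\Pi_p$ equals that orthogonal projection for $p\in N$), I obtain the pointwise inequality
\begin{equation*}
|\nabla u_\varepsilon|^2\ \ge\ (1-C\delta)\bigl(|\nabla \Pi(u_\varepsilon)|^2+|\nabla d(u_\varepsilon,N)|^2\bigr),\qquad C=C(N).
\end{equation*}
On the complement $E_{\varepsilon,\delta}$, I invoke instead the $1$-Lipschitz property of $d(\cdot,N)$ to deduce $|\nabla u_\varepsilon|^2\ge|\nabla d(u_\varepsilon,N)|^2$ a.e.; this in particular implies $|\nabla u_\varepsilon|^2\ge (1-C\delta)|\nabla d(u_\varepsilon,N)|^2$ on all of $\Omega$.

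\emph{Step~2 (AM--GM and co-area).} A pointwise AM--GM gives
\begin{equation*}
(1-C\delta)|\nabla d(u_\varepsilon,N)|^2+\tfrac{1}{\varepsilon^2}f(d^2(u_\varepsilon,N))\ \ge\ \tfrac{2\sqrt{1-C\delta}}{\varepsilon}\,|\nabla d(u_\varepsilon,N)|\sqrt{f(d^2(u_\varepsilon,N))}.
\end{equation*}
Since $d(\cdot,N)$ is Lipschitz and $u_\varepsilon\in H^1(\Omega)$, the composition $d(u_\varepsilon,N)$ lies in $H^1(\Omega)$, so the co-area formula (\ref{Federer's co-area for sobolev}) together with the identification (\ref{equivalence of topological and reduced boundaries}) yields
\begin{equation*}
\int_\Omega|\nabla d(u_\varepsilon,N)|\sqrt{f(d^2(u_\varepsilon,N))}\,dx=\int_0^{\infty}\!\sqrt{f(\lambda^2)}\,H^{n-1}\bigl(\partial^*\{d(u_\varepsilon,N)\le\lambda\}\cap\Omega\bigr)\,d\lambda.
\end{equation*}
For $0<\lambda<d_N/2$, the triangle inequality together with $d_N=\mathrm{dist}(N^+,N^-)$ forces $\{d(u_\varepsilon,N)\le\lambda\}=\Omega_{\varepsilon,\lambda}^+\sqcup\Omega_{\varepsilon,\lambda}^-$ up to null sets, so the reduced-boundary $H^{n-1}$-measure splits as the sum appearing in (\ref{crude estimate a}). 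Discarding the nonnegative contribution from $\lambda\ge d_N/2$ and absorbing the factor $\sqrt{1-C\delta}\ge 1-C\delta$ into the constant $C$ in front of the first term on the right-hand side of (\ref{crude estimate a}) completes the argument.

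\emph{Main obstacle.} The substantive step is the quantitative tangent-normal splitting in Step~1, which depends on sharp control of $d\Pi-P_{\Pi(u_\varepsilon)}$, where $P_{\Pi(u_\varepsilon)}$ denotes the exact orthogonal projection onto $T_{\Pi(u_\varepsilon)}N$; this is where the Riemannian geometry of $N$ enters and pins down the constant $C=C(N)$. A secondary subtlety is that the level-set identification $\{d(u_\varepsilon,N)\le\lambda\}=\Omega_{\varepsilon,\lambda}^+\sqcup\Omega_{\varepsilon,\lambda}^-$ is only valid for $\lambda<d_N/2$, which is precisely why the $\lambda$-integration in (\ref{crude estimate a}) is truncated at $d_N/2$.
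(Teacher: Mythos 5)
Your proposal follows essentially the same route as the paper: the tangent--normal splitting of $|\nabla u_\varepsilon|^2$ on the tubular neighborhood $N_{\delta_N}$ (the paper does this via the exact identity involving the second fundamental form $A$ of $N$, with $C=2\|A\|_{L^\infty(N)}$), the $1$-Lipschitz bound $|\nabla(d(u_\varepsilon,N))|\le|\nabla u_\varepsilon|$ off that neighborhood, then AM--GM plus the co-area formula and truncation at $d_N/2$. One quantitative slip, though: by placing the factor $(1-C\delta)$ on \emph{both} the tangential and the normal pieces in Step~1, your AM--GM step produces the coefficient $\tfrac{2\sqrt{1-C\delta}}{\varepsilon}$ in front of the co-area integral, whereas the proposition asserts the clean coefficient $\tfrac{2}{\varepsilon}$; and the deficit $\sqrt{1-C\delta}$ multiplying the \emph{second} term cannot be ``absorbed into the constant in front of the first term,'' since those are different terms. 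The fix is to use the sharper pointwise inequality the exact decomposition actually gives, namely $|\nabla u_\varepsilon|^2\ge|\nabla(d(u_\varepsilon,N))|^2+(1-C\delta)|\nabla(\Pi(u_\varepsilon))|^2$: the cross terms $\langle\nabla\Pi(u_\varepsilon),\nabla(d)\,\nu\rangle$ and $\langle\nabla(d)\,\nu,\,d\,\nabla\nu\rangle$ vanish (tangency/normality and $|\nu|=1$), so the normal contribution $|\nabla(d(u_\varepsilon,N))|^2$ survives with coefficient exactly $1$ and only the tangential part suffers the $O(\delta)$ loss from the second-fundamental-form term. With that correction your argument yields the stated inequality.
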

\begin{proof} 
For $0<\delta \leq \delta_{N}$, recall that, for $x \in \Omega_{\varepsilon, \delta}^{+} \cup \Omega_{\varepsilon, \delta}^{-}$, 
$$
\begin{aligned}
u_{\varepsilon}(x) & =\Pi\left(u_{\varepsilon}(x)\right)+{d}\left(u_{\varepsilon}(x)\right) v_{\varepsilon}(x), \\
v_{\varepsilon}(x) & =\frac{u_{\varepsilon}(x)-\Pi\left(u_{\varepsilon}(x)\right)}{\left|u_{\varepsilon}(x)-\Pi\left(u_{\varepsilon}(x)\right)\right|}.
\end{aligned}
$$
 For $x \in \Omega_{\varepsilon, \delta}^{+} \cup \Omega_{\varepsilon, \delta}^{-}$, as in (\cite{lin2012phase}, Lemma 4.1), we have
$$
\begin{aligned}
\left|\nabla u_{\varepsilon}(x)\right|^{2}= & \left|\nabla\left(\Pi\left(u_{\varepsilon}(x)\right)\right)\right|^{2}+|\nabla\left({d}\left(u_{\varepsilon}(x)\right)\right|^{2}+{d}^{2}\left(u_{\varepsilon}(x)\right)\left|\nabla\left(v_{\varepsilon}(x)\right)\right|^{2} \\
& -2d\left(u_{\varepsilon}(x)\right) A\left(\Pi\left(u_{\varepsilon}(x)\right)\left(\nabla\left(\Pi\left(u_{\varepsilon}(x)\right)\right), \nabla\left(\Pi\left(u_{\varepsilon}(x)\right)\right)\right)\right. \\
\geq & \left|\nabla\left({d}\left(u_{\varepsilon}(x)\right)\right)\right|^{2}+(1-C \delta)\left|\nabla\left(\Pi\left(u_{\varepsilon}(x)\right)\right)\right|^{2}, 
\end{aligned}
$$
where $A(\cdot)(\cdot, \cdot)$ is the second fundamental form of $N$ and $C=2\|A(\cdot)\|_{L^{\infty}(N)}$. Thus we obtain
\begin{equation}
\begin{aligned}
&\int_{\Omega}  \left(\left|\nabla u_{\varepsilon}\right|^{2}+\frac{1}{\varepsilon^{2}} F\left(u_{\varepsilon}\right)\right) d x \\
\geq&(1-C \delta) \int_{\Omega_{\varepsilon, \delta}^{+} \cup \Omega_{\varepsilon, \delta}^{-}}\left|\nabla\left(\Pi\left(u_{\varepsilon}\right)\right)\right|^{2} d x \\
& +\int_{\Omega_{\varepsilon, \delta}^{+} \cup \Omega_{\varepsilon, \delta}^{-}}\left(\left|\nabla\left({d}\left(u_{\varepsilon}\right)\right)\right|^{2}+\frac{1}{\varepsilon^{2}} F\left(u_{\varepsilon}\right)\right) d x \\
& +\int_{\Omega \backslash\left(\Omega_{\varepsilon, \delta}^{+} \cup \Omega_{\varepsilon, \delta}^{-}\right)}\left(\left|\nabla u_{\varepsilon}\right|^{2}+\frac{1}{\varepsilon^{2}} F\left(u_{\varepsilon}\right)\right) d x .
\end{aligned}\label{crude estimate 1}
\end{equation}
By the Cauchy-Schwarz inequality, co-area formula (\ref{Federer's co-area for sobolev}) and (\ref{equivalence of topological and reduced boundaries}), we estimate
\begin{equation}
  \begin{aligned}
& \int_{\Omega_{\varepsilon, \delta}^{+} \cup \Omega_{\varepsilon, \delta}^{-}}\left(\left|\nabla\left({d}\left(u_{\varepsilon}\right)\right)\right|^{2}+\frac{1}{\varepsilon^{2}} F\left(u_{\varepsilon}\right)\right) d x \\
\geq &\frac{2}{\varepsilon} \int_{\Omega_{\varepsilon, \delta}^{+} \cup \Omega_{\varepsilon, \delta}^{-}} \sqrt{f\left(d^{2}\left(u_{\varepsilon}\right)\right)}\left|\nabla\left({d}\left(u_{\varepsilon}\right)\right)\right| d x \\
 = & \frac{2}{\varepsilon} \int_{0}^{\delta}\left( H^{n-1}(\partial^* \Omega_{\varepsilon, \lambda}^+ \cap \Omega) + H^{n-1}(\partial^* \Omega_{\varepsilon, \lambda}^- \cap \Omega) \right) \sqrt{f\left(\lambda^{2}\right)} d \lambda .
\end{aligned}\label{crude estimate for inner}
\end{equation}
$$
H_{\varepsilon, \delta}^{ \pm}=\left\{x \in \Omega: \delta \leq {d}\left(u_{\varepsilon}(x), N^{ \pm}\right) \leq \frac{d_{N}}{2}\right\} .
$$
Since
$$
{d}\left(u_{\varepsilon}(x)\right)={d}\left(u_{\varepsilon}(x), N^{ \pm}\right) \quad \text { and } \quad\left|\nabla\left({d}\left(u_{\varepsilon}(x)\right)\right)\right| \leq\left|\nabla u_{\varepsilon}(x)\right| \quad \text { for } x \in H_{\varepsilon, \delta}^{ \pm}, 
$$
calculations similar to those for (\ref{crude estimate for inner}), we have
\begin{equation}
  \begin{aligned}
 &\int_{\Omega \backslash\left(\Omega_{\varepsilon, \delta}^{+} \cup \Omega_{\varepsilon, \delta}^{-}\right)}\left(\left|\nabla u_{\varepsilon}\right|^{2}+\frac{1}{\varepsilon^{2}} F\left(u_{\varepsilon}\right)\right) d x \\
\geq &\int_{H_{\varepsilon, \delta}^{+} \cup H_{\varepsilon, \delta}^{-}}\left(\left|\nabla u_{\varepsilon}\right|^{2}+\frac{1}{\varepsilon^{2}} F\left(u_{\varepsilon}\right)\right) d x \\
 \geq& \frac{2}{\varepsilon} \int_{H_{\varepsilon, \delta}^{+} \cup H_{\varepsilon, \delta}^{-}}
\sqrt{f\left(d^{2}\left(u_{\varepsilon}\right)\right)}\left|\nabla u_{\varepsilon}\right| d x \\
 \geq& \frac{2}{\varepsilon} \int_{H_{\varepsilon, \delta}^{+} \cup H_{\varepsilon, \delta}^{-}} \sqrt{f\left(d^{2}\left(u_{\varepsilon}\right)\right)}\left|\nabla\left({d}\left(u_{\varepsilon}\right)\right)\right| d x \\
 =&\frac{2}{\varepsilon} \int_{\delta}^{\frac{d_{N}}{2}} \left( H^{n-1}(\partial^* \Omega_{\varepsilon, \lambda}^+ \cap \Omega) + H^{n-1}(\partial^* \Omega_{\varepsilon, \lambda}^- \cap \Omega) \right)\sqrt{f\left(\lambda^{2}\right)} d \lambda .
\end{aligned}\label{crude estimate for outer}
\end{equation}
Putting (\ref{crude estimate for inner}) and (\ref{crude estimate for outer}) together, we get

\begin{align*}
& \quad \int_{\Omega_{\varepsilon, \delta}^{+} \cup \Omega_{\varepsilon, \delta}^{-}}\left(\left|\nabla\left({d}\left(u_{\varepsilon}\right)\right)\right|^{2}+\frac{1}{\varepsilon^{2}} F\left(u_{\varepsilon}\right)\right) d x \\
& +\int_{\Omega \backslash\left(\Omega_{\varepsilon, \delta}^{+} \cup \Omega_{\varepsilon, \delta}^{-}\right)}\left(\left|\nabla u_{\varepsilon}\right|^{2}+\frac{1}{\varepsilon^{2}} F\left(u_{\varepsilon}\right)\right) d x \\
& \quad \geq \frac{2}{\varepsilon} \int_{0}^{\frac{d_{N}}{2}} \left( H^{n-1}(\partial^* \Omega_{\varepsilon, \lambda}^+ \cap \Omega) + H^{n-1}(\partial^* \Omega_{\varepsilon, \lambda}^- \cap \Omega) \right) \sqrt{f\left(\lambda^{2}\right)} d \lambda .
\end{align*}
This, combined with (\ref{crude estimate 1}) implies (\ref{crude estimate a}). 
\end{proof}
In the following part, we fix $\delta^{*}\in(0,\delta_{N})$ satisfying $C(N) \delta^*<1$, where $C(N)$ is given in Proposition \ref{crude estimate}.  Next, we prove the following proposition:
\begin{proposition}
 For some sufficiently small $\varepsilon_0>0$, there exist $K_\varepsilon$ and $ L_\varepsilon$, which are uniformly bounded for $\varepsilon<\varepsilon_0$, such that
 \begin{equation}
\mu(\Omega_{\varepsilon, \delta^*}^+)+K_\varepsilon\varepsilon^{\frac{1}{2}}\in[\frac{m}{m_2}, \frac{m}{m_1}], \label{l5}
\end{equation}
\begin{equation}
 \mu(\Omega_{\varepsilon, \delta^*}^-)+L_\varepsilon\varepsilon^{\frac{1}{2}}\in[1-\frac{m}{m_1}, 1-\frac{m}{m_2}].
\end{equation}

\label{proposition 3.1}
\end{proposition}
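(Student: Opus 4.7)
The plan is to turn the energy upper bound $\mathbf{E}_{\varepsilon} \leq C/\varepsilon$ provided by Proposition~\ref{upper} into quantitative control over how $u_\varepsilon$ distributes between the two phases, and then feed that control into the mass constraint $\int_\Omega \rho(u_\varepsilon)\, dx = m$.

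First I would extract three consequences of $\int_\Omega F(u_\varepsilon)\, dx \leq C\varepsilon$ using the three bounds in hypothesis~\eqref{H1}. From $f(t) \geq c_1 t$ on $[0,\delta_N^2]$ one obtains $\int_{\{d(u_\varepsilon,N) \leq \delta_N\}} d^2(u_\varepsilon,N)\, dx \leq C\varepsilon$; in particular the quadratic energy controls the squared distance on each of $\Omega_{\varepsilon,\delta^*}^\pm$. From $f \geq c_3$ on $[\delta_N^2,\infty)$ combined with $f(t) \geq c_1 (\delta^*)^2$ on the intermediate range $[(\delta^*)^2, \delta_N^2]$, the potential is bounded below by a positive constant on $E_{\varepsilon,\delta^*}$, so $\mu(E_{\varepsilon,\delta^*}) \leq C\varepsilon$. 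Finally, from $f(t) \geq c_4 t^{1/2}$ for $t \geq t_0$ one gets $\int_{\{d(u_\varepsilon,N) \geq \sqrt{t_0}\}} d(u_\varepsilon, N)\, dx \leq C\varepsilon$.

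Next I would decompose the mass into the three pieces
\begin{equation*}
m = \int_{\Omega_{\varepsilon,\delta^*}^+} \rho(u_\varepsilon)\, dx + \int_{\Omega_{\varepsilon,\delta^*}^-} \rho(u_\varepsilon)\, dx + \int_{E_{\varepsilon,\delta^*}} \rho(u_\varepsilon)\, dx
\end{equation*}
and estimate each term using the Lipschitz continuity of $\rho$ together with the nearest-point projection $\Pi$ (well-defined since $\delta^* < \delta_N$). On $\Omega_{\varepsilon,\delta^*}^+$ the projection satisfies $\Pi(u_\varepsilon) \in N^+$, hence $\rho(\Pi(u_\varepsilon)) \in [m_1, m_2]$, and the Cauchy--Schwarz inequality combined with the squared-distance bound gives $\int_{\Omega_{\varepsilon,\delta^*}^+} |\rho(u_\varepsilon) - \rho(\Pi(u_\varepsilon))|\, dx \leq L_\rho \mu(\Omega)^{1/2} (C\varepsilon)^{1/2} = O(\sqrt{\varepsilon})$. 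On $\Omega_{\varepsilon,\delta^*}^-$ one has $\rho(\Pi(u_\varepsilon)) = 0$, so the same Cauchy--Schwarz argument yields $\bigl|\int_{\Omega_{\varepsilon,\delta^*}^-} \rho(u_\varepsilon)\, dx\bigr| = O(\sqrt{\varepsilon})$. On $E_{\varepsilon,\delta^*}$ one uses $|\rho(u_\varepsilon)| \leq L_\rho d(u_\varepsilon, N) + C$, and combining with $\mu(E_{\varepsilon,\delta^*}) \leq C\varepsilon$ and the last distance bound from the previous paragraph gives $\int_{E_{\varepsilon,\delta^*}} |\rho(u_\varepsilon)|\, dx \leq C\varepsilon$.

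Putting these together yields $m = \int_{\Omega_{\varepsilon,\delta^*}^+} \rho(\Pi(u_\varepsilon))\, dx + O(\sqrt{\varepsilon})$. Because $\rho(\Pi(u_\varepsilon)) \in [m_1, m_2]$ pointwise on $\Omega_{\varepsilon,\delta^*}^+$, this forces $m_1 \mu(\Omega_{\varepsilon,\delta^*}^+) - C\sqrt{\varepsilon} \leq m \leq m_2 \mu(\Omega_{\varepsilon,\delta^*}^+) + C\sqrt{\varepsilon}$, which gives the first stated inclusion with a uniformly bounded $K_\varepsilon$. The second inclusion follows from $\mu(\Omega_{\varepsilon,\delta^*}^-) = 1 - \mu(\Omega_{\varepsilon,\delta^*}^+) - \mu(E_{\varepsilon,\delta^*})$ together with the $O(\varepsilon)$ bound on $\mu(E_{\varepsilon,\delta^*})$. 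The main delicacy is improving the error from the crude $\delta^*$-independent Lipschitz bound (which would only give $O(\delta^*)$) down to $O(\sqrt{\varepsilon})$; this is exactly what the Cauchy--Schwarz step combined with the quadratic lower bound on $F$ near $N$ achieves.
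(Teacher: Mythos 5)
Your proposal is correct and follows essentially the same route as the paper: bound $\mu(E_{\varepsilon,\delta^*})$ by $C\varepsilon$ via a Chebyshev argument, show the mass contributions of $E_{\varepsilon,\delta^*}$ and $\Omega^-_{\varepsilon,\delta^*}$ are $O(\varepsilon)$ and $O(\sqrt{\varepsilon})$ respectively, replace $\rho(u_\varepsilon)$ by $\rho(\Pi(u_\varepsilon))\in[m_1,m_2]$ on $\Omega^+_{\varepsilon,\delta^*}$ with a Cauchy--Schwarz error $O(\sqrt{\varepsilon})$, and read off the inclusion. The only cosmetic difference is that the paper packages the last step through the average $m_\varepsilon=\int_{\Omega^+_{\varepsilon,\delta^*}}\rho(\Pi(u_\varepsilon))\,dx/\mu(\Omega^+_{\varepsilon,\delta^*})$, whereas you use the two-sided bound with $m_1,m_2$ directly; both yield a uniformly bounded $K_\varepsilon$.
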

\begin{proof}
Let $\delta_{1}(\delta^*)>0$ be sufficiently small, by properties (\ref{H1}), we have
\begin{equation}
E_{\varepsilon, \delta^*} \subset\left\{F(u_\varepsilon) \geq \delta_{1}\right\}, \label{l6}
\end{equation}
Then, by (\ref{l6}) and upper bound estimates in Proposition \ref{upper}, we have
\begin{equation}
\begin{aligned}
\mu(E_{\varepsilon, \delta^*}) & \leq \mu\left(\left\{F\left(u_{\varepsilon}\right) \geq \delta_{1}\right\}\right) \\
& \leq \delta_1 \int_{\left\{F\left(u_{\varepsilon}\right) \geq \delta_{1}\right\}} F\left(u_{\varepsilon}\right) dx\\
& \leq C \varepsilon^{2} \mathbf{E}_\varepsilon \left(u_{\varepsilon}\right) \leq C \varepsilon.
\end{aligned}\label{l7}
\end{equation}
Since
\begin{equation}
  1=\mu(E_{\varepsilon, \delta^*})+\mu(\Omega_{\varepsilon, \delta}^+)+\mu(\Omega_{\varepsilon, \delta}^-), 
\end{equation}
we can only prove (\ref{l5}).
By $\rho\in Lip(\mathbb{R}^k)$ and  properties (\ref{H1}), there exists $C_0\geq 0$ such that
\begin{equation}
  |\rho(u_\varepsilon)|\leq 2Lip(\rho)|u_\varepsilon|, \quad F(u_\varepsilon)\geq \frac{c_4}{2}|u_\varepsilon|\text{ for } |u_\varepsilon|\geq C_0.
\end{equation}
This, combined with (\ref{l7}) and the upper bound estimate Proposition \ref{upper}, implies
\begin{equation}
\begin{aligned}
   &\left|\int_{E_{\varepsilon, \delta^*}}\rho(u_\varepsilon)dx\right|\\
  \leq &\left\| \rho(\cdot)\right\|_{L^{\infty}(B(C_0))}\mu\left(E_{\varepsilon, \delta^*}\right)+2L\int_{\{|u_\varepsilon|\geq
  C_0\}\cap E_{\varepsilon, \delta^*} }|u_\varepsilon|dx\\
  \leq &C\varepsilon+\frac{4L}{c_4}\int_{\Omega}F(u_\varepsilon)dx\\
  \leq &C\varepsilon+ \varepsilon^{2} \mathbf{E}_\varepsilon\leq
  C \varepsilon.
\label{l9}
\end{aligned}
\end{equation}
Then, using the fact $ \rho\left(\Pi\left(u_{\varepsilon}(x)\right)\right)=0$ in $\Omega_{\varepsilon, \delta^*}^-$, $\rho\in Lip(\mathbb{R}^k)$ and properties (\ref{H1}), we have
\begin{equation}
\begin{aligned}
 & \left|\int_{\Omega_{\varepsilon, \delta^*}^-}\rho(u_\varepsilon)dx\right|\\
 \leq &\int_{\Omega_{\varepsilon, \delta^*}^-}\left|\rho(u_\varepsilon)-\rho(\Pi (u_\varepsilon))\right|dx\\
\leq & Lip(\rho)\int_{\Omega_{\varepsilon, \delta^*}^-}\left|u_\varepsilon-\Pi(u_\varepsilon)\right|dx\\
\leq & C\left(\int_\Omega F(u_\varepsilon)dx\right)^{1/2}\leq C\varepsilon^{\frac{1}{2}}, 
\end{aligned}\label{l10}
\end{equation}
in the last step, we have used the Cauchy-Schwarz inequality and the upper bound estimate Proposition \ref{upper}.

Similar to the above argument, we have 
\begin{equation}
 \left|\int_{\Omega_{\varepsilon, \delta^*}^+}
\rho(u_\varepsilon)-\rho(\Pi (u_\varepsilon))dx\right|\leq C\varepsilon^{\frac{1}{2}}.\label{l11}
\end{equation}
We define
\begin{equation}
 m_\varepsilon:=\frac{\int_{\Omega_{\varepsilon, \delta^*}^+}\rho(\Pi (u_\varepsilon))dx}{\mu(\Omega_{\varepsilon, \delta^*}^+)}, \label{l12}
\end{equation}
it follows from $\rho(\Pi (u_\varepsilon))\in[m_1, m_2]$ in $\Omega_{\varepsilon, \delta}^+$ that
\begin{equation}
 m_\varepsilon\in[m_1, m_2].\label{l13}
\end{equation}
By combining (\ref{l9}), (\ref{l10}), and (\ref{l11}) with (\ref{l12}) and using the mass constraint $\int_{\Omega}\rho(u)dx=m$, we obtain
\begin{equation}
 |m_\varepsilon\mu(\Omega_{\varepsilon, \delta^*}^+)-m|\leq C\varepsilon^{\frac{1}{2}}.\label{l14}
\end{equation}
This implies that there exists sufficiently small $\varepsilon_0$, and $K_\varepsilon$ uniformly bounded for $\varepsilon<
\varepsilon_0$ such that
\begin{equation}
 m_\varepsilon\left(\mu(\Omega_{\varepsilon, \delta^*}^+)+K_\varepsilon\varepsilon^{\frac{1}{2}}\right)=m.\label{l15}
\end{equation}
By (\ref{l13}) and (\ref{l15}) we deduce
\begin{equation}
\mu(\Omega_{\varepsilon, \delta^*}^+)+K_\varepsilon\varepsilon^{\frac{1}{2}}\in[\frac{m}{m_2}, \frac{m}{m_1}], \label{l16}
\end{equation}
which completes the proof.
\end{proof}

\setcounter{claim}{0} Let us prove Proposition \ref{lower} and \ref{fine lower} by an argument used in \cite{andre2011minimization}.
\subsubsection{Rough lower bound estimates of \texorpdfstring{$\mathbf{E}_\varepsilon $}{} }
\begin{proposition}
 Under the same assumptions of Theorem \ref{theorem 1}. There exist a constant $C>0 $ and $\varepsilon_0>0$ such that when $0<\varepsilon< \varepsilon_0$, we have
\begin{equation}
\mathbf{E}_{\varepsilon} \geq \frac{ c_0^FI_\Omega\left({\sigma_m}_{}\right)}{\varepsilon}-\frac{C_{}}{\varepsilon^{\frac{1}{2}}}, \label{3.02}
\end{equation}

where $c_0^F$ and $I_\Omega\left({\sigma_m}_{}\right)$ are as defined in Theorem \ref{theorem 1}
\label{lower}
\end{proposition}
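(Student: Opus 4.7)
The plan is to combine the crude pointwise bound from Proposition~\ref{crude estimate}, the isoperimetric inequality, and the measure information of Proposition~\ref{proposition 3.1}, together with fine control of the level sets $\{\Omega_{\varepsilon,\lambda}^{\pm}\}_{\lambda}$. I would first apply Proposition~\ref{crude estimate} with $\delta=\delta^{*}$, discard the nonnegative term $(1-C\delta^{*})\int|\nabla\Pi(u_{\varepsilon})|^{2}\,dx$, and reduce the task to bounding from below
$$\frac{2}{\varepsilon}\int_{0}^{d_{N}/2}\bigl(H^{n-1}(\partial^{*}\Omega_{\varepsilon,\lambda}^{+}\cap\Omega)+H^{n-1}(\partial^{*}\Omega_{\varepsilon,\lambda}^{-}\cap\Omega)\bigr)\sqrt{f(\lambda^{2})}\,d\lambda.$$
For a.e.\ $\lambda\in(0,d_{N}/2)$, $\Omega_{\varepsilon,\lambda}^{\pm}$ has finite perimeter, so the isoperimetric inequality gives $H^{n-1}(\partial^{*}\Omega_{\varepsilon,\lambda}^{\pm}\cap\Omega)\geq I_{\Omega}(\mu(\Omega_{\varepsilon,\lambda}^{\pm}))$, and the task reduces to establishing $I_{\Omega}(\mu(\Omega_{\varepsilon,\lambda}^{\pm}))\geq I_{\Omega}(\sigma_{m})-C(\varepsilon^{\frac{1}{2}}+\varepsilon/\lambda^{2})$ on a sufficiently large range of $\lambda$.

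The heart of the argument is transferring the measure information at $\lambda=\delta^{*}$ — where Proposition~\ref{proposition 3.1} places $\mu(\Omega_{\varepsilon,\delta^{*}}^{+})$ within $C\varepsilon^{\frac{1}{2}}$ of the interval $[m/m_{2},m/m_{1}]$ — to every $\lambda\in(0,d_{N}/2)$. For $\lambda\geq\delta^{*}$ I would use $\Omega_{\varepsilon,\lambda}^{+}\setminus\Omega_{\varepsilon,\delta^{*}}^{+}\subset E_{\varepsilon,\delta^{*}}$, since $d(u_{\varepsilon},N^{+})\leq\lambda\leq d_{N}/2$ forces $d(u_{\varepsilon},N^{-})\geq d_{N}/2>\delta^{*}$; combined with $\mu(E_{\varepsilon,\delta^{*}})\leq C\varepsilon$ from (\ref{l7}), this yields $|\mu(\Omega_{\varepsilon,\lambda}^{+})-\mu(\Omega_{\varepsilon,\delta^{*}}^{+})|\leq C\varepsilon$. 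For $\lambda\leq\delta^{*}$ I would instead observe that $\Omega_{\varepsilon,\delta^{*}}^{+}\setminus\Omega_{\varepsilon,\lambda}^{+}\subset\{\lambda<d(u_{\varepsilon},N^{+})\leq\delta^{*}\}$, on which (\ref{H1}) yields $F(u_{\varepsilon})\geq c_{1}\lambda^{2}$; together with $\int_{\Omega}F(u_{\varepsilon})\,dx\leq\varepsilon^{2}\mathbf{E}_{\varepsilon}\leq C\varepsilon$ (coming from the upper bound Proposition~\ref{upper}), this gives $|\mu(\Omega_{\varepsilon,\lambda}^{+})-\mu(\Omega_{\varepsilon,\delta^{*}}^{+})|\leq C\varepsilon/\lambda^{2}$. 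An analogous argument controls $\mu(\Omega_{\varepsilon,\lambda}^{-})$, and the local Lipschitz continuity of $I_{\Omega}$ on $(0,1)$, combined with the symmetry $I_{\Omega}(t)=I_{\Omega}(1-t)$, then produces the announced pointwise lower bound on $I_{\Omega}(\mu(\Omega_{\varepsilon,\lambda}^{\pm}))$.

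To close the argument I would split the $\lambda$-integral at the cutoff $\lambda_{0}:=\varepsilon^{\frac{1}{4}}$. On $(\lambda_{0},d_{N}/2)$ the correction $\varepsilon/\lambda^{2}\leq\varepsilon^{\frac{1}{2}}$ is integrable against $\sqrt{f(\lambda^{2})}$; on the tail $(0,\lambda_{0})$ I would simply discard the integrand, losing at most $I_{\Omega}(\sigma_{m})\int_{0}^{\lambda_{0}}\sqrt{f(\lambda^{2})}\,d\lambda\leq C\lambda_{0}^{2}=C\varepsilon^{\frac{1}{2}}$ via the bound $f(\lambda^{2})\leq c_{2}\lambda^{2}$ from (\ref{H1}). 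Adding the $\pm$ contributions and invoking the identity $c_{0}^{F}=4\int_{0}^{d_{N}/2}\sqrt{f(\lambda^{2})}\,d\lambda$ from Lemma~\ref{minimal connection} delivers exactly
$$\mathbf{E}_{\varepsilon}\geq\frac{c_{0}^{F}I_{\Omega}(\sigma_{m})}{\varepsilon}-\frac{C}{\varepsilon^{\frac{1}{2}}}.$$
The main obstacle I anticipate is precisely this two-sided control on $\mu(\Omega_{\varepsilon,\lambda}^{+})$ for small $\lambda$: the naive bound $C\varepsilon/\lambda^{2}$ diverges as $\lambda\to 0$, and one must balance it against the vanishing factor $\sqrt{f(\lambda^{2})}\leq c\lambda$ through the carefully chosen cutoff $\lambda_{0}=\varepsilon^{\frac{1}{4}}$ to recover the stated rate $\varepsilon^{-\frac{1}{2}}$.
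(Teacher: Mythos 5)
Your proposal is correct and follows essentially the same route as the paper: discard the gradient term in Proposition \ref{crude estimate}, bound $H^{n-1}(\partial^*\Omega_{\varepsilon,\lambda}^{\pm}\cap\Omega)$ below by $I_\Omega(\mu(\Omega_{\varepsilon,\lambda}^{\pm}))$, transfer the measure information of Proposition \ref{proposition 3.1} to other levels via a Chebyshev bound on $F(u_\varepsilon)$, cut off the $\lambda$-integral at $\varepsilon^{1/4}$, and use the local Lipschitz continuity of $I_\Omega$ together with $\sqrt{f(\lambda^2)}\leq C\lambda$. The only cosmetic difference is that the paper handles all $\lambda\in[\varepsilon^{1/4},d_N/2]$ uniformly through $\mu(E_{\varepsilon,\varepsilon^{1/4}})\leq C\varepsilon^{1/2}$ rather than your two-case $C\varepsilon$ versus $C\varepsilon/\lambda^2$ estimate, which yields the same rate.
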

\begin{proof}

Using Proposition \ref{proposition 3.1} and the definition of ${\sigma_m}$ (\ref{1.7}), we have
$$
I_{\Omega}\left(\mu(\Omega_{\varepsilon, \delta^*}^+)+K_\varepsilon\varepsilon^{\frac{1}{2}}\right)\geq I_{\Omega}({\sigma_m}).
$$
Since $I_\Omega$ is locally Lipschitz on $(0, 1)$, it follows that
\begin{equation}
 I_{\Omega}\left(\mu(\Omega_{\varepsilon, \delta^*}^+)\right)\geq I_{\Omega}({\sigma_m})-C\varepsilon^{\frac{1}{2}}. \label{3.51}
\end{equation}
From properties (\ref{H1}), for $\delta\in(0, \delta_N)$, we have 
\begin{equation}
E_{\varepsilon, \delta} \subset\left\{F(u_\varepsilon) \geq c_1\delta^2\right\}, \label{l8}
\end{equation}
and from Proposition \ref{fine upper}, $\{\varepsilon \mathbf{E}_\varepsilon \}$ is bounded, that is
\begin{equation}
 \varepsilon \mathbf{E}_ \varepsilon \left(u_{\varepsilon}\right) \leq C, \label{3.412}
\end{equation}
Combining the above, by direct calculation, we obtain
$$
 c_1\varepsilon^{\frac{1}{2}} \mu\left(\left\{F\left(u_{\varepsilon}\right) \geq c_1\varepsilon^{\frac{1}{2}}\right\}\right) \leq \int_{\left\{F\left(u_{\varepsilon}\right) \geq c_1\varepsilon^{\frac{1}{2}}\right\}} F\left(u_{\varepsilon}\right) \leq \varepsilon^{2} \mathbf{E}_\varepsilon \left(u_{\varepsilon}\right) \leq C \varepsilon, \label{3.42}
$$
which implies 
\begin{equation}
  \mu\left\{ F\left(u_{\varepsilon}\right) \geq c_1\varepsilon^{\frac{1}{2}}
\right\}\leq C\varepsilon^{\frac{1}{2}}.
\end{equation}
Using this and (\ref{l8}), for any $a \in\left[\varepsilon^{\frac{1}{4}}, \frac{d_N}{2}\right]$, we obtain
\begin{equation}
 \begin{aligned}
&\left|\mu(\Omega_{\varepsilon, \delta^*}^+)-\mu(\Omega_{\varepsilon, a}^+)\right|\leq \mu\left(E_{\varepsilon, \varepsilon^{\frac{1}{4}}}\right)\leq \mu\left\{ F\left(u_{\varepsilon}\right) \geq c_1\varepsilon^{\frac{1}{2}}
\right\}\leq C\varepsilon^{\frac{1}{2}}.\label{3.55}
\end{aligned} 
\end{equation}
Because $I$ is locally Lipschitz on $(0, 1)$, using (\ref{3.55}) and (\ref{3.51}), we derive
\begin{equation*}
\begin{aligned}
 H^{n-1}(\partial^* \Omega_{\varepsilon, a}^+ \cap \Omega) 
&\geq I_{\Omega}\left(\mu(\Omega_{\varepsilon, a}^+)\right) \\
&\geq I_{\Omega}\left(\mu(\Omega_{\varepsilon, \delta^*}^+)\right)-C \varepsilon^{\frac{1}{2}} \\
&\geq I_{\Omega}({\sigma_m})-C\varepsilon^{\frac{1}{2}}.\label{3.2.56}
\end{aligned}
\end{equation*}
Note that $I_{\Omega}({\sigma})=I_{\Omega}({1-\sigma})$ for $\sigma\in[0, 1]$, similar to the above argument, we obtain
\begin{equation}
   H^{n-1}(\partial^* \Omega_{\varepsilon, a}^- \cap \Omega)\geq I_{\Omega}({\sigma_m})-C\varepsilon^{\frac{1}{2}}.
\end{equation}
Finally, using the co-area formula, we obtain
\begin{equation}
  \begin{aligned}
    \mathbf{E}_\varepsilon \left(u_{\varepsilon}\right)&\geq\frac{2}{\varepsilon} \int_{\varepsilon^{\frac{1}{4}}}^{\frac{d_{N}}{2}} \left( H^{n-1}(\partial^* \Omega_{\varepsilon, \lambda}^+ \cap \Omega) + H^{n-1}(\partial^* \Omega_{\varepsilon, \lambda}^- \cap \Omega) \right) \sqrt{f\left(\lambda^{2}\right)} d \lambda\\
    &\geq \frac{4(I_{\Omega}({\sigma_m})-C\varepsilon^{\frac{1}{2}})}{\varepsilon}\int_{\varepsilon^{\frac{1}{4}}}^{\frac{d_{N}}{2}}\sqrt{f\left(\lambda^{2}\right)} d \lambda\\
    &\geq \frac{4(I_{\Omega}({\sigma_m})-C\varepsilon^{\frac{1}{2}})}{\varepsilon}\left(\frac{c_0^F}{4}-\int_{0}^{\varepsilon^{\frac{1}{4}}}\sqrt{f\left(\lambda^{2}\right)} d \lambda\right)\\
    &\geq \frac{4(I_{\Omega}({\sigma_m})-C\varepsilon^{\frac{1}{2}})}{\varepsilon}\left(\frac{c_0^F}{4}-C\varepsilon^{\frac{1}{2}}\right)\\
    &\geq \frac{ c_0^FI_\Omega\left({\sigma_m}_{}\right)}{\varepsilon}-\frac{C_{}}{\varepsilon^{\frac{1}{2}}}, 
  \end{aligned}\label{3.2.57}
\end{equation}
where we have used the following estimate from properties (\ref{H1}): 
\begin{equation}
  \int_{0}^{\varepsilon^{\frac{1}{4}}}\sqrt{f\left(\lambda^{2}\right)} d \lambda\leq C\varepsilon^{\frac{1}{2}}.
\end{equation}

 This completes the proof of Proposition \ref{lower}.
\end{proof}

\subsubsection{Refined lower bound estimates of \texorpdfstring{$\mathbf{E}_\varepsilon $}{} }
Under the condition (\ref{G1}) for $I_\Omega$ at $\sigma_m$, we derive a more refined lower bound estimate for $\mathbf{E}_\varepsilon$.
\begin{proposition}
 Under the same assumptions of Theorem \ref{theorem 2}, there exists a constant $C>0 $ and $\varepsilon_0>0$ such that for $0<\varepsilon< \varepsilon_0$, we have
\begin{equation}
\mathbf{E}_{\varepsilon} \geq \frac{ c_0^FI_{\Omega}\left({\sigma_m}_{}\right)}{\varepsilon}-C, \label{3.021}
\end{equation}
where $c_0^F$ and $I_\Omega\left({\sigma_m}_{}\right)$ are defined as in Theorem \ref{theorem 2}

\label{fine lower}
\end{proposition}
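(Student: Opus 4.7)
The strategy is to upgrade Proposition \ref{lower} by exploiting condition (\ref{G1}), which makes $\sigma_m$ a flat interior minimum of $I_\Omega$ on $[\frac{m}{m_2}-\eta,\frac{m}{m_1}+\eta]$. Under (\ref{G1}) the local-Lipschitz loss of order $\varepsilon^{1/2}$ that propagated through Proposition \ref{lower} disappears, so if one can confine $\mu(\Omega_{\varepsilon,\lambda}^\pm)$ in this interval for all $\lambda\ge\lambda_\varepsilon$ with $\lambda_\varepsilon\sim\varepsilon^{1/2}$, then the truncated tail $\int_0^{\lambda_\varepsilon}\sqrt{f(\lambda^2)}\,d\lambda\lesssim\lambda_\varepsilon^2\sim\varepsilon$ collapses after the $1/\varepsilon$ factor into an $O(1)$ remainder.

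The workhorse is a quadratic-in-$\lambda$ sub-level-set control. Starting from Proposition \ref{crude estimate} with $\delta=\delta^*$ and discarding the nonnegative Dirichlet term, I note that for $\lambda\le\delta^*$ the inclusion $\Omega_{\varepsilon,\delta^*}^+\setminus\Omega_{\varepsilon,\lambda}^+\subset\{F(u_\varepsilon)\ge c_1\lambda^2\}$ is immediate from (\ref{H1}), so Chebyshev together with $\int_\Omega F(u_\varepsilon)\,dx\le\varepsilon^2\mathbf{E}_\varepsilon\le C\varepsilon$ yields
\begin{equation*}
\mu(\Omega_{\varepsilon,\delta^*}^+\setminus\Omega_{\varepsilon,\lambda}^+)\le\frac{C\varepsilon}{\lambda^2}.
\end{equation*}
For $\lambda\in[\delta^*,d_N/2]$, the inequality $d(u_\varepsilon,N^-)\ge d_N-d(u_\varepsilon,N^+)\ge d_N/2$ on $\Omega_{\varepsilon,\lambda}^+\setminus\Omega_{\varepsilon,\delta^*}^+$ forces that enlargement into $E_{\varepsilon,\delta^*}$, which by (\ref{l7}) has measure at most $C\varepsilon$. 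Combined with Proposition \ref{proposition 3.1}, which places $\mu(\Omega_{\varepsilon,\delta^*}^+)$ within $C\varepsilon^{1/2}$ of $[\frac{m}{m_2},\frac{m}{m_1}]$, setting $\lambda_\varepsilon:=C'\varepsilon^{1/2}$ with $C'$ so large that $C\varepsilon/\lambda_\varepsilon^2<\eta/2$ forces
\begin{equation*}
\mu(\Omega_{\varepsilon,\lambda}^+)\in\Bigl[\frac{m}{m_2}-\eta,\;\frac{m}{m_1}+\eta\Bigr]\qquad\text{for all }\lambda\in[\lambda_\varepsilon,d_N/2]
\end{equation*}
and sufficiently small $\varepsilon$. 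The analogous statement for $\Omega_{\varepsilon,\lambda}^-$ follows from the bound on $\mu(\Omega_{\varepsilon,\delta^*}^-)$ in Proposition \ref{proposition 3.1} combined with $I_\Omega(\sigma)=I_\Omega(1-\sigma)$.

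Condition (\ref{G1}) then gives $I_\Omega(\mu(\Omega_{\varepsilon,\lambda}^\pm))\ge I_\Omega(\sigma_m)$, hence $H^{n-1}(\partial^*\Omega_{\varepsilon,\lambda}^\pm\cap\Omega)\ge I_\Omega(\sigma_m)$, on the whole range $[\lambda_\varepsilon,d_N/2]$. Substituting into the crude estimate and invoking Lemma \ref{minimal connection} together with $\sqrt{f(\lambda^2)}\le c_2^{1/2}\lambda$ from (\ref{H1}),
\begin{equation*}
\mathbf{E}_\varepsilon\ge\frac{4I_\Omega(\sigma_m)}{\varepsilon}\int_{\lambda_\varepsilon}^{d_N/2}\sqrt{f(\lambda^2)}\,d\lambda\ge\frac{4I_\Omega(\sigma_m)}{\varepsilon}\Bigl(\frac{c_0^F}{4}-\frac{c_2^{1/2}}{2}\lambda_\varepsilon^2\Bigr)=\frac{c_0^FI_\Omega(\sigma_m)}{\varepsilon}-C.
\end{equation*}
The hard part is the coordinated use of the $\lambda^{-2}$ Chebyshev bound and condition (\ref{G1}); without the flat-minimum hypothesis, even this sharper mass localization would be undermined by the Lipschitz error of $I_\Omega$, which costs $C\varepsilon^{1/2}$ per unit of $\sqrt{f}$-mass and reinstates the $\varepsilon^{-1/2}$-size remainder of Proposition \ref{lower}. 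It is exactly the matching of the scales $\sqrt{f(\lambda^2)}\sim\lambda$ with $C\varepsilon/\lambda^2$ that produces the $O(1)$ remainder.
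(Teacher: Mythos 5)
Your proposal is correct and follows essentially the same route as the paper: the paper likewise cuts the co-area integral at $L\varepsilon^{1/2}$, uses the Chebyshev bound $\mu\{F(u_\varepsilon)\ge c_1L^2\varepsilon\}\le C/L^2$ with $L$ large to push $\mu(\Omega_{\varepsilon,a}^\pm)$ into $[\frac{m}{m_2}-\eta,\frac{m}{m_1}+\eta]$, invokes condition (\ref{G1}) to get $H^{n-1}(\partial^*\Omega_{\varepsilon,a}^\pm\cap\Omega)\ge I_\Omega(\sigma_m)$ without the Lipschitz loss, and absorbs the tail $\int_0^{L\varepsilon^{1/2}}\sqrt{f(\lambda^2)}\,d\lambda\le C(L)\varepsilon$ into the $O(1)$ remainder. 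Your extra care with the range $\lambda\in[\delta^*,d_N/2]$ (via containment in $E_{\varepsilon,\delta^*}$) is a harmless refinement of the paper's one-line bound $|\mu(\Omega_{\varepsilon,\delta^*}^+)-\mu(\Omega_{\varepsilon,a}^+)|\le\mu(E_{\varepsilon,L\varepsilon^{1/2}})$.
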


\begin{proof}
 Similar to the argument for Proposition \ref{lower}. For any $a \in\left[L\varepsilon^{\frac{1}{2}}, \frac{d_N}{2}\right]$, we have
\begin{equation}
 \begin{aligned}
&\left|\mu(\Omega_{\varepsilon, \delta^*}^+)-\mu(\Omega_{\varepsilon, a}^+)\right|\leq \mu\left(E_{\varepsilon, L\varepsilon^{\frac{1}{2}}}\right)\leq \mu\left\{ F\geq c_1L^2\varepsilon
\right\}\leq \frac{C}{L^2}.
\end{aligned} 
\end{equation}
This and Proposition \ref{proposition 3.1}, for sufficiently large $L$, imply
\begin{equation}
 \mu(\Omega_{\varepsilon, a}^+)\in [\frac{m}{m_2}-\eta, \frac{m}{m_1}+\eta], 
\end{equation}
where $\eta$ is defined in the condition (\ref{G1}). Then, we use the condition (\ref{G1}) to get, 
\begin{equation*}
 H^{n-1}(\partial^* \Omega_{\varepsilon, a}^+ \cap \Omega) 
\geq I_{\Omega}\left( \mu(\Omega_{\varepsilon, a}^+)\right) 
\geq I_{\Omega}({\sigma_m}).
\end{equation*}
Notice $I_{\Omega}({\sigma})=I_{\Omega}({1-\sigma})$ for $\sigma\in[0, 1]$, similar to the above argument, we obtain
\begin{equation}
    H^{n-1}(\partial^* \Omega_{\varepsilon, a}^- \cap \Omega)\geq I_{\Omega}({\sigma_m}).
\end{equation}
Similar to the argument of (\ref{3.2.57}), we also have 
\begin{equation}
  \begin{aligned}
    \mathbf{E}_\varepsilon \left(u_{\varepsilon}\right)&\geq\frac{2}{\varepsilon} \int_{L\varepsilon^{\frac{1}{2}}}^{\frac{d_{N}}{2}} \left( H^{n-1}(\partial^* \Omega_{\varepsilon, \lambda}^+ \cap \Omega) + H^{n-1}(\partial^* \Omega_{\varepsilon, \lambda}^- \cap \Omega) \right) \sqrt{f\left(\lambda^{2}\right)} d \lambda\\
    &\geq \frac{4I_{\Omega}({\sigma_m})}{\varepsilon}\int_{L\varepsilon^{\frac{1}{2}}}^{\frac{d_{N}}{2}}\sqrt{f\left(\lambda^{2}\right)} d \lambda\\
    &\geq \frac{4I_{\Omega}({\sigma_m})}{\varepsilon}\left(\frac{c_0^F}{4}-\int_{0}^{L\varepsilon^{\frac{1}{2}}}\sqrt{f\left(\lambda^{2}\right)} d \lambda\right)\\
    &\geq \frac{4I_{\Omega}({\sigma_m})}{\varepsilon}\left(\frac{c_0^F}{4}-C(L)\varepsilon\right)\\
    &\geq \frac{ c_0^FI_\Omega\left({\sigma_m}_{}\right)}{\varepsilon}-C, 
  \end{aligned}\label{3.2.571}
\end{equation}
where we have used the following estimate from properties (\ref{H1}): 
\begin{equation}
  \int_{0}^{L\varepsilon^{\frac{1}{2}}}\sqrt{f\left(\lambda^{2}\right)} d \lambda\leq C(L)\varepsilon.\label{estimate 1}
\end{equation}
 This completes the proof of Proposition \ref{fine lower}.

\end{proof}

\begin{proof}[Proof of Theorem 1 and Theorem 2]
  Combing Proposition \ref{lower} and Proposition \ref{upper}, the proof of Theorem \ref{theorem 1} is complete.
Combing Proposition \ref{fine lower} and Proposition \ref{fine upper}, the proof of Theorem \ref{theorem 2}
 is complete.
\end{proof}

\section{Proof of Theorem \ref{Theorem 3}\label{section 4}}
This section is devoted to the proof of Theorem \ref{Theorem 3}. Recall from \cite{ambrosio1995new} that $SBV(\Omega)$, a subspace of $BV(\Omega)$ consists of all functions of bounded variations such that the Cantor part of the distributional derivatives is 0, that is 
\begin{equation}
  u\in SBV(\Omega)\iff Du=\nabla u dx+(u^+-u^-)H^{n-1}\llcorner J_u, 
\end{equation}
where $J_u $ denotes the set of jump discontinuity of $u$. For vector-valued functions, we define \begin{equation}
   SBV(\Omega, \mathbb{R}^k) = \{u=(u^1, \dots, u^k):u^i\in SBV(\Omega)\text{ for } i=1, \dots, k\}.
\end{equation}
We also recall the notion of $\Gamma$ from (\ref{gammer}). The proof of Theorem \ref{Theorem 3} consists of six claims.
\begin{proof}[Proof of Theorem 3]

\setcounter{claim}{0}

\begin{claim}
  There exists $\delta_\varepsilon\in[\frac{\delta^*}{2}, \delta^*],\varepsilon_0>0$ such that
  \begin{align}
    H^{n-1}(\Gamma)&\leq H^{n-1}(\partial^*\Omega^+_{\varepsilon, \delta_\varepsilon}\cap\Omega)\leq H^{n-1}(\Gamma)+C\varepsilon, \\
    H^{n-1}(\Gamma)&\leq H^{n-1}(\partial^*\Omega^-_{\varepsilon, \delta_\varepsilon}\cap\Omega)\leq H^{n-1}(\Gamma)+C\varepsilon,\label{bound}
  \end{align}
  for $\varepsilon<\varepsilon_0$.
  \label{theorem 3 claim 1}
\end{claim}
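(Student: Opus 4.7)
My plan is to locate $\delta_\varepsilon$ by an averaging/mean-value argument on the range $[\delta^*/2, \delta^*]$, exploiting the tight gap between the refined upper bound of Proposition \ref{fine upper} and the integrated lower bound furnished by Proposition \ref{crude estimate}. The lower half of the claim will hold for \emph{every} $\lambda$ in this range once $\varepsilon$ is small; the upper half will pick out a specific slice $\delta_\varepsilon$ via an integrated gap estimate.

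For the lower bound I would first observe that by property (\ref{H1}), $F(u_\varepsilon) \geq c_1(\delta^*/2)^2$ on $E_{\varepsilon,\delta^*/2}$, while Proposition \ref{fine upper} gives $\int_\Omega F(u_\varepsilon) \leq \varepsilon^2\mathbf{E}_\varepsilon \leq C\varepsilon$, hence $\mu(E_{\varepsilon,\delta^*/2}) \leq C\varepsilon$. Since $\delta^* < d_N/2$, for every $\lambda \in [\delta^*/2, \delta^*]$ one has $\Omega^+_{\varepsilon,\delta^*} \setminus \Omega^+_{\varepsilon,\lambda} \subset E_{\varepsilon,\lambda} \subset E_{\varepsilon,\delta^*/2}$, so $|\mu(\Omega^+_{\varepsilon,\delta^*}) - \mu(\Omega^+_{\varepsilon,\lambda})| \leq C\varepsilon$ (and analogously for the $-$ version). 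Combined with Proposition \ref{proposition 3.1}, this places $\mu(\Omega^\pm_{\varepsilon,\lambda}) \in [\tfrac{m}{m_2} - \eta, \tfrac{m}{m_1}+\eta]$, and condition (\ref{G1}) then yields $H^{n-1}(\partial^*\Omega^\pm_{\varepsilon,\lambda}\cap\Omega) = \operatorname{Per}_\Omega(\Omega^\pm_{\varepsilon,\lambda}) \geq I_\Omega(\mu(\Omega^\pm_{\varepsilon,\lambda})) \geq I_\Omega(\sigma_m) = H^{n-1}(\Gamma)$.

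For the upper bound, set $h(\lambda) := H^{n-1}(\partial^*\Omega^+_{\varepsilon,\lambda}\cap\Omega) + H^{n-1}(\partial^*\Omega^-_{\varepsilon,\lambda}\cap\Omega)$. Using Proposition \ref{crude estimate} (discarding the nonnegative projection term), the identity $c_0^F = 4\int_0^{d_N/2}\sqrt{f(\lambda^2)}\,d\lambda$, and the refined bound $\mathbf{E}_\varepsilon \leq c_0^F H^{n-1}(\Gamma)/\varepsilon + C$ from Proposition \ref{fine upper}, I would obtain
\[
\int_0^{d_N/2}\bigl(h(\lambda) - 2H^{n-1}(\Gamma)\bigr)\sqrt{f(\lambda^2)}\,d\lambda \leq C\varepsilon.
\]
The contribution of $[0, L\varepsilon^{1/2}]$ to this integral is at worst $-2H^{n-1}(\Gamma)\int_0^{L\varepsilon^{1/2}}\sqrt{f(\lambda^2)}\,d\lambda = O(\varepsilon)$ by (\ref{H1}), and on $[L\varepsilon^{1/2}, d_N/2]$ the integrand is nonnegative by the same lower-bound reasoning applied to this larger range (as in the proof of Proposition \ref{fine lower}). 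Since $\sqrt{f(\lambda^2)} \geq \sqrt{c_1}\,\delta^*/2$ on $[\delta^*/2, \delta^*]$, restricting gives $\int_{\delta^*/2}^{\delta^*}\bigl(h(\lambda) - 2H^{n-1}(\Gamma)\bigr)\,d\lambda \leq C\varepsilon$. The mean value theorem then produces $\delta_\varepsilon \in [\delta^*/2, \delta^*]$ with $h(\delta_\varepsilon) \leq 2H^{n-1}(\Gamma) + C\varepsilon$; combined with the per-term lower bound $H^{n-1}(\partial^*\Omega^\mp_{\varepsilon,\delta_\varepsilon}\cap\Omega) \geq H^{n-1}(\Gamma)$, this yields the asserted $H^{n-1}(\partial^*\Omega^\pm_{\varepsilon,\delta_\varepsilon}\cap\Omega) \leq H^{n-1}(\Gamma) + C\varepsilon$.

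The main subtlety will be the bookkeeping needed to guarantee that $[\delta^*/2, \delta^*]$ lies simultaneously inside all the ``good'' windows: it must sit inside $[L\varepsilon^{1/2}, \delta_N]$ so that Proposition \ref{proposition 3.1}, condition (\ref{G1}), and the isoperimetric lower bound from the proof of Proposition \ref{fine lower} all apply, and $\sqrt{f(\lambda^2)}$ must be uniformly positive there so that an integrated energy gap transfers to a pointwise bound on a single level set. Both hold once $\varepsilon$ is small because $\delta^* \in (0, \delta_N)$ and by the linear lower bound on $f$ near $0$ given by (\ref{H1}).
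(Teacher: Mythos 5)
Your argument is correct and is essentially the paper's own proof: both combine the crude lower bound of Proposition \ref{crude estimate} with the refined upper bound of Proposition \ref{fine upper} to show that the weighted integral of $h(\lambda)-2H^{n-1}(\Gamma)$ over $[\delta^*/2,\delta^*]$ is $O(\varepsilon)$, use the slice-wise isoperimetric lower bound $H^{n-1}(\partial^*\Omega^\pm_{\varepsilon,\lambda}\cap\Omega)\geq H^{n-1}(\Gamma)$ for $\lambda\geq K\varepsilon^{1/2}$ (which the paper imports from the proof of Proposition \ref{fine lower} and you re-derive), and then select $\delta_\varepsilon$ by averaging. The only cosmetic difference is that you spell out the lower-bound derivation rather than citing it, and you call the selection step the mean value theorem where the paper (loosely) invokes Fubini.
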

From the proof of Proposition \ref{proposition inf}, there exists $K>0,\varepsilon_0>0$ such that
\begin{equation}
 H^{n-1}(\partial^*\Omega^\pm_{\varepsilon, \delta}\cap\Omega) \geq H^{n-1}(\Gamma) \quad\forall \delta\in[K\varepsilon^{\frac{1}{2}}, \frac{d_N}{2}],\varepsilon<\varepsilon_0.\label{inf bound}
\end{equation}
Hence, we have from Proposition \ref{crude estimate} and Proposition \ref{fine upper}, 
\begin{equation}
  \begin{aligned}
   &\int_{\frac{\delta^*}{2}}^{\delta^*} \left( H^{n-1}(\partial^* \Omega_{\varepsilon, \lambda}^+ \cap \Omega) + H^{n-1}(\partial^* \Omega_{\varepsilon, \lambda}^- \cap \Omega) \right) \left( 2 \sqrt{f(\lambda^2)} \right) d \lambda\\
   \leq &c_0^F H^{n-1}(\Gamma) + C \varepsilon\\
&- 2 \left\{ \int_{0}^{\frac{\delta^*}{2}} + \int_{\delta^*}^{\frac{d_N}{2}} \right\} \left( H^{n-1}(\partial^* \Omega_{\varepsilon, \lambda}^+ \cap \Omega) + H^{n-1}(\partial^* \Omega_{\varepsilon, \lambda}^- \cap \Omega) \right) \sqrt{f(\lambda^2)} \, d \lambda\\
  \leq& c_0^F H^{n-1}(\Gamma) + C \varepsilon - 4H^{n-1}(\Gamma)  \left[ \left\{ \int_{K\varepsilon^{\frac{1}{2}}}^{\frac{\delta^*}{2}} + \int_{\delta^*}^{\frac{d_N}{2}} \right\} \sqrt{f(\lambda^2)} \, d\lambda \right]\\
  \leq& 4 H^{n-1}(\Gamma) \int_{\frac{\delta^*}{2}}^{\delta^*} \sqrt{f(\lambda^2)} \, d\lambda + C \varepsilon, 
  \end{aligned}\label{upper bound}
\end{equation}
where we have used the estimate (\ref{estimate 1}) in the last step.

This, combined with Fubini’s theorem, implies that there exists $\delta_\varepsilon\in(\frac{\delta^*}{2}, \delta^*)$ such that 
\begin{equation}
  H^{n-1}(\partial^* \Omega_{\varepsilon, \delta_\varepsilon}^+ \cap \Omega) + H^{n-1}(\partial^* \Omega_{\varepsilon,\delta_\varepsilon}^- \cap \Omega)\leq 2 H^{n-1}(\Gamma)+C\frac{\varepsilon}{{\delta^*}^2} .
\end{equation}
Combining (\ref{inf bound}) and (\ref{upper bound}), we obtain (\ref{bound}).
\begin{claim}
  For a subsequence $\varepsilon_i\to 0$, set $\delta_{\varepsilon_i}=\delta_i$, we have
  \begin{equation}
    \chi_{\Omega^\pm_{\varepsilon_i, \delta_i}}\rightharpoonup\chi_{\Omega^\pm} \text{ weakly in } BV(\Omega),
  \end{equation}
 where  $\Omega^\pm$ realizes the minimum $I(\sigma_m)$.
  \label{theorem 3 claim 2}
\end{claim}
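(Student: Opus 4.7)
The plan is to combine the uniform perimeter bound from Claim \ref{theorem 3 claim 1} with the measure estimate from Proposition \ref{proposition 3.1}, apply the $BV$ compactness theorem, and then pin down the limit via the isoperimetric inequality together with condition (\ref{G1}).

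First, I would note that the sequence $\{\chi_{\Omega^\pm_{\varepsilon_i, \delta_i}}\}$ is uniformly bounded in $BV(\Omega)$: its $L^1$ norms are bounded by $\mu(\Omega) = 1$, while its total variations $\operatorname{Per}_\Omega\Omega^\pm_{\varepsilon_i, \delta_i} = H^{n-1}(\partial^*\Omega^\pm_{\varepsilon_i, \delta_i}\cap\Omega)$ are bounded by $H^{n-1}(\Gamma) + C\varepsilon_i$ from Claim \ref{theorem 3 claim 1}. By Lemma \ref{lemma compact}, after passing to a further subsequence (still denoted by index $i$), we obtain $\chi_{\Omega^\pm_{\varepsilon_i, \delta_i}} \to \chi_{A^\pm}$ in $L^1(\Omega)$ for some sets $A^\pm$ of finite perimeter; combined with the uniform total-variation bound this upgrades to weak convergence in $BV(\Omega)$.

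Next, I would identify $A^+$ as an isoperimetric minimizer at volume $\sigma_m$. Lower semi-continuity (Lemma \ref{lemma infcontinuous}) together with Claim \ref{theorem 3 claim 1} gives
$$\operatorname{Per}_\Omega A^+ \leq \liminf_{i \to \infty} H^{n-1}(\partial^*\Omega^+_{\varepsilon_i, \delta_i} \cap \Omega) \leq H^{n-1}(\Gamma) = I_\Omega(\sigma_m).$$
The $L^1$ convergence yields $\mu(A^+) = \lim_i \mu(\Omega^+_{\varepsilon_i, \delta_i})$. Re-running the argument leading to (\ref{3.55}) with $\delta_i \in [\delta^*/2, \delta^*]$ in place of $\varepsilon^{1/4}$ shows $|\mu(\Omega^+_{\varepsilon_i, \delta_i}) - \mu(\Omega^+_{\varepsilon_i, \delta^*})| \leq C\varepsilon_i^{1/2}$, and then Proposition \ref{proposition 3.1} forces $\mu(A^+) \in [m/m_2, m/m_1]$. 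By the definition of the isoperimetric profile, $\operatorname{Per}_\Omega A^+ \geq I_\Omega(\mu(A^+))$, and condition (\ref{G1}) applied to $\mu(A^+) \in [m/m_2 - \eta, m/m_1 + \eta]$ yields $I_\Omega(\mu(A^+)) \geq I_\Omega(\sigma_m)$. Equality therefore holds throughout, so $A^+$ realizes the minimum $I_\Omega(\sigma_m)$; set $\Omega^+ := A^+$.

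The statement for $A^-$ follows by observing that $\Omega^+_{\varepsilon, \delta_i}$ and $\Omega^-_{\varepsilon, \delta_i}$ are disjoint and cover $\Omega$ up to the set $E_{\varepsilon, \delta_i}$ of measure $O(\varepsilon_i)$ (see (\ref{l7})); hence the $L^1$ limit of $\chi_{\Omega^-_{\varepsilon_i, \delta_i}}$ is $\chi_{\Omega \setminus \Omega^+}$, and using $I_\Omega(t) = I_\Omega(1-t)$ together with $\operatorname{Per}_\Omega \Omega^- = \operatorname{Per}_\Omega \Omega^+$, the set $\Omega^- := \Omega \setminus \Omega^+$ also realizes the isoperimetric minimum. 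The main technical obstacle is the transfer of the measure estimate from the fixed threshold $\delta^*$ in Proposition \ref{proposition 3.1} to the variable threshold $\delta_i$ supplied by Claim \ref{theorem 3 claim 1}, which requires controlling the shell $\{\delta^*/2 \leq d(u_{\varepsilon_i}, N) \leq \delta^*\}$ using the potential lower bound from (\ref{H1}); once this estimate is in hand, condition (\ref{G1}) does the remaining work of pinning the limiting volume into the window where $I_\Omega$ attains its minimum.
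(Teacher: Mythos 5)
Your argument is correct and follows essentially the same route as the paper: $BV$ compactness and lower semi-continuity give a limit set of perimeter at most $H^{n-1}(\Gamma)=I_\Omega(\sigma_m)$, the volume is pinned into $[\frac{m}{m_2},\frac{m}{m_1}]$ via Proposition \ref{proposition 3.1} (with the $\delta^*\to\delta_i$ transfer you describe, which works since $\delta_i\in[\delta^*/2,\delta^*]$), and the definition of $\sigma_m$ forces equality, identifying the limit as an isoperimetric minimizer. The extra care you take with the complement $\Omega^-$ and the threshold transfer fills in details the paper leaves implicit, but the underlying mechanism is the same.
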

From Claim \ref{theorem 3 claim 1} the family of functions $\{\chi_{\Omega^\pm_{\varepsilon_i, \delta_i}}\}$ is
 bounded in $BV(\Omega)$. Therefore, for a subsequence $\varepsilon_i\to 0$, there exists two sets $E^\pm\subset\Omega$ with finite perimeters such that
 \begin{equation}
    \chi_{\Omega^\pm_{\varepsilon_i, \delta_i}}\rightharpoonup \chi_{E^\pm} \text{ weakly in } BV(\Omega).
  \end{equation}
 By the lower semi-continuity of the $BV$-norm, 
 \begin{equation}
   H^{n-1}(\partial^*{E^\pm}\cap\Omega)\leq \liminf_{i\to\infty} H^{n-1}(\partial^*\Omega^\pm_{\varepsilon_i, \delta_i}\cap\Omega)=H^{n-1}(\Gamma), 
 \end{equation}
 with $  \mu(E^+)\in[\frac{m}{m_2}, \frac{m}{m_1}]$, combined with (\ref{gammer}), gives
 \begin{equation}
   \mu(E^\pm)=\mu(\Omega^\pm), 
 \end{equation}
 and $E^\pm=\Omega^\pm$ realizes the minimum $I(\sigma_m)$, where the same notions as in (\ref{gammer}) are used .
 \begin{claim}
 The following bound holds:
 \begin{equation}
     \int_{\Omega_{\varepsilon_i, \delta_i}^{+} \cup \Omega_{\varepsilon_i, \delta_i}^{-}}\left|\nabla\left(\Pi\left(u_{\varepsilon_i}\right)\right)\right|^{2} d x\leq C.
   \end{equation}
\label{theorem 3 claim 3}
\end{claim}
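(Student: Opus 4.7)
The plan is to derive the bound by combining the crude energy inequality of Proposition \ref{crude estimate} with the matching refined upper and lower bounds for $\mathbf{E}_\varepsilon$ already established in Propositions \ref{fine upper} and \ref{fine lower}. First, I would apply Proposition \ref{crude estimate} at the fixed level $\delta = \delta^*$, which was chosen so that $1 - C(N)\delta^* > 0$. This inequality splits the energy into a gradient-of-projection term (the quantity we want to control) and a co-area term indexed by $\lambda \in [0, d_N/2]$.

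Second, I would observe that the co-area term is precisely the object that is bounded below by $\frac{c_0^F I_\Omega(\sigma_m)}{\varepsilon} - C$ in the derivation of Proposition \ref{fine lower}. That argument proceeds by restricting the $\lambda$-integration to $[L\varepsilon^{1/2}, d_N/2]$, applying condition \eqref{G1} together with Proposition \ref{proposition 3.1} to conclude that $H^{n-1}(\partial^*\Omega^\pm_{\varepsilon,\lambda}\cap\Omega) \geq I_\Omega(\sigma_m)$ for $\lambda$ in that range, and absorbing the thin slice $\lambda \in [0, L\varepsilon^{1/2}]$ via the estimate $\int_0^{L\varepsilon^{1/2}}\sqrt{f(\lambda^2)}\,d\lambda \leq C\varepsilon$.

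Third, I would invoke the refined upper bound $\mathbf{E}_\varepsilon \leq \frac{c_0^F I_\Omega(\sigma_m)}{\varepsilon} + C$ of Proposition \ref{fine upper}. Subtracting the lower bound on the co-area term from this upper bound on $\mathbf{E}_\varepsilon$ cancels the singular $1/\varepsilon$ contribution and yields
\[
(1-C\delta^*)\int_{\Omega^+_{\varepsilon,\delta^*}\cup\Omega^-_{\varepsilon,\delta^*}} |\nabla(\Pi(u_\varepsilon))|^2\, dx \;\leq\; C.
\]
Since $1 - C\delta^* > 0$ by the choice of $\delta^*$, the desired uniform bound follows at the level $\delta^*$. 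Because $\delta_i \leq \delta^*$ forces the set inclusion $\Omega^\pm_{\varepsilon_i, \delta_i} \subset \Omega^\pm_{\varepsilon_i, \delta^*}$ (smaller $\delta$ produces smaller sub-level sets of $d(u_\varepsilon, N^\pm)$), the same bound transfers to the integral over $\Omega^+_{\varepsilon_i,\delta_i} \cup \Omega^-_{\varepsilon_i,\delta_i}$, which is what the claim asserts.

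There is no genuine analytic obstacle in this step; it is essentially a bookkeeping argument. The only care needed is to confirm that the co-area lower bound extracted inside the proof of Proposition \ref{fine lower} really cancels against the refined upper bound up to an $O(1)$ remainder. This is precisely why Claim \ref{theorem 3 claim 3} rests on the hypotheses of Theorem \ref{theorem 2}: under only the hypotheses of Theorem \ref{theorem 1} one would be forced to use Proposition \ref{lower}, whose $O(\varepsilon^{-1/2})$ defect would blow up when subtracted from the $O(\varepsilon^{-1})$ leading order and preclude any uniform bound on the projection gradient.
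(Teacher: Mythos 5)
Your argument is correct and is essentially the paper's own proof: both combine the crude estimate of Proposition \ref{crude estimate} with the lower bound on the co-area term extracted from the proof of Proposition \ref{fine lower} (via the inequality $H^{n-1}(\partial^*\Omega^{\pm}_{\varepsilon,\lambda}\cap\Omega)\geq H^{n-1}(\Gamma)$ for $\lambda\geq K\varepsilon^{1/2}$) and the refined upper bound of Proposition \ref{fine upper}, so that the $\varepsilon^{-1}$ terms cancel and the factor $1-C\delta^*>0$ yields the uniform bound. Your extra step of passing from $\delta^*$ to $\delta_i$ by the inclusion $\Omega^{\pm}_{\varepsilon_i,\delta_i}\subset\Omega^{\pm}_{\varepsilon_i,\delta^*}$ is a harmless cosmetic variant (one could equally apply the crude estimate directly at $\delta=\delta_i\leq\delta^*$).
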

From (\ref{inf bound}) in Claim \ref{theorem 3 claim 1}, we have
\begin{equation}
 H^{n-1}(\partial^*\Omega^\pm_{\varepsilon_i, \delta}\cap\Omega) \geq H^{n-1}(\Gamma) \quad\forall \delta\in[K\varepsilon_i^{\frac{1}{2}}, \frac{d_N}{2}].
\end{equation}
Using this and the estimate (\ref{estimate 1}), we have
\begin{equation}
  \begin{aligned}
    & \frac{2}{\varepsilon_i}\int_{0}^{\frac{d_{N}}{2}} \left( H^{n-1}(\partial^* \Omega_{\varepsilon_i, \lambda}^+ \cap \Omega) + H^{n-1}(\partial^* \Omega_{\varepsilon_i, \lambda}^- \cap \Omega) \right)\sqrt{f\left(\lambda^{2}\right)} d \lambda\\
   \geq&\frac{4}{\varepsilon_i} \int_{K\varepsilon_i^{\frac{1}{2}}}^{\frac{d_{N}}{2}}H^{n-1}(\Gamma)\sqrt{f\left(\lambda^{2}\right)} d \lambda\\
  \geq&\frac{ c_0^F H^{n-1}(\Gamma)}{\varepsilon_i}-\frac{4H^{n-1}(\Gamma)}{\varepsilon_i}\int_{0}^{K\varepsilon_i^{\frac{1}{2}}}\sqrt{f\left(\lambda^{2}\right)} d \lambda\\
   \geq&\frac{c_0^F H^{n-1}(\Gamma)}{\varepsilon_i}-C, 
  \end{aligned}
\end{equation}
This, combined with the crude estimate in Proposition \ref{crude estimate} and the upper bound estimates in Proposition \ref{fine upper}, implies this claim.
\begin{claim}
  Define
  \begin{equation}
    v_i \equiv \Pi(u_{\varepsilon_i}) \chi_{\Omega_{\varepsilon_i, \delta_i}^+ \cup \Omega_{\varepsilon_i, \delta_i}^-}, 
  \end{equation}
  then there exists $v\in SBV(\Omega, \mathbb{R}^k)$ such that, after taking possible subsequences, 
  \begin{equation}
     v_i \to v \text{ in } L^1(\Omega, \mathbb{R}^k) \text{ and } \nabla v_i \to \nabla v \text{ weakly in } L^1(\Omega, \mathbb{R}^{nk}). \label{claim 4}
  \end{equation}\label{theorem 3 claim 4}
  \end{claim}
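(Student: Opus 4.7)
The plan is to apply Ambrosio's $SBV$ compactness theorem to $\{v_i\}$. First, I would verify that $v_i \in SBV(\Omega,\mathbb{R}^k)\cap L^\infty$. Since $\delta_i \leq \delta^* < \delta_N$, the value $u_{\varepsilon_i}(x)$ lies in the tubular neighborhood $N_{\delta_N}$ whenever $x \in \Omega_{\varepsilon_i, \delta_i}^+ \cup \Omega_{\varepsilon_i, \delta_i}^-$, so the smoothness of $\Pi$ on $N_{\delta_N}$ yields $\Pi(u_{\varepsilon_i}) \in H^1(\Omega_{\varepsilon_i, \delta_i}^+ \cup \Omega_{\varepsilon_i, \delta_i}^-, N)$, with $|\Pi(u_{\varepsilon_i})| \leq M := \sup_{p \in N}|p|$ by compactness of $N$. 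Extending this bounded $H^1$ function by zero across the reduced boundary of the finite-perimeter set $\Omega_{\varepsilon_i, \delta_i}^+ \cup \Omega_{\varepsilon_i, \delta_i}^-$, the standard product/truncation rule in $BV$ (see \cite{ambrosio1995new}) places $v_i$ in $SBV(\Omega,\mathbb{R}^k)\cap L^\infty$ with no Cantor part, absolutely continuous part $\nabla(\Pi(u_{\varepsilon_i}))\chi_{\Omega_{\varepsilon_i, \delta_i}^+ \cup \Omega_{\varepsilon_i, \delta_i}^-}$, and jump set
$$
J_{v_i} \subset (\partial^*\Omega_{\varepsilon_i, \delta_i}^+ \cup \partial^*\Omega_{\varepsilon_i, \delta_i}^-) \cap \Omega.
$$

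Next, I would collect three uniform bounds: $\|v_i\|_{L^\infty} \leq M$ by construction; $\int_\Omega |\nabla v_i|^2 dx \leq C$ by Claim \ref{theorem 3 claim 3}; and $H^{n-1}(J_{v_i}) \leq 2H^{n-1}(\Gamma) + C\varepsilon_i \leq C$ by Claim \ref{theorem 3 claim 1}. These are exactly the hypotheses of Ambrosio's $SBV$ compactness theorem, which produces $v \in SBV(\Omega,\mathbb{R}^k)\cap L^\infty$ and a subsequence (still indexed by $i$) such that $v_i \to v$ strongly in $L^1(\Omega,\mathbb{R}^k)$ and $\nabla v_i \rightharpoonup \nabla v$ weakly in $L^2(\Omega,\mathbb{R}^{nk})$, hence also weakly in $L^1(\Omega,\mathbb{R}^{nk})$, which is the desired conclusion in (\ref{claim 4}).

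The delicate point will be the $SBV$ decomposition in the first step --- specifically, excluding a Cantor part and identifying the jump set with (a subset of) the reduced boundary of $\Omega_{\varepsilon_i, \delta_i}^\pm$. This should follow from classical $BV$ product calculus once one notes that the bounded $H^1$ function $\Pi(u_{\varepsilon_i})$ admits well-defined $L^\infty$ traces on each side of the reduced boundary, so the extension by zero contributes no singular measure beyond a jump concentrated on that boundary.
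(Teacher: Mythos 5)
Your proposal is correct and follows essentially the same route as the paper: both establish the three uniform bounds (the $L^\infty$ bound, the $L^2$ gradient bound from Claim \ref{theorem 3 claim 3}, and the jump-set bound $H^{n-1}(J_{v_i})\leq 2H^{n-1}(\Gamma)+C\varepsilon$ from Claim \ref{theorem 3 claim 1}) and then invoke Ambrosio's $SBV$ compactness theorem. The only difference is that you spell out in more detail why $v_i\in SBV(\Omega,\mathbb{R}^k)$ with jump set contained in the reduced boundaries, a point the paper asserts without elaboration.
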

  It follows from the definition of $v_i$ that the absolute continuous part of its distributional derivative is given by
  \begin{equation}
    \nabla v_i = \nabla (\Pi(u_{\varepsilon_i})) \chi_{\Omega_{\varepsilon_i, \delta_i}^+ \cup \Omega_{\varepsilon_i, \delta_i}^-},
  \end{equation}
  then, by Claim \ref{theorem 3 claim 3}, 
  \begin{equation}
    \int_{\Omega} |\nabla v_i|^2 \, dx \leq \int_{\Omega_{\varepsilon_i, \delta_i}^+ \cup \Omega_{\varepsilon_i, \delta_i}^-} |\nabla (\Pi(u_{\varepsilon_i}))|^2 \, dx \leq C.\label{claim 4, 1}
  \end{equation}
  The set of jump discontinuities of $v_i$, denoted as $J_{v_i}$, satisfies
  \begin{equation}
    H^{n-1}(J_{v_i})=H^{n-1}(\partial^* \Omega_{\varepsilon_i, \delta_i}^+ \cap \Omega) + H^{n-1}(\partial^* \Omega_{\varepsilon_i, \delta_i}^- \cap \Omega)\leq 2 H^{n-1}(\Gamma)+C\varepsilon.\label{claim 4, 2}
  \end{equation}
 Additionally,
  \begin{equation}
    \|v_i\|_{L^\infty(\Omega)}\leq C.\label{claim 4, 3}
  \end{equation}
   It follows from (\ref{claim 4, 1}), (\ref{claim 4, 2}) and (\ref{claim 4, 3}) that $\{v_i\}$ is a
 compact sequence in $SBV(\Omega, \mathbb{R}^k)$ (see (\cite{ambrosio1995new}, p.128)). Hence, we may assume that there exists $v_i\in SBV(\Omega, \mathbb{R}^k)$ such that (\ref{claim 4}) holds, and
 \begin{equation}
   H^{n-1}\left(J_{v}\right)\leq \liminf _{i \to \infty} H^{n-1}\left(J_{v_{i}} \right).\label{claim 4, 4}
 \end{equation}
\begin{claim}
   The limiting function $v$ satisfies
  $$v \in H^{1}\left(\Omega^{ \pm}, N^{ \pm}\right)$$
   \label{theorem 3 claim 5}
\end{claim}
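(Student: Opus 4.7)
\textbf{Proof proposal for Claim \ref{theorem 3 claim 5}.}

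The plan is to establish $v \in H^1(\Omega^\pm, N^\pm)$ by verifying three items: $\nabla v \in L^2(\Omega)$; $v(x) \in N^\pm$ for Lebesgue-a.e.\ $x \in \Omega^\pm$; and the jump set $J_v$ is $H^{n-1}$-negligible inside each $\Omega^\pm$. For the first, I would invoke Claim \ref{theorem 3 claim 3}, which supplies a uniform $L^2$ bound on $\nabla v_i$, pass to a weak $L^2$ limit via reflexivity, and identify it with $\nabla v$ using the weak $L^1$ convergence from Claim \ref{theorem 3 claim 4}. For the second, the weak $BV$ convergence of Claim \ref{theorem 3 claim 2} yields $L^1$ convergence $\chi_{\Omega^\pm_{\varepsilon_i, \delta_i}} \to \chi_{\Omega^\pm}$; extracting a further subsequence makes both this convergence and $v_i \to v$ hold pointwise a.e., so for a.e.\ $x \in \Omega^+$ one has $v_i(x) = \Pi(u_{\varepsilon_i}(x)) \in N^+$ eventually, and closedness of $N^+$ gives $v(x) \in N^+$; the case $\Omega^-$ is symmetric.

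The main obstacle is the third item, ruling out interior jumps of $v$. The key input is the sharp two-sided estimate from Claim \ref{theorem 3 claim 1},
\begin{equation*}
H^{n-1}(\Gamma) \leq H^{n-1}(\partial^* \Omega^\pm_{\varepsilon_i, \delta_i} \cap \Omega) \leq H^{n-1}(\Gamma) + C\varepsilon_i,
\end{equation*}
which forces $|D\chi_{\Omega^\pm_{\varepsilon_i, \delta_i}}|(\Omega) \to |D\chi_{\Omega^\pm}|(\Omega)$. Combined with the weak $BV$ convergence of Claim \ref{theorem 3 claim 2}, this is strict convergence of the vector measures $D\chi_{\Omega^\pm_{\varepsilon_i, \delta_i}}$, hence their total variations $|D\chi_{\Omega^\pm_{\varepsilon_i, \delta_i}}|$ converge weakly-$*$ to $|D\chi_{\Omega^\pm}|$ as nonnegative Radon measures. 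Since for $n\leq 7$ one may choose $\Omega^\pm$ as the regular open representative with boundary $\Gamma$ disjoint from $\Omega^\pm$ (by the regularity result recalled at the start of the proof of Proposition \ref{fine upper}), we have $|D\chi_{\Omega^\pm}|(K) = H^{n-1}(\Gamma \cap K) = 0$ for every compact $K \subset \Omega^+$, and upper semicontinuity of Radon measures on compacts gives
\begin{equation*}
\limsup_{i \to \infty} H^{n-1}(J_{v_i} \cap K) = \limsup_{i \to \infty} H^{n-1}\bigl((\partial^* \Omega^+_{\varepsilon_i, \delta_i} \cup \partial^* \Omega^-_{\varepsilon_i, \delta_i}) \cap K\bigr) = 0.
\end{equation*}
Applying Ambrosio's lower semi-continuity inequality (\ref{claim 4, 4}) on any open $U$ with $\overline{U} \subset \Omega^+$ then forces $H^{n-1}(J_v \cap U) = 0$, and exhausting $\Omega^+$ by such $U$ gives $H^{n-1}(J_v \cap \Omega^+) = 0$; the argument on $\Omega^-$ is identical.

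Combining the three items, on each $\Omega^\pm$ the distributional derivative of $v$ has no singular contribution, its absolutely continuous part is $L^2$-integrable, and $v$ is essentially bounded with values in $N^\pm$. This is precisely the statement $v|_{\Omega^\pm} \in H^1(\Omega^\pm, N^\pm)$, completing the claim.
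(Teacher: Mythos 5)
Your proposal is correct and follows essentially the same route as the paper: the decisive step in both is that the two-sided perimeter bounds of Claim \ref{theorem 3 claim 1} force the jump measures $H^{n-1}\llcorner(\partial^*\Omega^{\pm}_{\varepsilon_i,\delta_i}\cap\Omega)$ to converge to $H^{n-1}\llcorner\Gamma$ with no mass escaping away from $\Gamma$, after which the lower semicontinuity (\ref{claim 4, 4}) localized to sets at positive distance from $\Gamma$ kills $J_v$ inside $\Omega^{\pm}$. Your compact-exhaustion/strict-convergence phrasing and your pointwise a.e.\ argument for $v\in N^{\pm}$ are only cosmetic variants of the paper's tubular-neighborhood and Fatou arguments.
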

  By Fatou's Lemma, we have
\begin{equation}
  \int_{\Omega^\pm} |v - \Pi(v)| \, dx 
  \leq \liminf_{i\to\infty}\int_{\Omega_{\varepsilon_i, \delta_i}^\pm}|v-v_i|dx\to 0, 
\end{equation}
which implies $v\in L^1(\Omega^\pm, N^\pm)$.
From Claim \ref{theorem 3 claim 4}, we conclude
$$
H^{n-1}\llcorner\left(\partial^* \Omega_{\varepsilon_{i}, \delta_i}^{ \pm} \cap \Omega\right) \rightharpoonup H^{n-1}\llcorner\Gamma
$$
as weak convergence of Radon measures. By the lower semi-continuity of Radon measures, we have for any $\eta>0$,
$$
\begin{aligned}
H^{n-1}(\Gamma) =&H^{n-1}\left(\Gamma \cap\left\{x \in \Omega: {d}_{\Gamma}(x)\leq\eta\right\}\right) \\
\leq& \liminf _{i \to \infty}H^{n-1}\left(\left\{x \in \Omega: {d}_{\Gamma}(x)\leq\eta\right\}\cap \left(\partial^* \Omega_{\varepsilon_{i}, \delta_i}^{ \pm} \cap \Omega\right) \right)\\
\leq&\lim_{i\to\infty} H^{n-1}\left(\partial^* \Omega_{\varepsilon_{i}, \delta_i}^{ \pm} \cap \Omega\right)\\
&-\limsup _{i \to \infty}H^{n-1}\left(\left\{x \in \Omega: {d}_{\Gamma}(x)>\eta\right\}\cap \left(\partial^* \Omega_{\varepsilon_{i}, \delta_i}^{ \pm} \cap \Omega\right) \right)\\
\leq& H^{n-1}(\Gamma)-\limsup _{i \to \infty}H^{n-1}\left(\left\{x \in \Omega: {d}_{\Gamma}(x)>\eta\right\}\cap \left(\partial^* \Omega_{\varepsilon_{i}, \delta_i}^{ \pm} \cap \Omega\right) \right).
\end{aligned}
$$
This implies
\begin{equation}
  \limsup _{i \to \infty}H^{n-1}\left(\left\{x \in \Omega: {d}_{\Gamma}(x)>\eta\right\}\cap \left(\partial^* \Omega_{\varepsilon_{i}, \delta_i}^{ \pm} \cap \Omega\right) \right)=0.
\end{equation}
 From (\ref{claim 4, 4}) and lower semi-continuity, we have
$$
\begin{aligned}
& H^{n-1}\left(J_{v} \cap\left\{x \in \Omega^{ \pm}: {d}_{\Gamma}(x)>\eta\right\}\right) \\
& \quad \leq \liminf _{i \to \infty} H^{n-1}\left(J_{v_{i}} \cap\left\{x \in \Omega^{ \pm}: {d}_{\Gamma}(x)>\eta\right\}\right) \\
& \quad \leq \liminf _{i \to \infty} H^{n-1}\left((\partial^* \Omega_{\varepsilon_{i}, \delta_i}^+\cup\partial^* \Omega_{\varepsilon_{i}, \delta_i}^-)\cap\Omega\cap\left\{x \in \Omega: {d}_{\Gamma}(x) >\eta\right\}\right)=0 .
\end{aligned}
$$
This implies
\begin{equation*}
H^{n-1}\left(J_{v} \cap \Omega^{ \pm}\right)=0, 
\end{equation*}
thus, $v \in H^{1}\left(\Omega^{ \pm}, N^{ \pm}\right)$. 

\begin{claim}
 The sequence $ u_{\varepsilon_i}\to v \text{ in } L^1(\Omega, \mathbb{R}^k)$. Specifically, we have 
 \begin{equation}
   \int_\Omega\rho(v)dx=m.
 \end{equation}\label{theorem 3 claim 6}
 \end{claim}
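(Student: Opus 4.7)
The plan is to first establish the $L^1$-convergence $u_{\varepsilon_i} \to v$, from which the mass identity $\int_\Omega \rho(v)\,dx = m$ will follow immediately: the Lipschitz bound $\left|\int_\Omega (\rho(u_{\varepsilon_i}) - \rho(v))\,dx\right| \leq \mathrm{Lip}(\rho)\,\|u_{\varepsilon_i} - v\|_{L^1}$ combined with the constraint $\int_\Omega \rho(u_{\varepsilon_i})\,dx = m$ passes to the limit. To prove the $L^1$-convergence, I would decompose $\Omega$ into the ``good'' piece $\Omega^+_{\varepsilon_i,\delta_i}\cup \Omega^-_{\varepsilon_i,\delta_i}$ and the ``bad'' piece $E_{\varepsilon_i,\delta_i}$, and estimate through the intermediate function $v_i$:
\[
\|u_{\varepsilon_i} - v\|_{L^1(\Omega)} \leq \int_{\Omega^+_{\varepsilon_i,\delta_i}\cup\Omega^-_{\varepsilon_i,\delta_i}} |u_{\varepsilon_i} - v_i|\,dx + \|v_i - v\|_{L^1(\Omega)} + \int_{E_{\varepsilon_i,\delta_i}} (|u_{\varepsilon_i}| + |v|)\,dx.
\]
The middle term vanishes as $i \to \infty$ by Claim \ref{theorem 3 claim 4}.

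On the good set, $v_i = \Pi(u_{\varepsilon_i})$, so $|u_{\varepsilon_i} - v_i| = d(u_{\varepsilon_i}, N) \leq \delta_i \leq \delta^* < \delta_N$. The first alternative in hypothesis (\ref{H1}) provides $F(u_{\varepsilon_i}) \geq c_1 d^2(u_{\varepsilon_i}, N)$ on this region, and combining with the energy bound $\varepsilon_i^2 \mathbf{E}_{\varepsilon_i}(u_{\varepsilon_i}) \leq C\varepsilon_i$ from Proposition \ref{fine upper} gives $\int_{\Omega^\pm_{\varepsilon_i,\delta_i}} d^2(u_{\varepsilon_i},N)\,dx \leq C\varepsilon_i$; Cauchy--Schwarz then yields an $O(\sqrt{\varepsilon_i})$ contribution. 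For the $\|v\|_{L^1(E_{\varepsilon_i,\delta_i})}$ term, the measure bound $\mu(E_{\varepsilon_i,\delta_i}) \leq C\varepsilon_i$ already established inside the proof of Proposition \ref{proposition 3.1}, together with $v \in L^\infty(\Omega)$ (as $v$ takes values in the compact set $N^\pm$ by Claim \ref{theorem 3 claim 5}), gives $O(\varepsilon_i)$.

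The main obstacle is the remaining term $\int_{E_{\varepsilon_i,\delta_i}} |u_{\varepsilon_i}|\,dx$, since the minimizers of (\ref{1.9}) carry no a priori $L^\infty$ bound. I would handle it via $|u_{\varepsilon_i}| \leq \mathrm{diam}(N) + d(u_{\varepsilon_i},N)$ and split $E_{\varepsilon_i,\delta_i}$ at the threshold $d(u_{\varepsilon_i},N) = \sqrt{t_0}$: on the inner slice one uses $\sqrt{t_0}\,\mu(E_{\varepsilon_i,\delta_i}) \to 0$, and on the outer slice one invokes the far-field growth $f(t) \geq c_4 t^{1/2}$ in (\ref{H1}), which gives $F(u_{\varepsilon_i}) \geq c_4\, d(u_{\varepsilon_i},N)$ whenever $d^2 \geq t_0$; integrating against the energy bound produces another $O(\varepsilon_i)$. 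Summing the three estimates yields $\|u_{\varepsilon_i} - v\|_{L^1} \to 0$, and the mass identity then follows from the Lipschitz argument described above, completing both assertions of the claim.
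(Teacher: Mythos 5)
Your proposal is correct and follows essentially the same route as the paper: both split $u_{\varepsilon_i}-v$ through the intermediate function $v_i$, control $|u_{\varepsilon_i}-\Pi(u_{\varepsilon_i})|$ on the good set via $F\geq c_1 d^2(\cdot,N)$, the energy bound and Cauchy--Schwarz, control $\int_{E_{\varepsilon_i,\delta_i}}|u_{\varepsilon_i}|$ via the measure bound $\mu(E_{\varepsilon_i,\delta_i})\leq C\varepsilon_i$ together with the far-field coercivity $f(t)\geq c_4 t^{1/2}$, and then pass the mass constraint to the limit using the Lipschitz continuity of $\rho$. The only differences (splitting the bad set at a threshold in $d(u,N)$ rather than in $|u|$, and the harmless extra $\int_{E_{\varepsilon_i,\delta_i}}|v|$ term) are cosmetic.
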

On the one hand, 
\begin{equation}
  \int_{\Omega_{\varepsilon_i, \delta_i}^\pm} |u_{\varepsilon_i} - \Pi(u_{\varepsilon_i})| \, dx 
  \leq C \int_{\Omega_{\varepsilon_i, \delta_i}^\pm} \sqrt{F(u_{\varepsilon_i})} \, dx \leq C \sqrt{\varepsilon_i} \to 0.
\end{equation}
On the other hand, as in (\ref{l9}), we have
\begin{equation}
\begin{aligned}
   &\int_{E_{\varepsilon_i, \delta_i}}|u_{\varepsilon_i}|dx\\
  \leq &C_0\mu\left(E_{\varepsilon_i, \delta_i}\right)+\int_{\{|u_{\varepsilon_i}|\geq
  C_0\}\cap E_{\varepsilon_i, \delta_i} }|u_{\varepsilon_i}|dx\\
  \leq &C\varepsilon_i+\frac{2}{c_4}\int_{\Omega}F(u_{\varepsilon_i})dx\\
  \leq &C\varepsilon_i\to 0.
\label{4.24}
\end{aligned}
\end{equation}
These two inequalities, combined with
\begin{equation}
  \Pi(u_{\varepsilon_i}) \chi_{\Omega_{\varepsilon_i, \delta_i}^+ \cup \Omega_{\varepsilon_i, \delta_i}^-} \to v \quad \text{in } L^1(\Omega), 
\end{equation}
implies $u_{\varepsilon_i}\to v$ in $L^1(\Omega)$. In particular, 
\begin{equation}
  \begin{aligned}
     &\left|\int_\Omega\rho(u_{\varepsilon_i})dx-\int_\Omega\rho(v)dx\right|\\
    \leq&\int_\Omega|\rho(u_{\varepsilon_i})-\rho(v)|dx\\  
    \leq&Lip(\rho)\int_\Omega|u_{\varepsilon_i}-v|dx\to 0, 
  \end{aligned}
\end{equation}
which implies $\int_\Omega\rho(v)dx=m$. 

So far, we have proved all the conclusions stated in Theorem \ref{Theorem 3}.
\end{proof}

\section{ The results for the type II mass constraint
 \label{section 5}}
In the section, we summarize all the results for the type II mass constraint. Assume
\begin{equation}
\widetilde{F}(p)=\widetilde{f}\left(d_0(p, N)\right), \label{1.1.4011}
\end{equation}
where $d_0(\cdot, N)$ is defined in (\ref{d_0 def}). Following the settings described in \cite{leoni2016second}, we choose $\widetilde{f}$ with the following properties:
\begin{equation}
 \begin{aligned}
&1.\text{ }\widetilde{f} \text{ is of class } C^{2}(\mathbb{R} \backslash\{-\frac{d_N}{2}, \frac{d_N}{2}\}) \text{ and has precisely two zeros at } \pm\frac{d_N}{2}, \\
&2. \text{ }\lim _{s \to -\frac{d_N}{2}} \frac{\widetilde{f}^{\prime \prime}(s)}{|s+\frac{d_N}{2}|^{q-1}}=\lim _{s \to \frac{d_N}{2}} \frac{\widetilde{f}^{\prime \prime}(s)}{|s-\frac{d_N}{2}|^{q-1}}:=\ell>0, \quad q \in(0, 1), \\
&3. \text{ }\widetilde{f}^{\prime} \text { has exactly } 3 \text { zeros at } -\frac{d_N}{2}<c<\frac{d_N}{2}, \quad \widetilde{f}^{\prime \prime}(c)<0, \\
& 4.\text{ }\liminf _{|s| \to \infty}\left|\widetilde{f}^{\prime}(s)\right|>0.
\end{aligned}\label{wide f}
\end{equation} 
Here we only consider $q\in(0, 1)$, as in the case, the support set of the optimal profile of the phase transition is bounded, which plays an essential role in characterizing the expansion of the below $\widetilde{ \mathbf{E}}_\varepsilon$. Define $z$ as the unique global solution to the Cauchy problem
\begin{equation}
 \begin{cases}z^{\prime}(t)=\sqrt{\widetilde{f}(z(t))} & \text { for } t \in \mathbb{R}, \\ z(0)=c, & z(t) \in[-\frac{d_N}{2}, \frac{d_N}{2}].\end{cases}\label{z}
\end{equation}
The function $z$ is called the \textit{optimal profile} of the phase transition. By integration, 
\begin{equation}
 t=\int_0^{z(t)}\frac{ds}{\sqrt{\widetilde{f}(s)}}, 
\end{equation}
which is finite thanks to (\ref{wide f}), since $q<1$. It follows that there exist $t_1, t_2\in \mathbb{R}$ such
$z(t)=\frac{d_N}{2}$ for $t\geq t_2$ and $z(t)=-\frac{d_N}{2}$ for $t\leq t_1$.

We consider the mass constraint
\begin{equation}
\int_{\Omega}d_0(u, N)dx=m, \label{1.12}
\end{equation}
where $-\frac{d_N}{2}< m<\frac{d_N}{2}$.

Let $u\in H^1(\Omega)$, define the energy functional 
\begin{equation*}
\widetilde{\mathbf{E}}_{\varepsilon}(u):= 
\int_{\Omega}\left(|\nabla u|^{2}+\frac{1}{\varepsilon^{2}}\widetilde{ F}(u)\right) d x, 
\end{equation*}
and
\begin{equation}
\widetilde{\mathbf{E}}_{\varepsilon}:=\inf_{u\in H^1(\Omega)} \left\{
\widetilde{\mathbf{E}}_{\varepsilon}(u):\int_{\Omega}d_0(u, N)dx=m
\right\}, \label{1.90}
\end{equation}
where $m\in (-\frac{d_N}{2}, \frac{d_N}{2})$.

Similar to \cite{leoni2016second}, to characterize $\widetilde{\mathbf{E}}_{\varepsilon} $, we also assume $I_{\Omega}(\sigma)$ admits a Taylor expansion of order $2$ at $\sigma_m$:
\begin{equation}
I_{\Omega}(\sigma)=I_{\Omega}\left(\sigma_m\right)+I_\Omega^{\prime}\left(\sigma_m\right)\left(\sigma-\sigma_m\right)+O\left(\left|\sigma-\sigma_m\right|^{1+\beta}\right),\label{taylor}\tag{$\widetilde{G}$}
\end{equation}
where $\sigma_m=\frac{\frac{d_N}{2}-m}{d_N}$, and $\beta \in(0, 1]$. For domains of class $C^2$, $I_{\Omega}$ is semi-concave (see \cite{bavard1986volume}) and (\ref{taylor}) holds with $\beta= 1$ at $L^1$ a.e. $\sigma_m$ in $[0, 1]$ (see \cite{urruty1993convex}).
For $n$, we also restrict $n\leq7$ to guarantee regularity of the minimizers of the problem (\ref{minimizers}). 

Now, we state our results.
\begin{theorem}
Assume $n\leq7$, and $ I_\Omega$ satisfies (\ref{taylor}). Assume
\begin{equation}
 \widetilde{F}(p)=\widetilde{f}\left(d_0(p, N)\right), 
\end{equation}
and $\widetilde{\mathbf{E}}_{\varepsilon}$ is defined as in (\ref{1.9}). Then, for small $\varepsilon$, we have

\begin{equation}
 \widetilde{\mathbf{E}}_{\varepsilon} = \frac{c_0^{\widetilde{F}} I_\Omega({\sigma_m})}{\varepsilon}+\left(c_{\mathrm{sym}}+c_0^{\widetilde{F}} \tau_{0}\right)(n-1) \kappa_{0}I_\Omega({\sigma_m})+o(1).
\end{equation}

Here $\kappa_{0}$ is the constant mean curvature of the set $ \partial E\cap\Omega$, where $E$ minimizes (\ref{minimizers}) at $\sigma_m=\frac{\frac{d_N}{2}-m}{d_N}$ and $c_{\mathrm{sym}}$ is a constant defined by
\begin{equation*}
c_{\mathrm{sym}}:=2\int_{\mathbb{R}}\widetilde{ f}(z(t)) t d t. 
\end{equation*}

Here $\tau_{0}$ is a constant such that
\begin{equation*}
\int_{\mathbb{R}} z\left(t-\tau_{0}\right)-\operatorname{sgn}_{\pm\frac{d_N}{2}}(t) d t=0, 
\end{equation*}
where $z$ is defined in (\ref{z}); $\operatorname{sgn}_{\pm\frac{d_N}{2}}(t)=\frac{d_N}{2}$ for $t\geq0$ and $\operatorname{sgn}_{\pm\frac{d_N}{2}}(t)=-\frac{d_N}{2}$ for $t<0$.

Here $c_0^{\widetilde{F}}$ is a constant defined by
\begin{equation}
 c_0^{\widetilde{F}}:=2\int_{-\frac{d_N}{2}}^{\frac{d_N}{2}}\sqrt{\widetilde{f}(\lambda)}d\lambda.
\end{equation}
\label{theorem 4 }
\end{theorem}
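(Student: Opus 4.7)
The plan is to reduce the vectorial minimization \eqref{1.90} to the scalar Cahn--Hilliard-type problem analyzed in \cite{leoni2016second} by passing to the composition $w := d_0(u,N)$, and then to invoke the second-order asymptotic expansion established there. The key observation is that $d_0(\cdot, N)$ is globally $1$-Lipschitz on $\mathbb{R}^k$ (each of the three branches in \eqref{d_0 def} is $1$-Lipschitz and they agree on the overlaps), so for every $u \in H^1(\Omega,\mathbb{R}^k)$ the composition $w = d_0(u,N)$ lies in $H^1(\Omega)$ and satisfies both $|\nabla w| \le |\nabla u|$ a.e.\ and the pointwise identity $\widetilde{F}(u) = \widetilde{f}(w)$, while the constraint \eqref{1.12} becomes the scalar mass constraint $\int_\Omega w\,dx = m$. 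This already gives the lower bound $\widetilde{\mathbf{E}}_\varepsilon \ge E^{\mathrm{sc}}_\varepsilon$, where $E^{\mathrm{sc}}_\varepsilon$ denotes the scalar infimum.

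For the matching upper bound I would construct vector competitors along an affine transversal. Fix $(p^+,p^-) \in M^+ \times M^-$ with $|p^+-p^-|=d_N$ and set
$$
\Phi(s) := \frac{p^+ + p^-}{2} + s\cdot\frac{p^+ - p^-}{d_N},\qquad s \in \bigl[-\tfrac{d_N}{2},\tfrac{d_N}{2}\bigr].
$$
A direct triangle-inequality argument, using $|p^- - q| \ge d_N$ for every $q \in N^+$, yields $d(\Phi(s), N^+) = \tfrac{d_N}{2} - s$ for $s \ge 0$, with the symmetric identity for $N^-$ and $s \le 0$; consequently $d_0(\Phi(s),N) = s$ identically on the segment. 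Since $|\Phi'|\equiv 1$, given any admissible scalar $w$ taking values in $[-\tfrac{d_N}{2},\tfrac{d_N}{2}]$, the map $u := \Phi \circ w$ is admissible for \eqref{1.90} with $|\nabla u| = |\nabla w|$ and $\widetilde{F}(u) = \widetilde{f}(w)$, reproducing the scalar energy exactly. Property (4) of \eqref{wide f} makes $\widetilde{f}$ coercive outside $[-\tfrac{d_N}{2},\tfrac{d_N}{2}]$, so one may restrict scalar competitors to this range via truncation combined with a small Lagrange-multiplier adjustment restoring $\int_\Omega w\,dx = m$, in the spirit of the construction of $\tau_\varepsilon$ in Section \ref{section 3.1}, at a cost of only $o(1)$. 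Hence $\widetilde{\mathbf{E}}_\varepsilon = E^{\mathrm{sc}}_\varepsilon + o(1)$.

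Having reduced to the scalar double-well problem with wells at $\pm \tfrac{d_N}{2}$, potential $\widetilde{f}$ satisfying \eqref{wide f}, mass constraint $\int_\Omega w\,dx = m$, and isoperimetric profile satisfying \eqref{taylor} at $\sigma_m$, I would apply the main theorem of \cite{leoni2016second} verbatim, modulo the trivial rescaling from their $\pm 1$ wells to $\pm \tfrac{d_N}{2}$. The restriction $n \le 7$ guarantees that the isoperimetric minimizer $\partial E \cap \Omega$ is a smooth constant-mean-curvature hypersurface intersecting $\partial \Omega$ orthogonally, so $\kappa_0$ is well-defined, and the constants $c_0^{\widetilde{F}}$, $c_{\mathrm{sym}}$, and $\tau_0$ produced by their expansion coincide with those in the theorem statement, since all are determined by the optimal profile $z$ from \eqref{z}.

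The main obstacle is the claim that the reduction to scalar range is accurate at the $o(1)$ level, not merely at leading order. For a first-order expansion analogous to Theorem \ref{theorem 1} this is standard, but preserving the $O(1)$ refinement required here demands exploiting the exponential decay of the optimal profile $z$ near $\pm \tfrac{d_N}{2}$ (encoded in properties (2) and (4) of \eqref{wide f}, together with the profile ODE \eqref{z}) to control both the energy cost of the truncation and the mass-correction quantitatively. Beyond that, the argument is a direct transplant of \cite{leoni2016second}.
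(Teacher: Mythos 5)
Your proposal is correct and follows essentially the same route as the paper: the lower bound via the $1$-Lipschitz composition $w=d_0(u,N)$ (which preserves $\widetilde F(u)=\widetilde f(w)$ and the constraint) followed by the scalar second-order expansion of \cite{leoni2016second} — the paper merely makes the intermediate P\'olya--Szeg\H{o} reduction to the one-dimensional functional explicit — and the upper bound via the affine transversal $\Phi$ composed with the optimal profile, which is exactly the paper's competitor $v_\varepsilon=\frac{p^++p^-}{2}+z_\varepsilon(d_\Gamma(x))\frac{p^+-p^-}{|p^+-p^-|}$. The truncation-plus-mass-adjustment step you flag as the main obstacle is avoidable: rather than transplanting an arbitrary scalar near-minimizer, one composes $\Phi$ directly with the explicit profile competitor from the scalar upper-bound construction, whose range already lies in $[-\tfrac{d_N}{2},\tfrac{d_N}{2}]$.
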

\begin{remark}
 This result is consistent with the results for the potentials of one-dimensional wells in \cite{leoni2016second}.
\end{remark}
 We list some important properties of $d_0(\cdot, N)$ here (refer to (\ref{d_0 def}) for its definition). 
\begin{lemma}
 The function $d_0(\cdot, N):\mathbb{R}^k\to[\frac{d_N}{2}, \frac{d_N}{2}]$ is Lipschitz continuous on $\mathbb{R}^k$, and satisfies
 \begin{align}
 &1. |\nabla d_0(\cdot, N)|\leq 1 \quad \text{ a.e on } \mathbb{R}^k. \label{d_0 1}\\
 & 2. d_0(p, N) =\begin{cases}
 - \frac{d_N}{2}\quad &\text{ if and only if }\quad q\in N^-,\\
 \frac{d_N}{2}\quad &\text{ if and only if }\quad q\in N^+.\label{d_0 2}
 \end{cases}
 \end{align}\label{Lemma 5.1}
\end{lemma}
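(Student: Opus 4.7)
The plan is to analyze $d_0(\cdot, N)$ by exploiting the fact that $d(\cdot, N^{\pm})$ are both $1$-Lipschitz and that the three cases in definition (\ref{d_0 def}) paste together continuously. A key preliminary observation is that, thanks to the definition (\ref{d_N define}) of $d_N$, the regions $\{d(\cdot,N^-)\le d_N/2\}$ and $\{d(\cdot,N^+)\le d_N/2\}$ can overlap only on the set where both distances equal $d_N/2$; indeed, if $d(p,N^-)<d_N/2$ and $d(p,N^+)<d_N/2$, the triangle inequality would force two points in $N^\pm$ closer than $d_N$. Thus the three cases in (\ref{d_0 def}) really do partition $\mathbb{R}^k$ up to a measure-zero interface, and on each interface the formulas agree (they all give $0$ on $\{d(\cdot,N)=d_N/2\}$), so $d_0$ is continuous.

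For (\ref{d_0 1}), the cleanest route is to verify the global representation
\begin{equation*}
d_0(p,N) \;=\; \min\!\left(\tfrac{d_N}{2},\,d(p,N^-)\right) \;-\; \min\!\left(\tfrac{d_N}{2},\,d(p,N^+)\right),
\end{equation*}
by checking each of the three regions separately (using the overlap observation above). Each summand is $1$-Lipschitz as a composition of $1$-Lipschitz maps, so $d_0$ is Lipschitz; Rademacher's theorem then yields differentiability $\mathcal{L}^k$-a.e. Next, on the open interior of the region $\{d(\cdot,N^-)<d_N/2\}$ we have $\nabla d_0=\nabla d(\cdot,N^-)$, on the interior of $\{d(\cdot,N^+)<d_N/2\}$ we have $\nabla d_0=-\nabla d(\cdot,N^+)$, and on the interior of $\{d(\cdot,N)>d_N/2\}$ we have $\nabla d_0=0$. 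In each of the first two regions the a.e.\ bound $|\nabla d(\cdot,N^\pm)|\le 1$ (standard for distance functions) gives the claim; the exceptional set is a finite union of level sets of Lipschitz functions, hence $\mathcal{L}^k$-negligible.

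For (\ref{d_0 2}), I would argue directly from the piecewise definition. On $\{d(\cdot,N^-)\le d_N/2\}$, $d_0=-d_N/2$ forces $d(p,N^-)=0$, and since $N^-$ is compact this is equivalent to $p\in N^-$; on the other two regions $d_0\ge 0>-d_N/2$, so no extra solutions appear. The characterization $d_0(p,N)=d_N/2\iff p\in N^+$ is symmetric.

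The main technical point is the verification of the $\min$–$\min$ representation, because it is the one step where the three defining cases have to be reconciled simultaneously rather than treated in isolation; once this identity is established everything else reduces to classical facts about $1$-Lipschitz distance functions and their a.e.\ gradients.
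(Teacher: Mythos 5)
Your argument is correct in substance but follows a genuinely different route from the paper. The paper proves the sharp global inequality $|d_0(p,N)-d_0(q,N)|\le|p-q|$ directly, case by case on the signs of $d_0(p,N)$ and $d_0(q,N)$: in the mixed-sign case it writes $d_0(q,N)-d_0(p,N)=d_N-(d(q,q')+d(p,p'))$ for nearest points $p'\in N^-$, $q'\in N^+$ and bounds this by $|p-q|$ using $d_N\le|p'-q'|$ and the triangle inequality. This yields the Lipschitz constant $1$ at once, and the a.e.\ bound $|\nabla d_0|\le 1$ is then immediate from Rademacher. You instead establish the global identity $d_0(p,N)=\min(\tfrac{d_N}{2},d(p,N^-))-\min(\tfrac{d_N}{2},d(p,N^+))$ (which is valid, and your overlap observation is exactly what makes the three cases consistent), deduce Lipschitz continuity, and then recover the gradient bound by a local analysis on the three open regions. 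The trade-off is that your representation only gives Lipschitz constant $2$ a priori, so the sharp bound $|\nabla d_0|\le 1$ genuinely depends on the interface sets being $\mathcal{L}^k$-negligible, whereas the paper's pairwise estimate needs no such measure-theoretic input.

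That negligibility step is the one place where your justification, as stated, does not hold up: level sets of Lipschitz functions are \emph{not} in general Lebesgue-null (a Lipschitz function can be constant on a set of positive measure). What saves you is the specific structure of distance functions: for a closed set $S$ and $x\notin S$ with nearest point $y$, one has $d(x-t\frac{x-y}{|x-y|},S)=d(x,S)-t$ for small $t>0$, so $|\nabla d(\cdot,S)|=1$ a.e.\ on $\{d(\cdot,S)>0\}$, and a positive-measure level set $\{d(\cdot,S)=c\}$ with $c>0$ would force $\nabla d=0$ at its density points, a contradiction. With that substitution the exceptional set $\{d(\cdot,N^-)=\tfrac{d_N}{2}\}\cup\{d(\cdot,N^+)=\tfrac{d_N}{2}\}$ is indeed null and your proof closes; alternatively, you could simply run the paper's two-point estimate, which avoids the issue entirely. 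Your treatment of the characterization of the sets $\{d_0=\pm\tfrac{d_N}{2}\}$ matches the paper's (both read it off the definition).
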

\begin{proof}
 
 1. If $d_0(p, N)<0$ and $d_0(q, N)> 0$. Since $N^\pm$ is compact, let $d_0(p, N)=d(p, p^\prime)- \frac{d_N}{2}$ and $d_0(q, N)=\frac{d_N}{2}-d(q, q^\prime)$, where $p^\prime\in N^-\text{ and }q^\prime\in N^+$. Then, from the definition, we get
 \begin{equation}
 \begin{aligned}
 0 \leq d_0(q, N)-d_0(p, N)&= d_N-(d(q, q\prime)+d(p, p\prime))\\
 &\leq d(p^\prime, q^\prime)-(d(q, q\prime)+d(p, p\prime))\\
 &\leq d(p, q).
 \end{aligned}
 \end{equation}
 If $d_0(p, N)<0$ and $d_0(q, N)=0$. A similar argument gives 
 \begin{equation}
 \begin{aligned}
 0\leq d_0(q, N)-d_0(p, N)&=\frac{d_N}{2}-d(p, N^-)\\
 & \leq d(q, N^-)-d(p, N^-)\\
 & \leq d(p, q).
 \end{aligned}
 \end{equation}
 Other cases can be treated in a similar way, ensuring that $d_0(\cdot, N)$ is Lipschitz continuous on $\mathbb{R}^k$ and (\ref{d_0 1}) holds.
 
 2. The formula (\ref{d_0 2}) follows directly from the definition (\ref{d_0 def}).
\end{proof}
\subsection{P\'olya-Szeg{\H o} type inequality}
To obtain the lower bound of $ \widetilde{\mathbf{E}}_{\varepsilon}$, we use the P\'olya-Szeg{\H o} inequality to reduce the problem to one dimension. According to the classical P\'olya-Szeg{\H o} inequality, for a positive function \(u \in W^{1, p}(\mathbb{R}^n) \), its decreasing spherical rearrangement \(u^{*} \) will not increase the \(L^p \) norm of the gradient in \(\mathbb{R}^n \). The techniques are identical to those used in \cite{leoni2016second}, we include all required propositions.

We first recall that the isoperimetric function $I_\Omega$ satisfies (see, \cite{maz2011sobolev,cianchi2008neumann, alberico2007borderline}) 
\begin{equation}
 I_\Omega(\sigma)\geq C_1 {\min\{\sigma, 1-\sigma\}}^{\frac{n-1}{n}}\quad\text{ for all } \sigma\in[0, 1].\label{propoey}
\end{equation}
We need the following proposition.
\begin{proposition}{(\cite{leoni2016second}, Proposition 3.1)}
 Suppose that $I_{\Omega}$ satisfies (\ref{taylor}). Then there exists a function $I_{\Omega}^{*} \in C_{\mathrm{loc}}^{1, \beta}(0, 1)$ such that
\begin{align}
I_{\Omega}^{*}(\sigma) & =I_{\Omega}^{*}(1-\sigma) \quad \text { for all } \sigma\in(0, 1), \label{identy} \\
I_{\Omega} & \geq I_{\Omega}^{*}>0 \quad \text { in }(0, 1), \\
I_{\Omega}\left(\sigma_m\right) & =I_{\Omega}^{*}\left(\sigma_m\right), \quad I_{\Omega}^{\prime}\left(\sigma_m\right)=\left(I_{\Omega}^{*}\right)^{\prime}\left(\sigma_m\right), \\
I_{\Omega}^{*}(\sigma) & =C_{0} \sigma^{\frac{n-1}{n}} \quad \text { for all } \sigma \in(0, \delta), \text{ for } C_{0}>0 \text{ and } 0<\delta<1.
\end{align}\label{proposition i*}
\end{proposition}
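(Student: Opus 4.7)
The plan is to construct $I_\Omega^*$ piecewise on $(0,1)$: a power function $C_0\sigma^{(n-1)/n}$ near the endpoints, a carefully perturbed tangent line to $I_\Omega$ near $\sigma_m$ (and its reflection $1-\sigma_m$), and smooth monotone interpolation between them. I would exploit the symmetry $I_\Omega(\sigma)=I_\Omega(1-\sigma)$, which holds because a set and its complement in $\Omega$ share the same perimeter, to construct $I_\Omega^*$ only on $(0,1/2]$ and extend by $I_\Omega^*(\sigma):=I_\Omega^*(1-\sigma)$; without loss of generality I assume $\sigma_m\leq 1/2$.

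Near $\sigma=0$, the bound (\ref{propoey}) gives $I_\Omega(\sigma)\geq C_1\sigma^{(n-1)/n}$, so for any $C_0\in(0,C_1)$ and $\delta\in(0,\sigma_m)$ small, the function $\psi(\sigma):=C_0\sigma^{(n-1)/n}$ sits strictly below $I_\Omega$ on $(0,\delta]$, and I would set $I_\Omega^*:=\psi$ there. Near $\sigma_m$, the Taylor hypothesis (\ref{taylor}) says the tangent $L(\sigma):=I_\Omega(\sigma_m)+I_\Omega^{\prime}(\sigma_m)(\sigma-\sigma_m)$ differs from $I_\Omega$ by $O(|\sigma-\sigma_m|^{1+\beta})$. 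Choosing $K>0$ large and $\eta>0$ small, the perturbed function $\tilde L(\sigma):=L(\sigma)-K|\sigma-\sigma_m|^{1+\beta}$ is $C^{1,\beta}$, sits strictly below $I_\Omega$ on $(\sigma_m-\eta,\sigma_m+\eta)$, and satisfies $\tilde L(\sigma_m)=I_\Omega(\sigma_m)$ and $\tilde L^{\prime}(\sigma_m)=I_\Omega^{\prime}(\sigma_m)$; I would set $I_\Omega^*:=\tilde L$ there.

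On the intermediate interval $[\delta,\sigma_m-\eta]$ I would interpolate between $\psi$ and $\tilde L$ by a smooth convex combination via a $C^\infty$ cutoff; because both pieces are strictly below the continuous $I_\Omega$, the cutoff scale can be chosen so that the interpolant remains below $I_\Omega$. To make the symmetric extension $C^{1,\beta}$ across $\sigma=1/2$, I would further arrange $(I_\Omega^*)^{\prime}(1/2)=0$ on a small neighborhood of $1/2$, which is compatible with $I_\Omega^*\leq I_\Omega$ because $I_\Omega^{\prime}(1/2)=0$ by symmetry whenever the derivative exists there. Extending to $(1/2,1)$ by reflection then yields all the asserted properties.

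The main obstacle is the tension between the tangential matching at $\sigma_m$ (by (\ref{taylor}) this can only hold to order $1+\beta$) and the global bound $I_\Omega^*\leq I_\Omega$: the perturbation $-K|\sigma-\sigma_m|^{1+\beta}$ limits the regularity at $\sigma_m$ to exactly $C^{1,\beta}$, so $K$ must be taken large enough to dominate the Taylor remainder from (\ref{taylor}) while still permitting a smooth glue to $\psi$ on $[\delta,\sigma_m-\eta]$. The marginal case $\sigma_m=1/2$ needs separate checking: tangency and symmetry are compatible there because $I_\Omega^{\prime}(1/2)=0$ forces $L$ to be horizontal, so the symmetric reflection is automatically $C^{1,\beta}$ without any further correction.
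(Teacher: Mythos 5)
The paper itself offers no proof of this proposition --- it is quoted verbatim from Leoni--Murray (\cite{leoni2016second}, Proposition 3.1) and used as a black box --- so there is nothing internal to compare against; your construction (power profile $C_0\sigma^{(n-1)/n}$ near the endpoints via (\ref{propoey}), a tangent line depressed by $-K|\sigma-\sigma_m|^{1+\beta}$ near $\sigma_m$ to exploit (\ref{taylor}), symmetric reflection about $1/2$) is the natural one and captures the right ideas, including the correct observations that $I_\Omega(\sigma)=I_\Omega(1-\sigma)$ forces $I_\Omega'(1/2)=0$ and that the perturbation exponent $1+\beta$ is exactly what limits $I_\Omega^*$ to $C^{1,\beta}$ at $\sigma_m$.

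The one step that does not work as written is the gluing on $[\delta,\sigma_m-\eta]$. You assert that "both pieces are strictly below the continuous $I_\Omega$" there and take a convex combination $(1-\chi)\psi+\chi\tilde L$. But the two requirements you place on $K$ pull in opposite directions: to guarantee $\tilde L\leq I_\Omega$ on the \emph{whole} gluing interval you need $K$ large enough that $K|\sigma-\sigma_m|^{1+\beta}$ dominates $(L-I_\Omega)_+$ there (the tangent line at $\sigma_m$ can lie far above $I_\Omega$ away from $\sigma_m$, since in the type II setting $\sigma_m$ need not minimize $I_\Omega$), while keeping $\tilde L>0$ on that interval caps $K|\sigma-\sigma_m|^{1+\beta}$ from above. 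For large $K$ the function $\tilde L$ is strongly negative at distance $\gtrsim\eta$ from $\sigma_m$, and then $(1-\chi)\psi+\chi\tilde L$ is negative wherever $\chi$ is close to $1$, destroying the claim $I_\Omega^*>0$. The fix is standard but should be said: use $\tilde L$ only on $[\sigma_m-\eta,\sigma_m+\eta]$ with $K$ fixed first ($K$ larger than the constant in the $O(|\sigma-\sigma_m|^{1+\beta})$ remainder of (\ref{taylor})) and $\eta$ then shrunk so that $K\eta^{1+\beta}<\tfrac12 I_\Omega(\sigma_m)$, and on $[\delta,\sigma_m-\eta]$ construct the interpolant directly as any $C^{1,\beta}$ curve matching the two endpoint $1$-jets and staying inside the open tube $\{(\sigma,y):0<y<I_\Omega(\sigma)\}$, which is possible because $I_\Omega$ is locally Lipschitz and bounded below by $C_1\delta^{(n-1)/n}>C_0\delta^{(n-1)/n}=\psi(\delta)$ on that compact interval. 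With that repair (and the same care at the constant plateau near $1/2$), the argument goes through.
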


We define the function \(V_{\Omega} \) as the solution to the following Cauchy problem:
\begin{equation}
 \frac{{d}}{{d}t} V_{\Omega}(t) = I_{\Omega}^{*}(V_{\Omega}(t)), \quad V_{\Omega}(0) = \frac{1}{2}.
\end{equation}
From inequality (\ref{propoey}), there exists \(T > 0 \) such that \(0 < V_{\Omega}(t) \) for \(-T < t < 0 \), and moreover, \(V_{\Omega}(-T) = 0 \). Furthermore, from equation (\ref{identy}), we conclude that \(V_{\Omega}(T) = 1 \) and \(V_{\Omega}(t) < 1 \) for all \(0 < t < T \). Define
\begin{equation}
 I:=(-T, -T).\label{interal}
\end{equation}

For $y \in \mathbb{R}^{n}$, denote $y = (y', y_n) \in \mathbb{R}^{n-1} \times \mathbb{R}$. We define a rearranged version of $\Omega$ as 
\begin{equation}
 \Omega^{*}:= \left\{ y:y_n \in I, \, y' \in B_{n-1}(0, r(y_n)) \right\}, 
\end{equation}
where for each $t \in I$, 
\begin{equation}
 r(t):= \left(\frac{I_{\Omega}^{*}(V_{\Omega}(t))}{\alpha_{n-1}} \right)^{1/(n-1)}, \quad \alpha_{n-1}:= L^{n-1}\left(B_{n-1}(0, 1) \right), 
\end{equation}
and \(\alpha_{n-1} \) represents the \((n-1)\)-dimensional Lebesgue measure of the unit ball \(B_{n-1}(0, 1) \). 

Note that the definition of \(r(t) \) implies the relation
\begin{equation}
L^{n-1}\left(B_{n-1}(0, r(t)) \right) = I_{\Omega}^{*}(V_{\Omega}(t)) 
\end{equation}
holds for all $ t \in I $.

For any measurable function \(u: \Omega \to \mathbb{R} \), we define the distribution function \(\varrho_{u}(s):= L^{n}(\{ u > s \}) \), we introduce the following function:
\begin{equation}
 g_u(t):= \sup \left\{ s \in \mathbb{R}: \varrho_{u}(s) > V_{\Omega}(t) \right\}. 
\end{equation}
We also define the function \(P_u(t) \) as
\begin{equation}
P_u(t):= g_u(-t).
\end{equation}
Now we give the main result of this subsection. 
\begin{proposition}{(\cite{leoni2016second}, Corollary 3.12)}
 Let $u \in H^{1}(\Omega)$. Then the following inequality holds:
\begin{equation*}
\int_{\Omega}\left(|\nabla u|^{2} +\frac{1}{\varepsilon^{2}}\widetilde{f}(u)\right)d x \geq \int_{I}\left(\left(P_{u}^{\prime}\right)^{2}+ \frac{1}{\varepsilon^{2}}\widetilde{f}\left(P_{u}\right)\right) I_{\Omega}^{*}\left(V_{\Omega}\right) dt.
\end{equation*}
 Moreover, the integral constraint satisfies 
 $$
\int_{\Omega} u d x=\int_{I} P_u I_{\Omega}^{*}\left(V_{\Omega}\right) d t.
$$
\label{polya}
\end{proposition}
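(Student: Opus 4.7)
The plan is to reduce the problem to one dimension by showing that the map $u \mapsto P_u$ preserves the distribution function with respect to the weighted one-dimensional measure $d\mu := I_\Omega^*(V_\Omega(t))\,dt$ on $I$, which handles the potential term and the mass constraint immediately via Cavalieri's principle, and then to establish the gradient inequality through a combination of the co-area formula, the Cauchy--Schwarz inequality, and the isoperimetric inequality $H^{n-1}(\partial^*E\cap\Omega)\ge I_\Omega^*(\mu(E))$ (which holds by Proposition \ref{proposition i*}).

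First I would verify the key symmetry $V_\Omega(t)+V_\Omega(-t)=1$ for $t\in I$: setting $W(t):=1-V_\Omega(-t)$, one computes $W(0)=1/2$ and $W'(t)=I_\Omega^*(V_\Omega(-t))=I_\Omega^*(1-W(t))=I_\Omega^*(W(t))$ by (\ref{identy}), so $W=V_\Omega$ by uniqueness of the Cauchy problem. In particular, $I_\Omega^*(V_\Omega(t))=I_\Omega^*(V_\Omega(-t))$ and $d\mu = dV_\Omega(t)$, so $V_\Omega$ pushes $\mu$ forward to Lebesgue measure on $(0,1)$. From the definition $P_u(t)=g_u(-t)$, the set $\{t\in I : P_u(t)>s\}$ equals $\{t : V_\Omega(-t)<\varrho_u(s)\}$, whose $\mu$-measure equals $L^1(\{a\in(0,1): 1-a<\varrho_u(s)\})=\varrho_u(s)=L^n(\{u>s\})$. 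By the layer-cake identity applied to $\widetilde{f}$ and to the identity function, this yields
\begin{equation*}
\int_\Omega \widetilde{f}(u)\,dx=\int_I \widetilde{f}(P_u)\,I_\Omega^*(V_\Omega)\,dt,\qquad \int_\Omega u\,dx=\int_I P_u\,I_\Omega^*(V_\Omega)\,dt,
\end{equation*}
settling the potential term and the mass constraint.

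For the gradient term, the co-area formula gives $\int_\Omega|\nabla u|^2\,dx=\int_{\mathbb{R}}\bigl(\int_{\{u=s\}}|\nabla u|\,dH^{n-1}\bigr)\,ds$ and $-\varrho_u'(s)=\int_{\{u=s\}}|\nabla u|^{-1}\,dH^{n-1}$ for a.e.\ $s$. Cauchy--Schwarz yields
\begin{equation*}
H^{n-1}(\{u=s\})^2\le\Bigl(\int_{\{u=s\}}|\nabla u|\,dH^{n-1}\Bigr)(-\varrho_u'(s)).
\end{equation*}
The isoperimetric inequality combined with Proposition \ref{proposition i*} gives $H^{n-1}(\{u=s\})\ge I_\Omega(\varrho_u(s))\ge I_\Omega^*(\varrho_u(s))$, so
\begin{equation*}
\int_\Omega|\nabla u|^2\,dx\ge\int_{\mathbb{R}}\frac{I_\Omega^*(\varrho_u(s))^2}{-\varrho_u'(s)}\,ds.
\end{equation*}
The change of variables $s=P_u(t)$, together with the identity $\varrho_u(P_u(t))=V_\Omega(-t)$ and the differentiated relation $P_u'(t)=I_\Omega^*(V_\Omega(-t))/(-\varrho_u'(P_u(t)))$, turns the right-hand side into $\int_I(P_u')^2 I_\Omega^*(V_\Omega(-t))\,dt=\int_I(P_u')^2 I_\Omega^*(V_\Omega(t))\,dt$ by the symmetry above. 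Adding the potential contribution finishes the inequality.

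The main obstacle is to justify the change of variables rigorously: $\varrho_u$ may have flat pieces (level sets of positive measure) and $g_u$ may have jumps (level sets accumulating nontrivial measure), so $P_u$ need only be monotone and of bounded variation, and the identity $\varrho_u(P_u(t))=V_\Omega(-t)$ holds only up to the exceptional $t$'s where $g_u$ jumps. The standard remedy, as carried out in \cite{leoni2016second}, is to work with the essentially equivalent form that uses generalized inverses and discards critical values of $u$ via Sard's theorem for Sobolev maps, noting that the contribution of critical levels to $\int_\Omega|\nabla u|^2\,dx$ already vanishes there, while the flat pieces of $\varrho_u$ correspond to intervals where $P_u$ is constant and hence contribute nothing to $\int_I(P_u')^2 I_\Omega^*(V_\Omega)\,dt$. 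Once this bookkeeping is done carefully, the pointwise comparison above integrates to the asserted inequality.
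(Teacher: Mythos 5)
The paper gives no proof of this proposition---it is quoted verbatim from \cite{leoni2016second} as Corollary 3.12---so the only meaningful comparison is with the cited source, and your argument reconstructs exactly the standard rearrangement proof that corollary rests on: the symmetry $V_{\Omega}(t)+V_{\Omega}(-t)=1$ via uniqueness for the Cauchy problem, equimeasurability of $u$ and $P_u$ with respect to $L^n$ on $\Omega$ and $I_{\Omega}^{*}(V_{\Omega})\,dt$ on $I$ (which yields the potential term and the mass identity as equalities), and the co-area/Cauchy--Schwarz/isoperimetric chain followed by the substitution $s=P_u(t)$ for the gradient term. The skeleton is correct and you rightly identify where the remaining rigor lives. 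One correction to your closing paragraph: you have the two degeneracies of $\varrho_u$ swapped. A level set $\{u=s_0\}$ of positive measure produces a \emph{jump} of $\varrho_u$ at $s_0$, which makes $P_u$ \emph{constant} on an interval (harmless for $\int_I(P_u')^2 I_{\Omega}^{*}(V_{\Omega})\,dt$); a \emph{flat piece} of $\varrho_u$ on $(s_1,s_2)$ means the essential range of $u$ omits that interval and would make $g_u$, hence $P_u$, jump---but this cannot occur for $u\in H^1(\Omega)$ on a connected domain, since $\int_{s_1}^{s_2}\operatorname{Per}_{\Omega}(\{u>s\})\,ds\le\int_{\{s_1<u\le s_2\}}|\nabla u|\,dx=0$ while $\operatorname{Per}_{\Omega}(\{u>s\})\ge I_{\Omega}(\varrho_u(s))>0$ there. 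With that relabelling, the deferred bookkeeping (Sard-type removal of critical values, the a.e.\ inequality $-\varrho_u'(s)\ge\int_{\{u=s\}}|\nabla u|^{-1}\,dH^{n-1}$ rather than equality) closes as you describe, so I regard the proposal as a correct outline of the same proof the paper delegates to the reference.
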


\subsection{An one-dimensional functional problem}

For $u:I\to \mathbb R\in H^1(\Omega)$, we define one-dimensional energy functional
\begin{equation*}
\widetilde{\mathbf{E}}_{\varepsilon}^1(u):= 
\int_{ I}\left(|\nabla u|^{2}+\frac{1}{\varepsilon^{2}} \widetilde{f}(u)\right) I_{\Omega}^{*}\left(V_{\Omega}\right)d t, 
\end{equation*}
$\widetilde{f}$ is defined as in (\ref{wide f}), $I$ is defined as in (\ref{interal}), $ I_{\Omega}^{*}$ is defined as in Proposition \ref{proposition i*}. We also define
\begin{equation}
\widetilde{\mathbf{E}}_{\varepsilon}^1:=\inf_{u\in H^1(I)} \left\{
\mathbf{E}_{\varepsilon}(u):\int_{I}uI_{\Omega}^{*}\left(V_{\Omega}\right)dt=m
\right\}, \label{1.99}
\end{equation}
where $m\in \left(-\frac{d_N}{2}\int_{I}I_{\Omega}^{*}\left(V_{\Omega}\right)dt, \frac{d_N}{2}\int_{I}I_{\Omega}^{*}\left(V_{\Omega}\right)dt\right)$. For $ \widetilde{\mathbf{E}}_\varepsilon^1 $, we have the following:
\begin{proposition}{(\cite{leoni2016second}, Theorem 4.17)}
Assume $n\leq7$ and $ I_\Omega^*$ satisfies (\ref{taylor}). 
$\widetilde{\mathbf{E}}_{\varepsilon}^1$ is defined in (\ref{1.9}). Then, there exists $\varepsilon_0>0$ such that for $0<\varepsilon\leq \varepsilon_0$, we have
\begin{equation}
 \widetilde{\mathbf{E}}_{\varepsilon}^1\geq\frac{c_0^{\widetilde{F}} I_{\Omega}^{*}(\sigma_m)}{\varepsilon}+\left(c_{\mathrm{sym}}+c_0^{\widetilde{F}} \tau_{0}\right) {(I_{\Omega}^{*})}^{\prime}(\sigma_m)+o(1),
\end{equation}\label{proposition inf}
Here $ c_{\mathrm{sym}}, \tau_0, c_0^{\widetilde{F}}, \sigma_m$ are defined as in Theorem \ref{theorem 4 }.
\end{proposition}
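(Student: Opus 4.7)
The plan is to combine a Modica--Mortola-type slicing estimate with a rescaling/Taylor expansion of the weight $I_\Omega^*(V_\Omega(\cdot))$ around the limiting transition point, in the spirit of \cite{leoni2016second}. First, by the direct method and the coercivity in (\ref{wide f})(4), the infimum in (\ref{1.99}) is attained by some $u_\varepsilon$, and by an equimeasurable rearrangement with respect to the positive weight $I_\Omega^*(V_\Omega)$, one reduces to the case where $u_\varepsilon$ is monotone non-decreasing on $I$. Applying the pointwise inequality $a^2+b^2\geq 2ab$ with $a=|u_\varepsilon'|$, $b=\varepsilon^{-1}\sqrt{\widetilde{f}(u_\varepsilon)}$, and changing variable $\lambda=u_\varepsilon(t)$, yields
\begin{equation*}
\widetilde{\mathbf{E}}_\varepsilon^1(u_\varepsilon)\geq \frac{2}{\varepsilon}\int_{-d_N/2}^{d_N/2}\sqrt{\widetilde{f}(\lambda)}\,I_\Omega^*(V_\Omega(u_\varepsilon^{-1}(\lambda)))\,d\lambda.
\end{equation*}
The mass constraint forces the transition layer to concentrate at a single point $\tau_\varepsilon\in I$ with $V_\Omega(\tau_\varepsilon)\to\sigma_m=(d_N/2-m)/d_N$ (the only value compatible with a limiting jump from $-d_N/2$ to $d_N/2$ of total mass $m$), so that $I_\Omega^*(V_\Omega(u_\varepsilon^{-1}(\lambda)))\to I_\Omega^*(\sigma_m)$, and we recover the leading term $c_0^{\widetilde{F}}I_\Omega^*(\sigma_m)/\varepsilon$.

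For the $o(1)$ correction, blow up around $\tau_\varepsilon$ by setting $s=(t-\tau_\varepsilon)/\varepsilon$ and $v_\varepsilon(s):=u_\varepsilon(\tau_\varepsilon+\varepsilon s)$, so that
\begin{equation*}
\widetilde{\mathbf{E}}_\varepsilon^1(u_\varepsilon)=\frac{1}{\varepsilon}\int_\mathbb{R}\bigl(|v_\varepsilon'|^2+\widetilde{f}(v_\varepsilon)\bigr)\,I_\Omega^*(V_\Omega(\tau_\varepsilon+\varepsilon s))\,ds.
\end{equation*}
Using $V_\Omega'=I_\Omega^*(V_\Omega)$ and assumption (\ref{taylor}), Taylor expansion gives
\begin{equation*}
I_\Omega^*(V_\Omega(\tau_\varepsilon+\varepsilon s))=I_\Omega^*(\sigma_m)+\varepsilon s\,(I_\Omega^*)'(\sigma_m)V_\Omega'(\tau_\varepsilon)+O(\varepsilon^{1+\beta}|s|^{1+\beta}).
\end{equation*}
The constant part, combined with the standard one-dimensional lower bound $\int(|v'|^2+\widetilde{f}(v))\,ds\geq c_0^{\widetilde{F}}$, reproduces the leading $c_0^{\widetilde{F}}I_\Omega^*(\sigma_m)/\varepsilon$. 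The linear part yields a moment integral that, after showing $v_\varepsilon\to z(\cdot-\tau_0)$ in $H^1_{\mathrm{loc}}$ via a concentration-compactness argument, limits, with the help of $(z')^2=\widetilde{f}(z)$ and the defining identities for $c_\mathrm{sym}$ and $\tau_0$, to the claimed correction $(c_\mathrm{sym}+c_0^{\widetilde{F}}\tau_0)(I_\Omega^*)'(\sigma_m)$. The remainder is $o(1)$ thanks to the effectively bounded support of the rescaled profile, guaranteed by $q\in(0,1)$ in (\ref{wide f}) which makes $z$ reach $\pm d_N/2$ in finite time.

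The main difficulty lies in the compactness and identification of the blow-up limit of $\{v_\varepsilon\}$: one must rule out multiple transition layers (each additional layer would contribute another full $c_0^{\widetilde{F}}I_\Omega^*(\sigma_m)/\varepsilon$, incompatible with the matching upper bound), establish $H^1_{\mathrm{loc}}$ convergence to a translate of the optimal profile $z$, and identify $\tau_0$ as the unique shift enforced by the first-order expansion of the mass constraint $\int_I u_\varepsilon I_\Omega^*(V_\Omega)\,dt=m$. The H\"older exponent $\beta$ in (\ref{taylor}) is precisely what absorbs the Taylor remainder of the weight into the $o(1)$ error; weaker regularity of $I_\Omega$ at $\sigma_m$ would leave the correction term ambiguous.
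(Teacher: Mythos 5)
You should first be aware that the paper does not prove this proposition at all: it is quoted verbatim as Theorem 4.17 of \cite{leoni2016second} and used as a black box (the only in-paper content surrounding it is the identity (\ref{ineq}) converting $(I_{\Omega}^{*})'(\sigma_m)$ into $(n-1)\kappa_0 I_{\Omega}(\sigma_m)$). So there is no in-paper argument to compare against; what you have written is a reconstruction of the Leoni--Murray proof, and as a roadmap it is faithful to their strategy: Modica--Mortola slicing for the $\varepsilon^{-1}$ term, then blow-up at the transition point, a $C^{1,\beta}_{\mathrm{loc}}$ Taylor expansion of the weight $I_{\Omega}^{*}(V_{\Omega})$, and the moment identity $\int s\left(|z'|^2+\widetilde f(z)\right)(s-\tau_0)\,ds=c_{\mathrm{sym}}+c_0^{\widetilde F}\tau_0$ for the $O(1)$ term, with $q<1$ supplying the compactly supported profile that controls the remainder.

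As a standalone proof, however, the proposal has two genuine gaps, both of which you name but neither of which you close. First, the reduction to monotone competitors ``by an equimeasurable rearrangement with respect to the weight $I_{\Omega}^{*}(V_{\Omega})$'' is not a free step: a weighted P\'olya--Szeg\H{o} inequality on an interval is false for general weights, and its validity here rests on the special structure of $I_{\Omega}^{*}(V_{\Omega})$ as the cross-sectional area of the symmetrized body $\Omega^{*}$; this must either be proved or bypassed by running the co-area/level-set-counting form of the Modica--Mortola bound on arbitrary $H^1$ competitors. Second, and more seriously, the entire $O(1)$ correction hinges on the compactness statement $v_\varepsilon\to z(\cdot-\tau_0)$ in $H^1_{\mathrm{loc}}$ together with localization of the transition point to $O(\varepsilon)$ precision; the heuristic that an extra layer costs another $c_0^{\widetilde F}I_{\Omega}^{*}(\sigma_m)/\varepsilon$ only excludes extra layers at order $\varepsilon^{-1}$ and says nothing about $O(1)$ energy excess from partial oscillations, nor does it identify the shift $\tau_\varepsilon\to\tau_0$ --- and these are exactly the quantities that must be controlled to extract a correction that is itself only $O(1)$. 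Finally, a bookkeeping point: your own expansion produces the linear coefficient $(I_{\Omega}^{*})'(\sigma_m)V_{\Omega}'(\tau_\varepsilon)=(I_{\Omega}^{*})'(\sigma_m)I_{\Omega}^{*}(\sigma_m)+o(1)$, so the correction you actually derive carries an extra factor $I_{\Omega}^{*}(\sigma_m)$ relative to the displayed formula; you should reconcile this with the convention encoded in (\ref{ineq}) (where the paper sets $(I_{\Omega}^{*})'(\sigma_m)=(n-1)\kappa_0 I_{\Omega}(\sigma_m)$, i.e.\ in effect a $t$-derivative of the weight rather than a $\sigma$-derivative of $I_{\Omega}^{*}$) before asserting that the constants match.
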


From Proposition \ref{proposition i*} and Lemma 5.4 in \cite{leoni2016second}, we know
\begin{equation}
\begin{aligned}
I_{\Omega}^{*}(\sigma_m)&=I_{\Omega}^{}(\sigma_m), \\
 {(I_{\Omega}^{*})}^{\prime}(\sigma_m)&={(I_{\Omega}^{})}^{\prime}(\sigma_m)=(n-1)\kappa_0I_{\Omega}(\sigma_m).
\end{aligned}\label{ineq}
\end{equation}
\subsection{Lower bound estimates of \texorpdfstring{$\widetilde{\mathbf{E}}_\varepsilon$}{}}
The estimates of the upper bound is based on the P\'olya-Szeg{\H o} inequality and Proposition \ref{proposition inf}. 

\begin{proposition}
 Under the same assumptions of Theorem \ref{theorem 4 }, there exists $\varepsilon_0>0$ such that for $0<\varepsilon\leq \varepsilon_0$, we have
\begin{equation}
\widetilde{\mathbf{E}}_\varepsilon\geq\frac{c_0^{\widetilde{F}} I_\Omega({\sigma_m})}{\varepsilon}+\left(c_{\mathrm{sym}}+c_0^{\widetilde{F}} \tau_{0}\right)(n-1) \kappa_{0}I_\Omega({\sigma_m})+o(1), 
\label{3.03111}
\end{equation}
where $\kappa_0, c_{\mathrm{sym}}, \tau_{0}, c_0^{\widetilde{F}}$ are as defined in Theorem \ref{theorem 4 }.
\label{fine lower 11}
\end{proposition}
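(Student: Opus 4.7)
The plan is to reduce the vector-valued variational problem on $\Omega$ to the one-dimensional scalar problem on $I$ in two steps, and then invoke the already-established lower bound of Proposition \ref{proposition inf}.

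\textbf{Step 1 (scalarization via $d_0$).} Take any admissible $u \in H^1(\Omega,\mathbb{R}^k)$ with $\int_\Omega d_0(u,N)\,dx = m$, and set $w(x):= d_0(u(x),N)$. By Lemma \ref{Lemma 5.1}, $d_0(\cdot,N)$ is $1$-Lipschitz with $|\nabla d_0(\cdot,N)|\le 1$ a.e., so the chain rule for Lipschitz composition yields $w \in H^1(\Omega,[-\tfrac{d_N}{2},\tfrac{d_N}{2}])$ with $|\nabla w(x)| \le |\nabla u(x)|$ a.e. in $\Omega$. Moreover, by the definition $\widetilde F(p) = \widetilde f(d_0(p,N))$, we have $\widetilde F(u) = \widetilde f(w)$ pointwise. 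Therefore
\begin{equation*}
\widetilde{\mathbf{E}}_\varepsilon(u) \;\ge\; \int_\Omega \left(|\nabla w|^2 + \frac{1}{\varepsilon^2}\widetilde f(w)\right) dx,
\end{equation*}
while the mass constraint becomes $\int_\Omega w\,dx = m$.

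\textbf{Step 2 (one-dimensional rearrangement).} Apply the P\'olya-Szeg\H o type inequality (Proposition \ref{polya}) to the scalar $w$ to obtain
\begin{equation*}
\int_\Omega \left(|\nabla w|^2 + \frac{1}{\varepsilon^2}\widetilde f(w)\right) dx \;\ge\; \int_I \left((P_w')^2 + \frac{1}{\varepsilon^2}\widetilde f(P_w)\right) I_\Omega^*(V_\Omega)\,dt,
\end{equation*}
and the preserved constraint reads $\int_I P_w\, I_\Omega^*(V_\Omega)\,dt = \int_\Omega w\,dx = m$. A brief check shows this is an admissible value for the one-dimensional problem: since $dV_\Omega/dt = I_\Omega^*(V_\Omega)$ with $V_\Omega(\pm T) = \{0,1\}$, one has $\int_I I_\Omega^*(V_\Omega)\,dt = 1$, so $m \in(-\tfrac{d_N}{2},\tfrac{d_N}{2})$ lies in the interval required by \eqref{1.99}. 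In addition, $P_w$ takes values in $[-\tfrac{d_N}{2},\tfrac{d_N}{2}]$, where $\widetilde f$ is the relevant potential.

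\textbf{Step 3 (invoke the one-dimensional lower bound).} The previous two steps give $\widetilde{\mathbf{E}}_\varepsilon(u) \ge \widetilde{\mathbf{E}}_\varepsilon^{1}$ for every admissible $u$, hence $\widetilde{\mathbf{E}}_\varepsilon \ge \widetilde{\mathbf{E}}_\varepsilon^{1}$. By Proposition \ref{proposition inf},
\begin{equation*}
\widetilde{\mathbf{E}}_\varepsilon^{1} \;\ge\; \frac{c_0^{\widetilde F}\, I_\Omega^*(\sigma_m)}{\varepsilon} + \bigl(c_{\mathrm{sym}} + c_0^{\widetilde F}\tau_0\bigr)(I_\Omega^*)'(\sigma_m) + o(1).
\end{equation*}
Finally substitute the identifications \eqref{ineq}, namely $I_\Omega^*(\sigma_m) = I_\Omega(\sigma_m)$ and $(I_\Omega^*)'(\sigma_m) = (n-1)\kappa_0 I_\Omega(\sigma_m)$, to obtain the claimed inequality \eqref{3.03111}.

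\textbf{Main obstacle.} The step requiring care is Step 1: one must justify that composing the vector-valued Sobolev map $u$ with the Lipschitz function $d_0(\cdot,N)$ preserves $H^1$ regularity and yields the pointwise gradient domination $|\nabla w|\le|\nabla u|$ a.e. (this follows from the $1$-Lipschitz property in Lemma \ref{Lemma 5.1} together with the standard chain rule for Lipschitz$\circ$Sobolev compositions). Once the scalarization is in place, Steps 2 and 3 are a direct assembly of Propositions \ref{polya} and \ref{proposition inf} together with \eqref{ineq}, and no additional analysis is required.
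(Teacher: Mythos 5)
Your proposal is correct and follows essentially the same route as the paper's proof: scalarize via $w=d_0(u,N)$ using the $1$-Lipschitz property from Lemma \ref{Lemma 5.1}, reduce to one dimension with Proposition \ref{polya}, and conclude from Proposition \ref{proposition inf} together with the identifications \eqref{ineq}. Your added check that $\int_I I_\Omega^*(V_\Omega)\,dt=1$ (so $m$ is admissible for \eqref{1.99}) is a small but welcome detail the paper leaves implicit.
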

\begin{proof}
Set $\{u_\varepsilon\}\subset H^ 1(\Omega)$, satisfying $\int_\Omega d_0(u_\varepsilon, N)dx=m$ and define 
\begin{equation}
 v_\varepsilon= d_0(u_\varepsilon, N).
 \end{equation}
Since $|\nabla d_0(q, N)|\leq1$ a.e. on $\mathbb{R}^k$ from Lemma \ref{Lemma 5.1} and $u_\varepsilon\in H^1(\Omega)$, we have $v_\varepsilon\in H^1(\Omega)$ and 
 \begin{equation}
 |\nabla u_\varepsilon(x)|\geq|\nabla v_\varepsilon(x)|\quad \text{ a.e. } x\in\Omega.
 \end{equation}
Then, we use $\widetilde{F}(u_\varepsilon)=\widetilde{f}(v_\varepsilon)$ and Proposition \ref{polya}, we have
 \begin{equation}
 \begin{aligned}
 & \int_{\Omega}\left(|\nabla u_\varepsilon|^{2}+\frac{1}{\varepsilon^{2}} \widetilde{F}(u_\varepsilon)\right) d x\\
 \geq & \int_{\Omega}\left(|\nabla v_\varepsilon|^{2}+\frac{1}{\varepsilon^{2}} \widetilde{f}\left(v_\varepsilon\right)\right) d x\\
 \geq & \int_{I}\left(\widetilde{f}\left(P_{v_\varepsilon}\right)+\varepsilon^{2}\left(P_{v_\varepsilon}^{\prime}\right)^{2}\right) I_{\Omega}^{*}\left(V_{\Omega}\right) dt,
 \end{aligned}
 \end{equation}

with
\begin{equation}
\int_{\Omega} v_\varepsilon d x=\int_{I} P_{ v_\varepsilon} I_{\Omega}^{*}\left(V_{\Omega}\right) d t.
\end{equation}

Thus, by the definition of $\widetilde{\mathbf{E}}_\varepsilon^1 $, we obtain
\begin{equation}
\widetilde{\mathbf{E}}_\varepsilon\geq\widetilde{\mathbf{E}}_\varepsilon^1.
\end{equation}

This, combined with (\ref{3.031}) and (\ref{ineq}), implies (\ref{3.03111}). The proof is complete.
\end{proof}

\subsection{Upper bound estimates of \texorpdfstring{$\widetilde{\mathbf{E}}_\varepsilon$}{}}
The upper bound estimate are based on the constructions, similar to that in (Theorem 4.7, \cite{leoni2016second}).
\begin{proposition}
 Under the same assumptions of Theorem \ref{theorem 4 }, there exists $\varepsilon_0>0$ such that for $0<\varepsilon\leq \varepsilon_0$, we have
\begin{equation}
\mathbf{E}_{\varepsilon} ^0\leq \frac{c_0^{\widetilde{F}} I_\Omega({\sigma_m})}{\varepsilon}+\left(c_{\mathrm{sym}}+c_0^{\widetilde{F}} \tau_{0}\right)(n-1) \kappa_{0}I_\Omega({\sigma_m})+o(1), 
\label{3.031}
\end{equation}
where $\kappa_0, c_{\mathrm{sym}}, \tau_{0}, c_0^{\widetilde{F}}$ are defined same as in Theorem \ref{theorem 4 }.
\label{fine upper 1}
\end{proposition}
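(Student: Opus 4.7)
The plan is to construct an explicit competitor that attains the claimed upper bound, in the spirit of Proposition \ref{fine upper} but now keeping the subleading term tracked because the mean curvature $\kappa_0$ of $\Gamma$ need not vanish. Let $\Omega^+$ realize the isoperimetric minimum $I_\Omega(\sigma_m)$ and set $\Gamma = \partial \Omega^+ \cap \Omega$; under $n\leq 7$ the results cited in the proof of Proposition \ref{fine upper} give that $\Gamma \in C^{2,\alpha}$ and has constant mean curvature $\kappa_0$, so the signed distance $d_\Gamma$ is $C^{2,\alpha}$ in a neighborhood of $\Gamma$ and $S(t) = H^{n-1}(\{d_\Gamma = t\})$ admits the expansion $S(t) = S(0) + S'(0)\,t + O(t^2)$ with $S(0) = I_\Omega(\sigma_m)$ and $S'(0) = (n-1)\kappa_0\, I_\Omega(\sigma_m)$. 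Fixing $(p^+,p^-) \in M^+\times M^-$ with $|p^+-p^-| = d_N$ and the optimal profile $z$ from (\ref{z}), I would define
\begin{equation*}
v_\varepsilon(x) = \frac{p^+ + p^-}{2} + z\!\left(\frac{d_\Gamma(x)}{\varepsilon} - \tau_\varepsilon\right)\frac{p^+ - p^-}{|p^+ - p^-|},
\end{equation*}
with a translation parameter $\tau_\varepsilon$ to be chosen. Because $z$ is constant equal to $\pm d_N/2$ outside the bounded window $[t_1,t_2]$, the map $v_\varepsilon$ equals $p^\pm$ off a thin tubular neighborhood of $\Gamma$ of width $O(\varepsilon)$, so $v_\varepsilon \in H^1(\Omega,\mathbb R^k)$ for all sufficiently small $\varepsilon$. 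Along the straight segment joining $p^-$ to $p^+$ the nearest-point projection onto $N^\pm$ is simply $p^\pm$, hence $d_0(v_\varepsilon,N) = z(d_\Gamma/\varepsilon - \tau_\varepsilon)$ pointwise.

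The choice of $\tau_\varepsilon$ enforces the mass constraint. By the coarea formula, the change of variables $s = t/\varepsilon - \tau_\varepsilon$, and the Taylor expansion of $S$,
\begin{equation*}
\int_\Omega d_0(v_\varepsilon,N)\, dx = \int_{\mathbb R} z(s)\, S\!\big(\varepsilon(s+\tau_\varepsilon)\big)\,\varepsilon\, ds,
\end{equation*}
while the target mass satisfies $m = \int_\Omega d_0(\chi_{\Omega^+}p^+ + \chi_{\Omega^-}p^-, N)\, dx + (\text{a specific first-order correction in } \varepsilon)$. Subtracting, the mass identity reduces to an equation of the form
\begin{equation*}
S'(0)\,\varepsilon \int_{\mathbb R}\bigl[z(s) - \operatorname{sgn}_{\pm d_N/2}(s+\tau_\varepsilon)\bigr]\,(s+\tau_\varepsilon)\, ds + S(0)\,\varepsilon\!\int_{\mathbb R}\bigl[z(s-\tau_\varepsilon)-\operatorname{sgn}_{\pm d_N/2}(s)\bigr]\, ds + O(\varepsilon^2) = 0,
\end{equation*}
whose leading $\varepsilon$-coefficient is a strictly monotone function of $\tau_\varepsilon$ (the integral of $z-\operatorname{sgn}_{\pm d_N/2}$ is strictly monotone in the shift). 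An implicit-function or monotonicity argument then yields a unique bounded solution $\tau_\varepsilon$ with $\tau_\varepsilon \to \tau_0$, where $\tau_0$ is precisely the constant from the theorem statement.

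For the energy, I use $z'(s)^2 = \widetilde f(z(s))$ and $|\nabla d_\Gamma| = 1$ on the transition layer to get
\begin{equation*}
\widetilde{\mathbf E}_\varepsilon(v_\varepsilon) = \frac{2}{\varepsilon^2}\int_\Omega \widetilde f\!\left(z\!\left(\tfrac{d_\Gamma}{\varepsilon}-\tau_\varepsilon\right)\right) dx = \frac{2}{\varepsilon}\int_{\mathbb R}\widetilde f(z(s))\, S\!\big(\varepsilon(s+\tau_\varepsilon)\big)\, ds.
\end{equation*}
Expanding $S$ to first order,
\begin{equation*}
\widetilde{\mathbf E}_\varepsilon(v_\varepsilon) = \frac{2S(0)}{\varepsilon}\!\int \widetilde f(z)\, ds + 2 S'(0)\!\int \widetilde f(z(s))(s+\tau_\varepsilon)\, ds + o(1),
\end{equation*}
and invoking the two identities $2\!\int \widetilde f(z)\, ds = c_0^{\widetilde F}$ (by the change of variables $\lambda = z(s)$) and $2\!\int \widetilde f(z(s))\,s\, ds = c_{\mathrm{sym}}$, together with $\tau_\varepsilon = \tau_0 + o(1)$, produces the stated upper bound $\varepsilon^{-1}c_0^{\widetilde F} I_\Omega(\sigma_m) + (c_{\mathrm{sym}} + c_0^{\widetilde F}\tau_0)(n-1)\kappa_0 I_\Omega(\sigma_m) + o(1)$.

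The main obstacle is the mass-constraint step: the leading $O(\varepsilon^{-1})$ and $O(1)$ terms in the energy are locked in by the profile $z$ and the geometry of $\Gamma$, but the $O(1)$ term carries $\tau_\varepsilon$ linearly, so one must pin $\tau_\varepsilon$ to converge to the specific value $\tau_0$ defined by $\int[z(s-\tau_0)-\operatorname{sgn}_{\pm d_N/2}(s)]\,ds = 0$. Making this rigorous requires controlling the $O(\varepsilon^2)$ remainder in the mass expansion uniformly in $\tau_\varepsilon$ over a bounded range, and then applying continuity to deduce the existence of such a $\tau_\varepsilon$ with the desired asymptotics.
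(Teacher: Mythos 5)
Your construction is exactly the paper's: the same competitor $v_\varepsilon$ built from the optimal profile $z$ shifted by $\tau_\varepsilon$ along the signed distance to the isoperimetric minimizer, with the paper delegating the determination of $\tau_\varepsilon\to\tau_0$ and the first-order expansion of $S(t)$ to Step 2 of Theorem 4.7 in \cite{leoni2016second} --- precisely the computation you carry out explicitly. The argument is correct; the only cosmetic point is that your intermediate formula $\int_{\mathbb R} z(s)\,S(\varepsilon(s+\tau_\varepsilon))\,\varepsilon\, ds$ is not literally convergent and must be read, as you in fact do in the next display, with the step function $\operatorname{sgn}_{\pm d_N/2}$ subtracted off.
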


\begin{proof}
 Define the minimizer $E$ of (\ref{minimizers}) at $\sigma_m$ and set
\begin{equation}
 \Gamma=\overline{\partial E\cap\Omega}.
\end{equation}
From \cite{gruter1987boundary} we know that $ \Gamma$ is a $C^{2, \alpha}$ surface, that intersects $\partial\Omega$ orthogonally. Define 
\begin{equation}
 \eta(t)= H^{n-1}\{x\in\Omega|{d}_\Gamma(x)=t\}.
\end{equation}
For above $\eta$, we can assume, for some $T>0$, 
\begin{equation}
 \eta(t)=0 \text{ for } |t|\geq T.
\end{equation}
Define $ I= [-T, T]$ and let $(p^+, p ^-)\in M^+ \times M^- $ such that $|p^+-p ^-| =d_N$. Now we construct comparison maps $v_\varepsilon\in H^1:\Omega\to \mathbb{R}^k$ 
\begin{equation*}
v_{\varepsilon}(x)=\frac{p_{}^{+}+p_{}^{-}}{2}+z_{\varepsilon}({d}_\Gamma(x)) \frac{p_{}^{+}-p_{}^{-}}{\left|p_{}^{+}-p_{}^{-}\right|}, \quad x \in \Omega, 
\end{equation*}
where
$$z_{\varepsilon}(t):=z\left(\frac{t}{\varepsilon}-\tau_\varepsilon\right), $$
where $\tau_\varepsilon$ is selected to satisfy (\ref{1.12}) and the function $z$ in (\ref{z}) satisfies $z(t) \equiv \frac{d_{N}}{2}$ for $t \geq t_{2}$ and $z(t) \equiv -\frac{d_{N}}{2}$ for $t \leq t_{1}$.
From the construction, the mass constraint is 
\begin{equation}
 m=\int_{\Omega}d_0(v_\varepsilon, N)dx=
 \int_\Omega z_{\varepsilon}({d}_\Gamma(x))dx.
 \label{up1}
\end{equation}
For the energy, we compute 
\begin{equation}
\int_{\Omega}\left(|\nabla v_\varepsilon|^{2}+\frac{1}{\varepsilon^{2}} F(v_\varepsilon)\right) d x
 =\int_\Omega\left(|z_{\varepsilon}^{\prime}({d}_\Gamma(x))|^2+\frac{1}{\varepsilon^{2}} \widetilde{f}(z_{\varepsilon}({d}_\Gamma(x)))\right)dx.
\label{up7}
\end{equation}

Using the same argument as in (\cite{leoni2016second}, Theorem 4.7, Step 2) to complete the proof.
\end{proof}
\begin{proof}[Proof of Theorem 4]
  Combing the lower and upper bound estimates from Proposition \ref{fine lower 11} and Proposition \ref{fine upper 1}, we obtain Theorem \ref{theorem 4 }.
\end{proof}
\printbibliography
\end{document}